\gdef\@ntitle{\@title}
\def\subtitle#1{\gdef\@subtitle{#1}}
\def\@subtitle{}
\def\adress#1{\gdef\@adress{#1}}
\def\@adress{}
\def\preprint#1{\gdef\@preprint{#1}}
\def\@preprint{}
\def\keywords#1{\gdef\@keywords{#1}}
\def\@keywords{}
\def\email#1{\gdef\@email{#1}}
\def\@email{}
\def\refname{References}
\newlength{\myparskip}
\newlength{\myproofparskip}
\def\href#1#2{#2}
\def\kohyp{
  \usepackage{hyperref}
  \hypersetup{
    linktocpage = true,
    pdftitle = {\@title},
    pdfauthor = {\@author},
    pdfkeywords = {\@keywords},    
    bookmarksopen = true,
    bookmarksopenlevel = 1
  }}  
\def\showkeywords{\begin{flushleft}\footnotesize\textbf{Keywords}: \@keywords.\end{flushleft}}
\newcounter{mythm}[subsection]
\def\setsecnumdepth#1{
  \setcounter{secnumdepth}{#1}
  \setcounter{mythm}{0}
  \ifnum \c@secnumdepth >0
    \ifnum \c@secnumdepth >1
      \def\themythm{\thesubsection.\arabic{mythm}}
      \numberwithin{equation}{subsection}
      \renewcommand\theequation{\thesubsection.\arabic{equation}}
    \else
      \def\themythm{\thesection.\arabic{mythm}}
      \numberwithin{equation}{section}
      \renewcommand\theequation{\thesection.\arabic{equation}}
    \fi
  \else
    \def\themythm{\arabic{mythm}}
  \fi}
\newtheorem{theorem}[mythm]{Theorem}
\newtheorem{definition}[mythm]{Definition}
\newtheorem{rem}[mythm]{Remark}
\newtheorem{corollary}[mythm]{Corollary}
\newtheorem{exa}[mythm]{Example}
\newtheorem{proposition}[mythm]{Proposition}
\newtheorem{lemma}[mythm]{Lemma}
\newenvironment{remark}{\begin{rem}\normalfont\setlength{\parskip}{\myproofparskip}}{\setlength{\parskip}{\myparskip}\end{rem}}
\newenvironment{example}{\begin{exa}\normalfont\setlength{\parskip}{\myproofparskip}}{\setlength{\parskip}{\myparskip}\end{exa}}
\renewenvironment{proof}{Proof.\setlength{\parskip}{\myproofparskip}}{\hfill{$\square$}\\\setlength{\parskip}{\myparskip}}
\def\Z {\mathbb{Z}}
\def\R {\mathbb{R}}
\def\im{\mathrm{i}}
\def\id{\mathrm{id}}
\def\trivlin{\mathbf{I}}
\def\quot#1{``#1''}
\renewcommand{\varepsilon}{\epsilon}
\renewcommand{\emph}[1]{\def\reserved@a{it}\ifx\f@shape\reserved@a\uline{#1}\else\textit{#1}\fi}
\newcommand\erf[1]{(\ref{#1})}
\newlength{\myl}
\newcommand\sheaf[1]{\unitlength 0.1mm
  \settowidth{\myl}{$#1$}
  \addtolength{\myl}{-0.8mm}
  \begin{picture}(0,0)(0,0)
  \put(3,6){\text{\uline{\hspace{\myl}}}}
  \end{picture}#1\hspace{-0.15mm}}
\newcommand{\ueinssheaf}{\sheaf\ueins}
\newcommand{\ueins}{{\mathrm{U}}(1)}
\newcommand{\su}[1]{{\mathrm{SU}}(#1)}
\newcommand{\so}[1]{{\mathrm{SO}}(#1)}
\def\bun#1#2{\buntech#1{}(#2)}
\def\tocsection#1{\section*{#1}\addcontentsline{toc}{section}{#1}}
\def\mytitle{}
\def\zmptitle{
  \begin{tabular}{cc}
    \begin{minipage}[c]{0.4\textwidth}
      \begin{flushleft}
        \includegraphics[width=110pt]{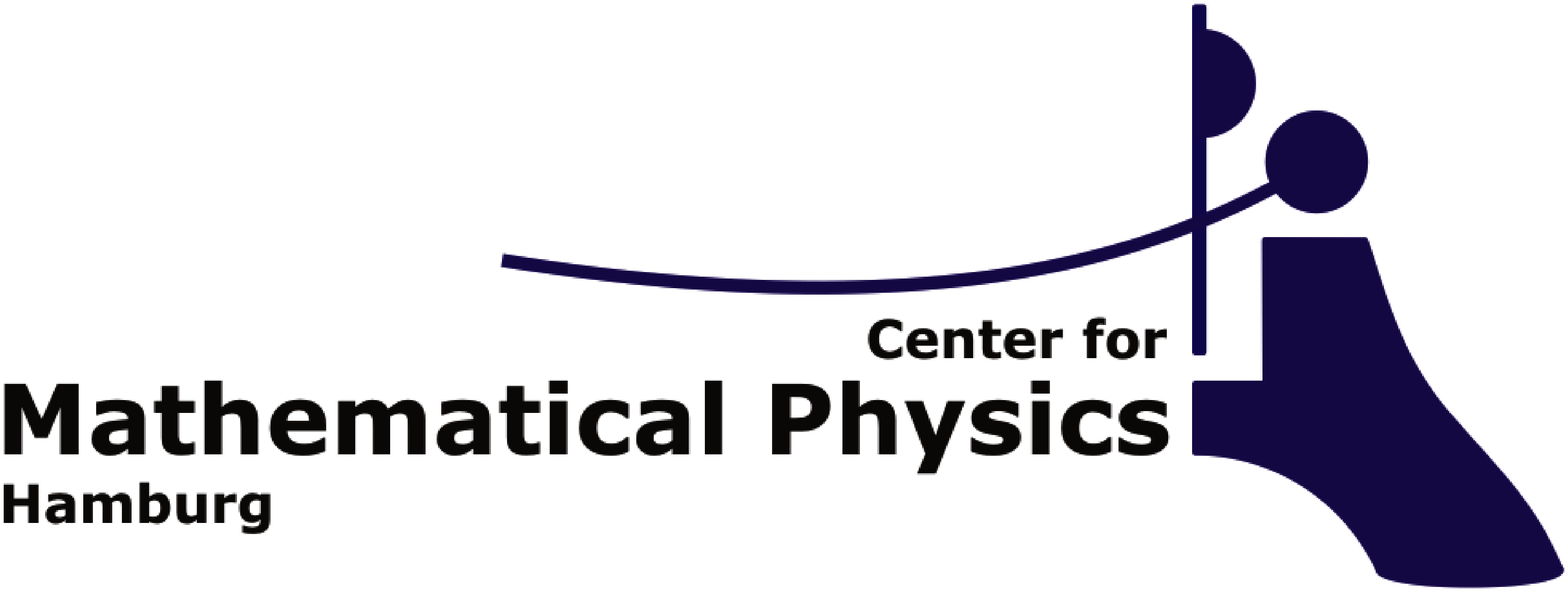}
      \end{flushleft}  
    \end{minipage}&
    \begin{minipage}[c]{0.55\textwidth}
      \begin{flushright}
      {\small\sf\@preprint}
      \end{flushright}
    \end{minipage}
  \end{tabular}
  \vskip 2cm}
\def\maketitle{
  \setlength{\parskip}{\myparskip}
  \newpage
  \noindent
  \mytitle
  \begin{center}
    \LARGE\@ntitle\\
    \if!\@subtitle!\else \smallskip\LARGE\@subtitle\\\fi
    \bigskip
    \if!\@author!\else   \bigskip\large\@author\\\fi
    \if!\@adress!\else   \bigskip\normalsize\@adress\\\fi
    \if!\@email!\else    \bigskip\normalsize\textit{\@email}\\\fi
  \end{center}
  \vskip 2cm\thispagestyle{empty}}
\def\etalchar#1{$^{#1}$}
\def\kobib#1{

  \addcontentsline{toc}{section}{\refname}

}
\def\showcomments{ -- Comments suppressed}
\newif\if@fewtab\@fewtabtrue{
  \count255=\time\divide\count255 by 60
  \xdef\hourmin{\number\count255}
  \multiply\count255 by-60\advance\count255 by\time
  \xdef\hourmin{\hourmin:\ifnum\count255<10 0\fi\the\count255}}
\def\ps@draft{
  \let\@mkboth\@gobbletwo
  \def\@oddfoot{
    \hbox to 7 cm{\tiny \versionno\hfil}
    \hskip -7cm\hfil\rm\thepage\hfil{\tiny\draftdate}}
  \def\@oddhead{}
  \def\@evenhead{}
  \let\@evenfoot\@oddfoot}
\def\draftdate{\number\month/\number\day/\number\year\ \ \ \hourmin }
\newcommand\version[1]{
  \typeout{}\typeout{#1}\typeout{}
  \vskip-1.7cm \centerline{\fbox{{\normalsize\tt DRAFT -- #1 -- 
  \draftdate\showcomments}}} \vskip0.92cm}
\def\draft#1{
  \def\versionno{#1}
  \pagestyle{draft}\thispagestyle{draft}
  \gdef\@ntitle{\version\versionno \@title}
  \global\def\draftcontrol{1}}
\global\def\draftcontrol{0}
\newcommand{\alxy}[1]{\begin{aligned}\xymatrix{#1}\end{aligned}}
\newcommand{\alxydim}[2]{\begin{aligned}\xymatrix#1{#2}\end{aligned}}
\renewcommand{\to}{\!\xymatrix@C=0.5cm{\ar[r] &}}
\renewcommand{\mapsto}{\!\xymatrix@C=0.5cm{\ar@{|->}[r] &}\!}
\renewcommand{\Rightarrow}{\!\xymatrix@C=0.5cm{\ar@{=>}[r] &}\!}
\newcommand{\incl}{\!\xymatrix@C=0.5cm{\ar@{^(->}[r] &}\!}
\renewcommand{\hookrightarrow}{\incl}
\renewcommand\Leftrightarrow{\!\xymatrix@C=0.5cm{\ar@{<=>}[r] &}\!}
\def\quot#1{``#1''}
\renewcommand\bun{\mathcal{B}\hspace{-0.2mm}un}
\newcommand{\pg}[1]{\paragraph{#1}}
\title{Multiplicative Bundle Gerbes with Connection}
\author{Konrad Waldorf}
\preprint{Hamb. Beitr. Math. Nr. 308\\ZMP-HH/08-07}
\keywords{bundle gerbe, Lie group, multiplicative, classifying space, Chern-Simons theory, loop group extension, bi-brane}
\def\mytitle{\zmptitle}
\begin{document}

\maketitle

\begin{abstract}
Multiplicative bundle gerbes are  gerbes over a Lie group which are compatible with the group structure. In this article connections on such bundle gerbes are introduced and studied.  It is shown that multiplicative bundle gerbes with connection furnish geometrical constructions of the following objects: smooth central extensions of  loop groups,  Chern-Simons actions for arbitrary gauge groups, and  symmetric bi-branes for  WZW models with topological defect lines. 

\end{abstract}

\tocsection{Introduction}

Over every smooth manifold one finds the following sequence of geometrical objects: smooth $\ueins$-valued functions, principal $\ueins$-bundles, bundle gerbes, bundle 2-gerbes, and so on.  If the manifold is a Lie group $G$, it is interesting to consider subclasses of these objects which are compatible with the group structure. Clearly, a smooth function $f: G \to \ueins$ is compatible with the group structure if it is a group homomorphism. We may write this as
\begin{equation*}
 p_1^{*}f \cdot p_2^{*}f = m^{*}f 
\end{equation*}
with $m: G \times G \to G$ the  multiplication and  $p_i: G \times G \to G$  the   projections. More interestingly,  a  principal $\ueins$-bundle $P$ over $G$ is compatible with the group structure, if it is equipped
with a bundle isomorphism
\begin{equation*}
\phi: p_1^{*}P \otimes p_2^{*}P \to m^{*}P
\end{equation*}
over $G \times G$, generalizing the  equation above. Additionally, the isomorphism $\phi$ has to satisfy  a coherence condition over $G \times G \times G$. Not every bundle $P$ admits such isomorphisms, and if it does, there may be different choices. Pairs $(P,\phi)$ are called multiplicative $\ueins$-bundles and are, indeed, an interesting concept: Grothendieck has shown that multiplicative $\ueins$-bundles are the same as central extensions of $G$ by $\ueins$ \cite{grothendieck1}. 

There is a straightforward generalization to bundle gerbes. Basically, a multiplicative bundle gerbe is a bundle gerbe $\mathcal{G}$  over  $G$ together with an isomorphism
\begin{equation*}
\mathcal{M}: p_1^{*}\mathcal{G} \otimes p_2^{*}\mathcal{G} \to m^{*}\mathcal{G}
\end{equation*}
of bundle gerbes over $G\times G$ and several coherence conditions.  Compared to group homomorphisms and multiplicative $\ueins$-bundles, multiplicative bundle gerbes are particularly interesting. Namely, isomorphism classes of bundle gerbes over an arbitrary smooth manifold $M$ are classified by $H^3(M,\Z)$ \cite{murray2}. For an important class of Lie groups, namely compact, simple and simply-connected ones, this classifying group is canonically isomorphic to the integers. Thus, these Lie groups  carry a  canonical family $\mathcal{G}^k$ of bundle gerbes,  $k\in\Z$. It turns out that all these canonical bundle gerbes are multiplicative \cite{carey4}.

This article is concerned with subtleties that arise when one tries to equip a multiplicative bundle gerbe $\mathcal{G}$ with a connection. 
A priori, every bundle gerbe admits a connection, and it seems natural to demand that the isomorphism $\mathcal{M}$ has to be connection-preserving. However, this turns out to be too restrictive.   

In order to see this, let us consider  the canonical bundle gerbes $\mathcal{G}^k$. Every bundle gerbe $\mathcal{G}^k$ carries a canonical connection  characterized uniquely by fixing its curvature to be the closed 3-form
\begin{equation*}
H_k := \frac{k}{6}\left \langle  \theta \wedge [\theta\wedge\theta] \right \rangle\text{.}
\end{equation*}
Here, $\theta$ denotes the left-invariant Maurer-Cartan form on $G$, and $\left \langle -,-  \right \rangle$ is an invariant inner product on the Lie algebra of $G$ normalized such that $H_1$ represents $1\in \Z = H^3(G,\Z)$ in real cohomology. Now, none of the canonical bundle gerbes equipped with its canonical connection  admits a connection-preserving isomorphism $\mathcal{M}$ like above.

This is in fact easy to see: an isomorphism between two bundle gerbes with connection can only preserve the connections if the two curvatures coincide. In case of the isomorphism $\mathcal{M}$ and the curvature 3-forms $H_k$, this is not true: only the weaker identity
\begin{equation*}
p_1^{*}H_k + p_2^{*}H_k = m^{*}H_k + \mathrm{d}\rho
\end{equation*}
is satisfied, for some 2-form $\rho$ on $G \times G$.

In this article we introduce a new  definition of a connection on a multiplicative bundle gerbe  (Definition \ref{def1}),  such  that the canonical bundle gerbes $\mathcal{G}^k$ provide examples. The idea is to include the 2-form $\rho$ into the  structure: $\mathcal{M}$ is no longer required to be a connection-preserving isomorphism, but only a weaker structure, an invertible  bimodule of curvature $\rho$. 

The details of this definition and some examples are the content of Section \ref{sec2}.
In Section \ref{sec3} we introduce a cohomology theory  which  classifies multiplicative bundle gerbes with connection. It can be seen as a slight modification of the simplicial Deligne cohomology of the classifying space $BG$ of $G$. 
By cohomological methods, we show for compact $G$ (Proposition \ref{prop1}):
\begin{enumerate}
\item[(a)]
Every multiplicative bundle gerbe over $G$ admits a connection.

\item[(b)]
Inequivalent choices of connections  are (up to isomorphism) parameterized by 2-forms $\Omega^2(G)$ modulo closed 2-forms which satisfy a certain simplicial integrality condition.
\end{enumerate}
If one keeps the curvature 3-form $H$ and the curvature 2-form $\rho$ fixed, we show  (Proposition \ref{prop3}) for arbitrary Lie groups $G$:
\begin{enumerate}
\item[(c)]
Isomorphism classes of multiplicative bundle gerbes with connection of  curvature $H$ and  2-form $\rho$ are parameterized by  $H^3(BG,\ueins)$. 
\end{enumerate}

In Section \ref{sec4} we describe three geometrical constructions, all starting from a multiplicative bundle gerbe with connection over a Lie group $G$.

1.)
The first construction yields a smooth, central extension of the loop group $LG$ (Theorem \ref{th3}). For this purpose we introduce a monoidal \quot{transgression} functor which takes bundle gerbes with connection over a smooth manifold $M$ to principal $\ueins$-bundles over $LM$. We show that whenever the bundle gerbe is a multiplicative bundle gerbe with connection, the transgressed $\ueins$-bundle is also multiplicative, so that Grothendieck's correspondence applies.
For the canonical bundle gerbe $\mathcal{G}^k$, we obtain the dual of the $k$-th power of the universal central extension of $LG$  (Corollary \ref{co2}).

2.) The second construction associates to every multiplicative bundle gerbe with connection over any Lie group $G$ and every principal $G$-bundle with connection $A$ over a smooth manifold $M$ a bundle 2-gerbe with connection over $M$ (Theorem \ref{th2}). Its holonomy -- evaluated on a  three-dimensional closed oriented manifold -- is (the exponential of) the  Chern-Simons action functional defined by the connection $A$ (Proposition \ref{prop2}). This \emph{identifies} Chern-Simons theories with gauge group $G$ with multiplicative bundle gerbes with connection over $G$. Our classification results  outlined above imply  a classification of Chern-Simons theories for arbitrary Lie groups (Proposition \ref{prop10}). Reduced to compact Lie groups, we reproduce results due to Dijkgraaf and Witten \cite{dijkgraaf1}.

3.) The third construction yields first examples of symmetric bi-branes. D-branes \cite{carey2} and bi-branes \cite{fuchs4} are additional structures for bundle gerbes with connection that extend  their  holonomy  from closed oriented surfaces to more general classes of surfaces, namely ones with boundary and  with defect lines, respectively. Given a  D-brane for a \emph{multiplicative} bundle gerbe with connection over a Lie group $G$, we construct a  bi-brane in the direct product $G \times G$ (Definition \ref{def4}). For applications in conformal field theory, so-called \emph{symmetric} D-branes and \emph{symmetric} bi-branes are particularly important. We show (Proposition \ref{prop5}) that our construction takes symmetric D-branes to symmetric bi-branes.

\pg{Acknowledgements.}

I thank Urs Schreiber, Christoph Schweigert and  Danny Stevenson for helpful discussions, and acknowledge  support from the  Collaborative Research Centre 676 \quot{Particles, Strings and the Early Universe}.

\section{Geometrical Definition}

\label{sec2}

We start with a brief review of bundle gerbes.
Let $M$ be a smooth manifold. A \emph{bundle gerbe} $\mathcal{G}$ over $M$ is a surjective submersion $\pi:Y \to M$ together with a principal $\ueins$-bundle $L$ over $Y^{[2]}$ and an associative isomorphism 
\begin{equation*}
\mu:\pi_{12}^{*}L \otimes \pi_{23}^{*}L \to \pi_{13}^{*}L
\end{equation*}
of  bundles over $Y^{[3]}$ \cite{murray}. Here we have denoted by $Y^{[k]}$ the $k$-fold fibre product of $Y$ over $M$, and by $\pi_{i_1...i_p}: Y^{[k]} \to Y^{[p]}$ the projections on those components that appear  in the index. 
A \emph{connection} on a bundle gerbe $\mathcal{G}$ is a 2-form $C \in \Omega^2(Y)$ -- called  \emph{curving} -- together with a  connection $\omega$ on  $L$ such that the isomorphism $\mu$ is connection-preserving and 
\begin{equation*}
\pi_2^{*}C - \pi_1^{*}C=\mathrm{curv}(\omega)\text{,}
\end{equation*}
where we identify the curvature of $\omega$ with a real-valued 2-form on the base space $Y^{[2]}$ of  $L$. The curvature of a connection on a bundle gerbe is the unique 3-form $H \in \Omega^3(M)$ such that $\pi^{*}H=\mathrm{d}C$.  For a more detailed introduction to bundle gerbes and their connections  the reader is referred to   recent reviews, e.g.  \cite{schweigert2,murray3} and references therein.

In this article,  all bundle gerbes   come with  connections. A class of trivial examples  is provided by 2-forms $\rho \in \Omega^2(M)$. Their surjective submersion is the identity $\id:M \to M$, so that $M^{[k]}$ is canonically diffeomorphic to $M$. The  bundle $L$ is the trivial $\ueins$-bundle $\trivlin_0$ equipped with the trivial  connection $\omega=0$, and the isomorphism $\mu$ is the identity. Finally, the curving $C$ is the given 2-form $\rho$. This bundle gerbe is called the \emph{trivial bundle gerbe} and denoted $\mathcal{I}_{\rho}$. Its curvature is $H=\mathrm{d}\rho$.

Bundle gerbes with connection form a strictly associative  2-category $\mathfrak{BGrb}(M)$. Most importantly, this means that there are  1-morphisms $\mathcal{A}:\mathcal{G} \to \mathcal{H}$ between two bundle gerbes  $\mathcal{G}$ and $\mathcal{H}$ with connection and   2-morphisms $\alpha:\mathcal{A} \Rightarrow \mathcal{A}'$ between those. Basically, the 1-morphisms are certain principal bundles with connection of a fixed curvature, defined over the fibre product of the surjective submersions of the two bundle gerbes. The 2-morphisms are connection-preserving bundle morphisms between those.
The  precise definitions can be found in \cite{waldorf1}; here we only need to recall some abstract properties. 

Like in every 2-category, a 1-morphism $\mathcal{A}:\mathcal{G} \to \mathcal{H}$ is called \emph{invertible} or 1-\emph{iso}morphism, if there exists another   1-morphism $\mathcal{A}^{-1}: \mathcal{H} \to \mathcal{G}$ in the opposite direction together with 2-isomorphisms $\mathcal{A} \circ \mathcal{A}^{-1} \cong \id_{\mathcal{H}}$ and $\mathcal{A}^{-1} \circ \mathcal{A} \cong \id_{\mathcal{G}}$. The 1-morphisms between $\mathcal{G}$ and $\mathcal{H}$ and all 2-morphisms between those form a category $\mathfrak{Hom}(\mathcal{G},\mathcal{H})$, and the restriction to 1-isomorphisms is a full subcategory $\mathfrak{Iso}(\mathcal{G},\mathcal{H})$. 

\begin{proposition}[\cite{waldorf1}, Sec. 3]
\label{prop4}
For 2-forms $\rho_1,\rho_2\in\Omega^2(M)$ there is a canonical equivalence \begin{equation*}
\bun: \mathfrak{Iso}(\mathcal{I}_{\rho_1},\mathcal{I}_{\rho_2}) \to \ueins\text{-}\mathfrak{Bun}^{\nabla}_{\rho_2 - \rho_1}(M)\text{,} 
\end{equation*}
between the isomorphisms between two trivial bundle gerbes and the category of $\ueins$-bundles over $M$ with connection of fixed curvature $\rho_2-\rho_1$.
\end{proposition}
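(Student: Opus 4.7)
The plan is to unpack the definition of the 2-category $\mathfrak{BGrb}(M)$ from \cite{waldorf1} when both source and target are trivial bundle gerbes, and then exhibit the $\ueins$-bundle as the descent of the bundle data of a 1-isomorphism. Recall that a 1-morphism $\mathcal{A}: \mathcal{G} \to \mathcal{H}$ consists of (i) a surjective submersion $\zeta: Z \to Y_{\mathcal{G}} \times_M Y_{\mathcal{H}}$, (ii) a principal $\ueins$-bundle $A$ over $Z$ with connection whose curvature is $\zeta^{*}(C_{\mathcal{H}} - C_{\mathcal{G}})$, and (iii) an isomorphism of $\ueins$-bundles over $Z \times_M Z$ relating $A$ via $L_{\mathcal{G}}$ and $L_{\mathcal{H}}$, compatible with $\mu_{\mathcal{G}}$ and $\mu_{\mathcal{H}}$ on the triple fibre product. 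For trivial bundle gerbes we have $Y_{\mathcal{I}_{\rho_i}} = M$, the bundles $L$ are trivial with zero connection, and $\mu$ is the identity. Thus $\zeta: Z \to M$ is just a surjective submersion, the compatibility isomorphism over $Z \times_M Z$ is an isomorphism between the two pullbacks of $A$, and the coherence condition on $Z \times_M Z \times_M Z$ reduces to the usual cocycle condition for descent.

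First, I would define the functor $\bun$ on objects: given a 1-isomorphism $\mathcal{A}$, the compatibility datum on $Z \times_M Z$ together with the triple-overlap coherence is precisely effective descent data for the $\ueins$-bundle $A$ along $\zeta$. Since $\zeta$ is a surjective submersion, this descends to a $\ueins$-bundle $\bun(\mathcal{A})$ on $M$ with connection. The curvature constraint $\mathrm{curv}(A) = \zeta^{*}(\rho_2 - \rho_1)$ descends to $\mathrm{curv}(\bun(\mathcal{A})) = \rho_2 - \rho_1$. On morphisms, a 2-morphism $\alpha: \mathcal{A} \Rightarrow \mathcal{A}'$ is by definition a connection-preserving bundle morphism between $A$ and $A'$ that intertwines the compatibility isomorphisms; this is precisely a morphism of descent data, hence induces a morphism $\bun(\alpha): \bun(\mathcal{A}) \to \bun(\mathcal{A}')$.

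Second, I would construct a quasi-inverse. Given a $\ueins$-bundle $P$ with connection over $M$ of curvature $\rho_2 - \rho_1$, take $Z := M$, $\zeta := \id_M$, $A := P$, and let the compatibility isomorphism over $M \times_M M = M$ be the identity; the coherence condition is trivial. This assembles into a 1-isomorphism $\mathcal{A}_P: \mathcal{I}_{\rho_1} \to \mathcal{I}_{\rho_2}$, and a morphism $P \to P'$ of $\ueins$-bundles over $M$ becomes a 2-morphism $\mathcal{A}_P \Rightarrow \mathcal{A}_{P'}$ tautologically. Finally, to check that the two functors are mutually quasi-inverse, one direction is immediate since descending along the identity returns the original bundle; for the other direction, the canonical map from $A$ to $\zeta^{*}\bun(\mathcal{A})$ coming from the universal property of descent is a 2-isomorphism in $\mathfrak{Iso}(\mathcal{I}_{\rho_1},\mathcal{I}_{\rho_2})$, natural in $\mathcal{A}$.

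The main obstacle is not conceptual but bookkeeping: one must verify that the coherence axiom for 1-morphisms in $\mathfrak{BGrb}(M)$, stated over $Z \times_{Y_{\mathcal{G}} \times_M Y_{\mathcal{H}}} Z \times_{Y_{\mathcal{G}} \times_M Y_{\mathcal{H}}} Z$ in the general setting, collapses to the ordinary $\ueins$-bundle cocycle condition on the triple fibre product of $Z$ over $M$ once $\mu_{\mathcal{G}}$ and $\mu_{\mathcal{H}}$ are set to the identity. Once this identification is in place, effectivity of descent for $\ueins$-bundles with connection provides the desired equivalence, and the construction is manifestly canonical because no choices are made beyond applying descent.
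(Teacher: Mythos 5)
Your argument is correct and is essentially the proof given in the cited reference (\cite{waldorf1}, Sec.\ 3), which the paper itself does not reproduce: for trivial gerbes the structure bundle and its isomorphism over $Z\times_M Z$ with the triple-overlap coherence are exactly effective descent data for a $\ueins$-bundle with connection along the surjective submersion $Z\to M$, and the curvature condition descends to $\rho_2-\rho_1$. The only point worth making explicit is that the comparison 2-isomorphism $\mathcal{A}\Rightarrow\mathcal{A}_{\bun(\mathcal{A})}$ lives over a common refinement of the two surjective submersions (here one can take $Z$ itself), which is how 2-morphisms with different submersions are compared in $\mathfrak{BGrb}(M)$.
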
  

This equivalence is useful in the following situation. A 1-isomorphism $\mathcal{T}:\mathcal{G}\to \mathcal{I}_{\rho}$ is called \emph{trivialization} of $\mathcal{G}$. If   $\mathcal{T}_1:\mathcal{G} \to \mathcal{I}_{\rho_1}$ and $\mathcal{T}_2:\mathcal{G} \to \mathcal{I}_{\rho_2}$ are two trivializations of the same bundle gerbe, one obtains a principal $\ueins$-bundle $\bun(\mathcal{T}_2 \circ \mathcal{T}_1^{-1})$ with connection of curvature $\rho_2-\rho_1$. Thus, two trivializations \quot{differ} by a $\ueins$-bundle.

The 2-category $\mathfrak{BGrb}(M)$ of bundle gerbes with connection over $M$ has two important additional structures: pullbacks and tensor products \cite{waldorf1}. The tensor unit is the trivial bundle gerbe $\mathcal{I}_0$. Let us make the following observation:  a flat $\ueins$-bundle $L$  over $M$ corresponds to a 1-isomorphism $L:\mathcal{I}_0 \to \mathcal{I}_0$ under the equivalence of Proposition \ref{prop4}. Its tensor product with some 1-isomorphism  $\mathcal{A}:\mathcal{G} \to \mathcal{H}$ yields a new 1-isomorphism $L \otimes \mathcal{A}:\mathcal{G} \to \mathcal{H}$. This defines a functor
\begin{equation}
\label{36}
\otimes : \ueins\text{-}\mathfrak{Bun}^{\nabla}_0(M) \times \mathfrak{Iso}(\mathcal{G},\mathcal{H}) \to \mathfrak{Iso}(\mathcal{G},\mathcal{H}) \text{.}
\end{equation}
It exhibits the category $\mathfrak{Iso}(\mathcal{G},\mathcal{H})$ as a module  over the monoidal category of flat $\ueins$-bundles. Moreover, this action of flat  bundles  on the isomorphisms between two bundle gerbes $\mathcal{G}$ and $\mathcal{H}$ is \quot{free and transitive} in the sense that the induced functor
\begin{equation}
\label{38}
\ueins\text{-}\mathfrak{Bun}^{\nabla}_0(M) \times \mathfrak{Iso}(\mathcal{G},\mathcal{H}) \to \mathfrak{Iso}(\mathcal{G},\mathcal{H}) \times \mathfrak{Iso}(\mathcal{G},\mathcal{H})
\end{equation}
which sends the pair $(L,\mathcal{A})$ to  $(L \otimes \mathcal{A},\mathcal{A})$, is an equivalence of categories, see \cite{schreiber1}, Lemma 2.

Particular 1-morphisms are bimodules  \cite{fuchs4}. If $\mathcal{G}$ and $\mathcal{H}$ are bundle gerbes with connection over $M$, a $\mathcal{G}$-$\mathcal{H}$-\emph{bimodule} is a 1-morphism
\begin{equation*}
\mathcal{A}: \mathcal{G} \to \mathcal{H} \otimes \mathcal{I}_{\rho}\text{.}
\end{equation*}
The 2-form $\rho$ is called the \emph{curvature} of the bimodule.
A bimodule is called \emph{invertible}, if the 1-morphism $\mathcal{A}$ is invertible. A \emph{bimodule morphism} is just a 2-morphism between the respective 1-morphisms.

\begin{remark}
\label{re2}
The set of bundle gerbes with connection subject  to the equivalence relation $\mathcal{G} \sim \mathcal{H}$ if  there exists an invertible $\mathcal{G}$-$\mathcal{H}$-bimodule, is in bijective correspondence with the set of isomorphism classes of bundle gerbes \emph{without connection}. 
\end{remark}

Let $G$ be a Lie group. There is a  family of smooth maps $G^p \to G^r$  that multiply some of the factors, drop some and leave others untouched. To label these maps we indicate the  prescription by indices. For example:
\begin{equation*}
m_{12,3,46,7}(g_1,g_2,g_3,g_4,g_5,g_6,g_7) := (g_1g_2,g_3,g_4g_6,g_7)\text{.}
\end{equation*}
Particular cases are the multiplication $m_{12}:G^2 \to G$ and the projections $m_{k}:G^p \to G$ to the $k$-th factor. Furthermore, we denote the pullback of some geometric object $X$ along one of the maps $g_I: G^p \to G^r$ by $X_I$.

An $n$-form $\rho\in\Omega^n(G^2)$ will be called \emph{$\Delta$-closed}, if 
\begin{equation}
\label{8}
\rho_{2,3} + \rho_{1,23} = \rho_{1,2} + \rho_{12,3}
\end{equation}
as $n$-forms over $G^3$. In this case we denote the  $n$-form (\ref{8}) by $\rho_{\Delta}$. 
To inure the reader to the notation,  this means
\begin{equation*}
\rho_{\Delta} := m_{2,3}^{*}\rho + m_{1,23}^{*}\rho = m_{1,2}^{*}\rho + m_{12,3}^{*}\rho\text{,}
\end{equation*}
or, at a point $(g_1,g_2,g_3)\in G^3$,
\begin{equation*}
(\rho_{\Delta})_{g_1,g_2,g_3} = \rho_{g_2,g_3} +\rho_{g_1,g_2g_3} = \rho_{g_1,g_2} + \rho_{g_1g_2,g_3}\text{.}
\end{equation*}

\begin{definition}
\label{def1}
A \emph{multiplicative bundle gerbe with connection} over a  Lie group $G$ is a triple $(\mathcal{G},\mathcal{M},\alpha)$ consisting of a bundle gerbe $\mathcal{G}$ with connection over $G$  together with an invertible bimodule
\begin{equation*}
\mathcal{M}: \mathcal{G}_1 \otimes \mathcal{G}_2
\to \mathcal{G}_{12} \otimes \mathcal{I}_{\rho}
\end{equation*}
over $G \times G$, whose curvature $\rho$ is $\Delta$-closed, and a bimodule isomorphism
\begin{equation*}
\alxydim{@R=1.8cm@C=2.1cm}{\mathcal{G}_1 \otimes \mathcal{G}_2 \otimes \mathcal{G}_3 \ar[r]^-{\mathcal{M}_{1,2} \otimes \id} \ar[d]_{\id \otimes \mathcal{M}_{2,3}} & \mathcal{G}_{12} \otimes \mathcal{G}_3 \otimes \mathcal{I}_{\rho_{1,2}} \ar[d]^{\mathcal{M}_{12,3} \otimes \id} \ar@{=>}[dl]|*+{\alpha} \\ \mathcal{G}_1 \otimes \mathcal{G}_{23} \otimes \mathcal{I}_{\rho_{2,3}} \ar[r]_-{\mathcal{M}_{1,23} \otimes \id} & \mathcal{G}_{123} \otimes \mathcal{I}_{\rho_{\Delta}}}
\end{equation*}
over $G \times G \times G$ that satisfies
the   pentagon axiom (Figure \ref{fig1} on page \pageref{fig1}).
\end{definition}

Notice that the diagram is well-defined due to the equality
\begin{equation*}
\mathcal{I}_{\rho_{\Delta}} = \mathcal{I}_{\rho_{2,3}} \otimes \mathcal{I}_{\rho_{1,23}} = \mathcal{I}_{\rho_{1,2}} \otimes \mathcal{I}_{\rho_{12,3}} 
\end{equation*}
which follows since $\rho$ is $\Delta$-closed.  If one forgets the connections and puts $\rho=0$, the above definition reduces consistently to the one of a multiplicative bundle gerbe \cite{carey4}. 

\begin{example}
\label{ex1}
Let $\varphi\in\Omega^2(G)$ be a 2-form on $G$, and  $\mathcal{G}:=\mathcal{I}_{\varphi}$  the associated trivial bundle gerbe over $G$. It can be endowed with a multiplicative structure in the following ways:
\begin{enumerate}
\item[(a)] 
In a trivial way.
We put $\rho := \Delta\varphi:= \varphi_1 -\varphi_{12} + \varphi_2$, this defines a $\Delta$-closed 2-form $\rho$ on $G^2$. We obtain an equality
\begin{equation*}
\mathcal{G}_1 \otimes \mathcal{G}_2 = \mathcal{G}_{12} \otimes \mathcal{I}_{\rho}
\end{equation*}
of bundle gerbes with connection over $G^2$, so that the identity 1-isomorphism $\mathcal{M} := \id_{\mathcal{G}_{12}}$ is an invertible bimodule as required. Together with the identity 2-morphism, this yields a multiplicative bundle gerbe with connection associated to every 2-form on $G$. 

\item[(b)] 
In a non-trivial way involving the following structure: a $\ueins$-bundle 
 $L$ with  connection over $G^2$ and a connection-preserving isomorphism 
\begin{equation*}
\phi: L_{1,2} \otimes L_{12,3} \to L_{2,3} \otimes L_{1,23}
\end{equation*}
of $\ueins$-bundles over $G^3$ satisfying the coherence condition
\begin{equation}
\label{37}
(\phi_{2,3,4} \otimes \id) \circ (\id \otimes \phi_{1,23,4}) \circ (\phi_{1,2,3} \otimes \id) = \phi_{1,2,34}\circ (\id \otimes \phi_{12,3,4})
\end{equation} 
over $G^4$. Notice that the curvature of $L$ is automatically $\Delta$-closed, so that we may put $\rho := \mathrm{curv}(L) - \Delta \varphi$. Using the functor $\bun$ from Proposition \ref{prop4}, we set $\mathcal{M} := \bun^{-1}(L)$ and  $\alpha := \bun^{-1}(\phi)$. These are morphisms as required in Definition \ref{def1}, and \erf{37} implies  the pentagon axiom. 

\end{enumerate}
\end{example}

A  subclass of pairs $(L,\phi)$  as used in Example \ref{ex1} (b) is   provided by  $\Delta$-closed 1-forms  $\psi$  on $G^2$: we set $L := \trivlin_{\psi}$, the trivial $\ueins$-bundle with $\psi$ as connection,  and  $\phi:=\id$. The choice $\psi=0$ reproduces Example \ref{ex1} (a).

\begin{example}
\label{ex4}
Let us now consider the canonical bundle gerbes $\mathcal{G}^k$ with their canonical connections. They are defined over  compact, simple and simply-connected Lie groups $G$ for any  $k\in \Z$. Explicit finite-dimensional, Lie-theoretic constructions are available \cite{gawedzki1,meinrenken1,gawedzki2}; here it will be sufficient to use abstract arguments.  The curvature of $\mathcal{G}^k$ is given by multiples $H_k=k\eta$ of the canonical 3-form
\begin{equation}
\label{21}
\eta := \frac{1}{6} \left \langle  \theta \wedge [ \theta \wedge \theta] \right \rangle \in\Omega^3(G)\text{.}
\end{equation}
Here, $\left \langle  -,- \right \rangle$ is a symmetric bilinear form on the Lie algebra $\mathfrak{g}$ which is normalized such that $\eta$ represents the generator of $H^{3}(G,\Z) = \Z$, and $\theta$ is the left-invariant Maurer-Cartan form on $G$. The canonical 3-form satisfies the identity
\begin{equation}
\label{4}
\eta_1  + \eta_2 = \eta_{12}+ \mathrm{d} \rho
\end{equation}
for 3-forms on $G^2$, where
\begin{equation}
\label{5}
\rho :=  \frac{1}{2}\left \langle  p_1^{*}\theta \wedge p_2^{*}\bar\theta
 \right \rangle 
\end{equation}
for $\bar\theta$  the right-invariant Maurer-Cartan form.
The 2-form $\rho$ is $\Delta$-closed as required. We claim that there exist 1-isomorphisms 
\begin{equation}
\label{34}
 \mathcal{G}_1^k \otimes \mathcal{G}_2^k \to \mathcal{G}^k_{12}\otimes \mathcal{I}_{k\rho}\text{.}
\end{equation}
This comes from the fact that isomorphism classes of bundle gerbes with fixed curvature over a smooth manifold $M$ are parameterized by $H^2(M,\ueins)$, but this cohomology group is  by assumption trivial for $M=G^2$. Indeed, the cur\-va\-tu\-res of the bundle gerbes on both sides of \erf{34} coincide due to (\ref{4}); hence, 1-isomorphisms exist.

Let $\mathcal{M}$ be any choice of such a 1-isomorphism. 
Now we consider the bundle gerbes $\mathcal{H} := \mathcal{G}^k_1 \otimes \mathcal{G}_2^k \otimes \mathcal{G}_3^{k} $ and $\mathcal{K} := \mathcal{G}^k_{123}\otimes \mathcal{I}_{k\rho_{\Delta}}$ with connection over $G^3$. These are the bundle gerbes in the upper left and the lower right corner of the diagram in Definition \ref{def1}. The bimodule isomorphism $\alpha$ that remains to construct is  a morphism between two objects in the category $\mathfrak{Iso}(\mathcal{H},\mathcal{K})$. We recall that this category is a module over  $\ueins\text{-}\mathfrak{Bun}_0^{\nabla}(G^3)$ in a free and transitive way. Since $G$ is simply-connected, all objects in the latter category  are isomorphic, and so are all objects in $\mathfrak{Iso}(\mathcal{K},\mathcal{H})$. Hence $\alpha$ exists. 

Not every choice of $\alpha$ will  satisfy the pentagon axiom, but we can still act with an automorphism of the trivial $\ueins$-bundle $\trivlin_0$ on the choices of $\alpha$ in terms of the functor \erf{36}. These are locally constant functions $G^3 \to \ueins$, and since $G$ is simple -- in particular connected --  just elements of $\ueins$. Now, the pentagon axiom compares compositions of pullbacks of $\alpha$ to $G^4$, namely 
\begin{equation*}
\alpha_l := \alpha_{2,3,4} \circ \alpha_{1,23,4} \circ \alpha_{1,2,3} 
\quad\text{ and }\quad
\alpha_r := \alpha_{1,2,34} \circ \alpha_{12,3,4}\text{.}
\end{equation*}
They differ by the action of a number $z\in \ueins$, say $z \otimes \alpha_l =\alpha_r$. Now consider the new choice $\alpha' := z \otimes \alpha$; this evidently satisfies the pentagon axiom
\begin{equation*}
\alpha_l' = z^3 \otimes \alpha_l  = z^2 \otimes \alpha_r  = \alpha_r'\text{.}
\end{equation*}
So, the canonical bundle gerbes $\mathcal{G}^k$ over a simple, compact and simply-connected Lie group $G$ are examples of multiplicative bundle gerbes with connection. 
\end{example}

\begin{remark}
Any bundle gerbe $\mathcal{G}$ with connection over a smooth manifold $M$ provides holonomies $\mathrm{Hol}_{\mathcal{G}}(\phi) \in \ueins$ for smooth maps $\phi:\Sigma \to M$ defined on a closed oriented surface $\Sigma$. For $(\mathcal{G},\mathcal{M},\alpha)$ a multiplicative bundle gerbe over a Lie group $G$, this holonomy has a particular \quot{multiplicative} property \cite{carey4}: for smooth maps $\phi_1,\phi_2:\Sigma \to G$ it satisfies
\begin{equation}
\label{50}
 \mathrm{Hol}_{\mathcal{G}}(\phi_1) \cdot \mathrm{Hol}_{\mathcal{G}}(\phi_2) =\mathrm{Hol}_{\mathcal{G}}(\phi_1 \cdot \phi_2) \cdot \exp \left (2\pi \im \int_{\Sigma} \Phi^{*}\rho \right )\text{,}
\end{equation}
where $\phi_1 \cdot \phi_2$ is the pointwise product, $\rho$ is the curvature of the bimodule $\mathcal{M}$ and $\Phi: \Sigma \to G \times G$ is defined by $\Phi(s) := (\phi_1(s),\phi_2(s))$.  In the physical literature \erf{50} is known as the \emph{Polyakov-Wiegmann formula}.
\end{remark}

\begin{definition}
\label{def2}
Let $(\mathcal{G},\mathcal{M},\alpha)$ and $(\mathcal{G}',\mathcal{M}',\alpha')$ be two multiplicative bundle gerbes with connection over $G$. A \emph{multiplicative 1-morphism} is a 1-morphism $\mathcal{A}:\mathcal{G} \to \mathcal{G}'$ and a 2-isomorphism
\begin{equation}
\label{39}
\alxydim{@R=1.5cm@C=2cm}{\mathcal{G}_1 \otimes \mathcal{G}_2 \ar[r]^-{\mathcal{M}} \ar[d]_{\mathcal{A}_{1} \otimes \mathcal{A}_2} & \mathcal{G}_{12} \otimes \mathcal{I}_{\rho} \ar@{=>}[dl]|*+{\beta} \ar[d]^{\mathcal{A}_{12}  \otimes \id} \\ \mathcal{G}_1' \otimes \mathcal{G}_2' \ar[r]_-{\mathcal{M}'} & \mathcal{G}_{12}' \otimes \mathcal{I}_{\rho'}}
\end{equation}
such that $\beta$ is compatible with the bimodule morphisms $\alpha$ and $\alpha'$ (Figure \ref{fig2} on page \pageref{fig2}).
\end{definition}

The existence of the 2-isomorphism $\beta$ requires that  the curvatures $\rho$ and $\rho'$ of the bimodules $\mathcal{M}$ and $\mathcal{M}'$ coincide. The composition of two multiplicative 1-morphisms
\begin{equation*}
\alxydim{@C=1.5cm}{(\mathcal{G},\mathcal{M},\alpha) \ar[r]^-{(\mathcal{A},\beta)} & (\mathcal{G}',\mathcal{M}',\alpha') \ar[r]^-{(\mathcal{A}',\beta')} & (\mathcal{G}'',\mathcal{M}'',\alpha'')}
\end{equation*}
is declared to be the 1-morphism $\mathcal{A}' \circ \mathcal{A}: \mathcal{G} \to \mathcal{G}''$ together with the 2-isomorphism which is obtained by putting  diagram \erf{39}  for $\beta$ on top of the one for $\beta'$. 
This composition of multiplicative 1-morphisms  is strictly associative. Thus, by restricting Definition \ref{def2} to invertible 1-morphisms $\mathcal{A}$, one obtains an equivalence relation on the set of multiplicative bundle gerbes with connection over $G$. The set of equivalence classes will be studied in Section \ref{sec3}. 

\begin{example}
\normalfont
\label{ex2}
We return to the multiplicative bundle gerbe $\mathcal{I}_{\varphi}$  from Example \ref{ex1} (b) constructed from a triple $(\varphi,L,\phi)$ of a 2-form $\varphi$, a $\ueins$-bundle $L$ with connection over $G^{2}$ and a certain isomorphism $\phi$. Let $\alpha\in\Omega^1(G)$ be a 1-form from which we produce a new triple $(\varphi',L',\phi)$ consisting of the 2-form  $\varphi' := \varphi + \mathrm{d}\alpha$,  the $\ueins$-bundle  $L' := L \otimes \trivlin_{\Delta\alpha}$, and the same isomorphism $\phi$. Then, there is a multiplicative 1-morphism  between $\mathcal{I}_{\varphi}$ and $\mathcal{I}_{\varphi}'$, whose 1-morphism is given by $\mathcal{A} := \bun^{-1}(\trivlin_{\alpha})$, and whose 2-isomorphism is the identity. 
\end{example}

Concerning the canonical bundle gerbes $\mathcal{G}^k$ from Example \ref{ex4} there is a multiplicative 1-isomorphism between $(\mathcal{G}^k,\mathcal{M},\alpha)$ and $(\mathcal{G}^k,\mathcal{M}',\alpha')$ for all different choices of the bimodule $\mathcal{M}$  and the bimodule morphism $\alpha$. This means that the canonical bundle gerbe $\mathcal{G}^k$ is multiplicative in a unique way (up to multiplicative 1-isomorphisms); see Corollary \ref{co1} below. 

\

\section{Cohomological Classification}

\label{sec3}

We introduce a cohomological description for multiplicative bundle gerbes with connection and derive a number of classification results. Concerning the exponential map of $\ueins$ we  fix the convention that the exponential sequence
\begin{equation*}
\alxydim{@C=1.1cm}{0 \ar[r] & \Z \ar@{^(->}[r] & \R \ar[r]^-{\mathrm{e}^{2\pi\im}} & \ueins \ar[r] & 1}
\end{equation*}
is exact, and we use the differential of $\mathrm{e}^{2\pi\im}$ to identify the Lie algebra of $\ueins$ with $\R$.

\subsection{Deligne Cohomology}

We recall the relation between bundle gerbes with connection and degree two Deligne cohomology. For $n\geq 0$, the \emph{Deligne  complex} $\mathcal{D}^{\bullet}(n)$ \cite{brylinski1} on a smooth manifold $M$  is the sheaf complex
\begin{equation*}
\alxydim{@C=1.1cm}{\ueinssheaf_M \ar[r]^-{\mathrm{dlog}} & \underline{\Omega}^1_M \ar[r]^-{\mathrm{d}} & ... \ar[r]^-{\mathrm{d}} & \underline{\Omega}^n_M\text{.}}
\end{equation*}
Here, $\ueinssheaf_M$ denotes the sheaf of smooth $\ueins$-valued functions, $\underline{\Omega}^k_M$ denotes the sheaf of $k$-forms and $\mathrm{d}$ is the exterior derivative. Finally,
$\mathrm{dlog}$ sends a smooth function $g: U \to \ueins$ to the pullback $g^{*}\theta \in \Omega^1(U)$ of the Maurer-Cartan form $\theta$ on $\ueins$, which is a real-valued 1-form according to the above convention. 

The hypercohomology of the Deligne complex is denoted by $H^k(M,\mathcal{D}(n))$. These cohomology groups can be computed via \v Cech resolutions: for any open cover $\mathscr{U}$ of $M$ one has a complex
\begin{equation}
\label{13}
\mathrm{Del}^{m}(\mathscr{U},n) :=  \bigoplus_{m=p+k} \check C^p(\mathscr{U},\mathcal{D}^k(n))
\end{equation}
whose differential is
\begin{equation}
\label{58}
\mathrm{D}|_{\check C^p(\mathscr{U},\mathcal{D}^k(n))} := \begin{cases}
\delta + (-1)^p \mathrm{d} & \text{for }k>0 \\
\delta + (-1)^p\mathrm{dlog} & \text{else.} \\
\end{cases}
\end{equation}
The Deligne cohomology groups can then be obtained as the direct limit
\begin{equation}
\label{9}
H^m(M,\mathcal{D}(n)) = \lim_{\overrightarrow{\;\;\mathscr{U}\;\;}} \; H^m(\mathrm{Del}^{\bullet}(\mathscr{U},n),\mathrm{D})
\end{equation}
over refinements of open covers.

Of most importance are the groups for $m=n$. $H^0(M,\mathcal{D}(0))$ is the group of smooth $\ueins$-valued functions on $M$. To see what $H^1(M,\mathcal{D}(1))$ is, let $U_i$ be the open sets of $\mathscr{U}$. Then,  transition functions 
\begin{equation*}
g_{ij}: U_i \cap U_j \to \ueins
\end{equation*}
and local connections 1-forms $A_{i}$ of  a principal $\ueins$-bundle with connection define a cocycle $\xi=(g_{ij},A_i)\in\mathrm{Del}^1(\mathscr{U},1)$. A connection-preserving isomorphism  defines a cochain $\eta \in \mathrm{Del}^0(\mathscr{U},1)$ in such a way that $\xi' = \xi + \mathrm{D}(\eta)$. This establishes a bijection between isomorphism classes of $\ueins$-bundles with connection and $H^1(M,\mathcal{D}(1))$, see \cite{brylinski1}, Th. 2.2.11. 

Similarly, $H^2(M,\mathcal{D}(2))$ classifies bundle gerbes with connection. For every bundle gerbe $\mathcal{G}=(\pi,L,C,\mu)$ with connection over $M$ there exists an open cover $\mathscr{U}$ that permits to extract a cocycle $(g,A,B)$ in $\mathrm{Del}^{2}(\mathscr{U},2)$. It consists of smooth functions 
\begin{equation*}
g_{ijk}:U_i \cap U_j \cap U_k \to \ueins
\end{equation*}
coming from the isomorphism $\mu$, of 1-forms $A_{ij}\in\Omega^1(U_i \cap U_j)$ coming from the connection on the  bundle $L$, and of 2-forms $B_i\in \Omega^2(U_i)$ coming from the curving $C$. In terms of its local data, the curvature of the bundle gerbe $\mathcal{G}$ is given by $H|_{U_i} = \mathrm{d}B_i$, and the  \emph{Dixmier-Douady class} $\mathrm{DD}(\mathcal{G}) \in H^3(M,\Z)$ mentioned in the introduction is the image of the \v Cech cohomology class of $g$ under the isomorphism
\begin{equation*}
H^2(M,\ueinssheaf) \cong H^3(M,\Z)\text{.}
\end{equation*}

For any 1-isomorphism $\mathcal{A}:\mathcal{G} \to \mathcal{G}'$ one finds a cochain $\eta\in \mathrm{Del}^{1}(\mathscr{U},2)$ with $\xi' = \xi + \mathrm{D}(\eta)$, and for any 2-isomorphism $\varphi: \mathcal{A} \Rightarrow \mathcal{A}'$ between such 1-isomorphisms a cochain $\alpha\in \mathrm{Del}^{0}(\mathscr{U},2)$ with $\eta' = \eta + \mathrm{D}(\alpha)$. Conversely, one can reconstruct bundle gerbes, 1-isomorphisms and 2-isomorphisms from given cocycles and cochains, respectively. 
This establishes a bijection \cite{murray2}
\begin{equation}
\label{40}
\left\{\begin{array}{c}
\text{Isomorphism classes}\\
\text{of bundle gerbes with}\\
\text{connection over $M$}\\
\end{array}\right\}
\cong H^2(M,\mathcal{D}(2))\text{.}
\end{equation}
A detailed account  of the relation between Deligne cohomology and  geometric objects can be found in literature, e.g. 
\cite{schweigert2} and references therein.

\subsection{A Modification of Simplicial Deligne Cohomology}

The discussion of multiplicative bundle gerbes (without connection) over a Lie group $G$ shows that the cohomology of the classifying space $BG$ is relevant \cite{brylinski3,carey4}. We shall review some aspects that will be important.

One considers the simplicial manifold $G^{\bullet} =\lbrace G^q \rbrace_{q \geq 0}$  with the usual face maps $\Delta_i: G^{q} \to G^{q-1}$ for $0 \leq i \leq q$. In the notation of Section \ref{sec2} these face maps are given by 
\begin{equation*}
\Delta_0 = m_{2,...,q}
\;\;\;\text{, }\;\;\;
\Delta_q = m_{1,...,q-1}
\;\;\;\text{ and }\;\;\;
\Delta_i =m_{1,...,i(i+1),...,q}
\text{ for }
1\leq i < q\text{.}
\end{equation*}
For $A$  an abelian Lie group,  let $\underline{A}_M$ denote the sheaf of
smooth $A$-valued functions on a smooth manifold $M$. The sheaf homomorphisms
\begin{equation}
\label{18}
\Delta := \sum_{i=0}^{q} (-1)^{i} \Delta_i^{*}: \underline{A}_{G^{q-1}} \to \underline{A}_{G^{q}}
\end{equation}
define a complex
\begin{equation}
\alxy{\underline{A}_{\lbrace \ast \rbrace} \ar[r]^-{\Delta} & \underline{A}_G \ar[r]^-{\Delta} & \underline{A}_{G^2} \ar[r]^-{\Delta} & \underline{A}_{G^3} \ar[r]^-{\Delta} & ... }
\end{equation}
of sheaves. Following  \cite{brylinski3} we denote the hypercohomology groups of this complex  by $H^m(BG,\underline{A})$. Indeed, if $A^{\delta}$ is the group $A$ equipped with the discrete topology, $H^m(BG,\underline{A}^{\delta})$ is the singular cohomology $H^m(BG,A)$ of the topological space $BG$.

The same cohomology groups have been considered by Segal \cite{segal4}; they are furthermore  related to the \emph{continuous cohomology} $H^m_{\mathrm{ct}}(G,V)$ of $G$: for $V$ a topological vector space endowed with a continuous $G$-action, this is the  cohomology  of a complex whose cochain groups are the continuous maps from $G^q$ to $V$, and whose differential is a continuous analog of the one of finite group cohomology. In the case that $V$ is finite-dimensional and $G$ acts trivially, 
\begin{equation*}
H^m_{\mathrm{ct}}(G,V) = H^m(BG,\underline{V})\text{,}
\end{equation*}
where $V$ is  on the right hand side considered as an abelian Lie group (\cite{brylinski3}, Prop. 1.3).

The homomorphisms $\Delta$ from \erf{18} generalize to arbitrary sheaves of abelian groups, for example to differential forms,
\begin{equation}
\label{22}
\Delta: \Omega^k(G^q) \to \Omega^k(G^{q+1})\text{.}
\end{equation} 
In Section \ref{sec2} we have called $k$-forms $\rho \in \Omega^k(G^2)$ with $\Delta\rho=0$ \emph{$\Delta$-closed}. We denote the kernel of \erf{22} by $\Omega^k_{\Delta}(G^q)$.

We compute
the  cohomology groups $H^m(BG,\underline{A})$  via \v Cech resolutions. Let $\mathfrak{U} = \lbrace\mathscr{U}_q\rbrace_{q\geq 1}$ be a sequence of open covers of $G^q$, whose index sets form a simplicial set in such a way that
\begin{equation*}
\Delta_k(U^{q}_i) \subset U^{q-1}_{\Delta_k(i)}
\end{equation*}
for $U^q_i$ on open set of $\mathscr{U}_q$. A construction of such sequences with arbitrarily fine open covers $\mathscr{U}_q$ can be found in Section 4 of \cite{tu1}. We form the double complex $\check C^p(\mathscr{U}_q,\underline{A}_{G^q})$
and denote its total complex by $\mathrm{Tot}^{m}_{\Delta}(\mathfrak{U},\underline{A})$, equipped with the differential 
\begin{equation*}
\Delta|_{\check C^p(\mathscr{U}_q,\underline{A}_{G^q})} := (-1)^{q}\delta + \Delta\text{.}
\end{equation*}
The cohomology of this total complex computes -- in the direct limit over refinements of sequences of open covers -- the groups $H^m(BG,\underline{A})$.

In order to classify multiplicative bundle gerbes with connection over $G$, we consider the \emph{simplicial Deligne complex} $\mathrm{Del}^\bullet_\Delta(\mathfrak{U},n)$. Its cochain groups are 
\begin{equation}
\label{14}
\mathrm{Del}^m_\Delta(\mathfrak{U},n) := \bigoplus_{m=j+q} \mathrm{Del}^j(\mathscr{U}_{q},n)\text{,}
\end{equation}
and its differential is
\begin{equation}
\label{52}
\mathrm{D}_{\Delta}|_{\mathrm{Del}^j(\mathscr{U}_{q},n)} := (-1)^{q} \mathrm{D} + \Delta\text{.}
\end{equation}
Taking the direct limit over sequences of open covers $\mathfrak{U}$, one obtains the simplicial Deligne cohomology $H^m(BG,\mathcal{D}(n))$ as introduced in \cite{brylinski4,brylinski3}.  Notice that the curvature 2-form $\rho$ of the bimodule $\mathcal{M}$  in the definition of a multiplicative bundle gerbe with connection has not yet a place in the simplicial Deligne complex.

For this purpose, we modify the cochain groups in degree $n+1$,
\begin{equation}
\label{15}
\mathrm{Del}^{n+1}_\Delta(\mathfrak{U},n)^{bi} := \mathrm{Del}^{n+1}_\Delta(\mathfrak{U},n) \oplus \Omega^n_{\Delta}(G^2)\text{,}
\end{equation}
while $\mathrm{Del}^m_\Delta(\mathfrak{U},n)^{bi} := \mathrm{Del}^m_\Delta(\mathfrak{U},n)$ is as before in all other degrees $m\neq n+1$.  
On the additional summand we define the differential by 
\begin{equation*}
\mathrm{D}^{bi}_{\Delta}|_{\Omega_{\Delta}^n(G^2)}: \Omega_{\Delta}^n(G^2) \to \mathrm{Del}^{n}(\mathscr{U}_2,n): \rho \mapsto \begin{cases} (1,0,...,0,-\rho) & n \geq 1 \\
\mathrm{e}^{-2\pi\mathrm{i}\rho} & n=0 \\
\end{cases}
\end{equation*}
and keep $\mathrm{D}^{bi}_{\Delta} := \mathrm{D}_{\Delta}$ in all other cases.
The new differential still satisfies $D^{bi}_{\Delta} \circ \mathrm{D}^{bi}_{\Delta}=0$. The cohomology of this complex -- in the direct limit over sequences of open covers -- is denoted
\begin{equation*}
H^m(BG,\mathcal{D}^{bi}(n)) := \lim_{\overrightarrow{\;\;\mathfrak{U}\;\;}} \; H^m(\mathrm{Del}^{\bullet}_\Delta(\mathfrak{U},n)^{bi},\mathrm{D}^{bi}_{\Delta})\text{.}
\end{equation*}
The most interesting groups are  those with $m=n+1$, of which we shall now  explicitly describe the first ones.

 For $n=0$, we have
\begin{eqnarray*}
\mathrm{Del}^1_\Delta(\mathfrak{U},0)^{bi} &=&\mathrm{Del}^{0}(\mathscr{U}_1,0) \oplus \Omega^0_{\Delta}(G^2)  \\
\mathrm{Del}^2_\Delta(\mathfrak{U},0)^{bi} &=& \mathrm{Del}^{0}(\mathscr{U}_2,0) \oplus \mathrm{Del}^{1}(\mathscr{U}_1,0) 
\end{eqnarray*}
and the differential $\mathrm{D}_{\Delta}^{bi}$ sends a cochain $(g,\rho)$ to
$(\Delta g \cdot \mathrm{e}^{-2\pi\mathrm{i}\rho},-\mathrm{D}g)$. The second component  $\mathrm{D}g=1$ of the cocycle condition infers that $g$ is a globally defined smooth function $g: G \to \ueins$. The first component $\Delta g = \mathrm{e}^{2\pi\mathrm{i}\rho}$ infers that $g$ is a \emph{projective group homomorphism}, i.e. 
\begin{equation}
\label{10}
 g(x)  g(y)= g(xy)\mathrm{e}^{2\pi\mathrm{i}\rho(x,y)}
\end{equation}
for all $x,y\in G$. The condition $\Delta\rho=0$ imposed in \erf{15}  is here important for the compatibility of (\ref{10}) with the associativity of $G$. Summarizing, $H^1(BG,\mathcal{D}^{bi}(0))$ is the group of smooth projective group homomorphisms $(g,\rho)$.

For $n=1$, the the first cochain groups are
\begin{eqnarray*}
\mathrm{Del}^1_\Delta(\mathfrak{U},1)^{bi} &=& \mathrm{Del}^{0}(\mathscr{U}_1,1)
\\
\mathrm{Del}^2_\Delta(\mathfrak{U},1)^{bi} &=& \mathrm{Del}^{0}(\mathscr{U}_2,1)\oplus \mathrm{Del}^{1}(\mathscr{U}_1,1) \oplus  \Omega^1_{\Delta}(G^2)
\\
\mathrm{Del}^3_\Delta(\mathfrak{U},1)^{bi} &=&  \mathrm{Del}^{0}(\mathscr{U}_3,1) \oplus \mathrm{Del}^{1}(\mathscr{U}_2,1) \oplus\mathrm{Del}^{2}(\mathscr{U}_1,1)\text{.}
\end{eqnarray*}
The coboundary of a cochain $h$ in degree one is $(\Delta h,-\mathrm{D}h,0)$, and the one of a cochain $(a,\xi,\rho)$ in degree two is $(\Delta a, \mathrm{D}a + \Delta\xi - (1,\rho), -\mathrm{D}\xi)$. The cocycle condition for $(a,\xi,\rho)$ has the following components: $\mathrm{D}\xi = 0$ means that $\xi$ is a cocycle for  a $\ueins$-bundle $L$ with connection over $G$.  The next condition $\mathrm{D}a + \Delta\xi =(1,\rho)$ means that $a$ defines a connection-preserving bundle isomorphism
\begin{equation*}
\alpha: p_1^{*}L \otimes p_2^{*}L  \to m^{*}L\otimes \trivlin_{\rho}
\end{equation*}
over $G^{2}$, where $\trivlin_{\rho}$ is the trivial $\ueins$-bundle equipped with the connection 1-form $\rho$. The remaining condition $\Delta a=1$ is a coherence condition for $\alpha$ when pulled back to $G^3$. This way, a cocycle in $\mathrm{Del}^2_\Delta(\mathfrak{U},1)^{bi}$ defines a \textit{multiplicative $\ueins$-bundle with connection}. Interesting examples of such bundles can be found in \cite{murray4}. 

Two cocycles $(a,\xi,\rho)$ and $(a',\xi',\rho')$ are equivalent if they differ by  a coboundary $(\Delta h,-\mathrm{D}h,0)$: this means that the 1-forms coincide, $\rho=\rho'$, the component $\xi'=\xi - \mathrm{D}h$ means that $-h$ defines an isomorphism $\nu: L \to L'$ of $\ueins$-bundles with connection, and $a'=a \cdot \Delta h$ means that $\nu$ respects the multiplicative structures defined by $a$ and $a'$. So, cobordant cocycles  in $\mathrm{Del}^2_\Delta(\mathfrak{U},1)^{bi}$ define isomorphic multiplicative $\ueins$-bundles with connection. Summarizing, the cohomology group $H^2(BG,\mathcal{D}^{bi}(1))$ is in bijection to the set of isomorphism classes of multiplicative $\ueins$-bundles with connection over $G$.

The  most important cohomology group in this article is $H^3(BG,\mathcal{D}^{bi}(2))$. The  relevant cochain groups are here
\begin{eqnarray*}
\mathrm{Del}_{\Delta}^2(\mathfrak{U},2)^{bi} &=& \mathrm{Del}^0(\mathscr{U}_2,2) \oplus \mathrm{Del}^1(\mathscr{U}_1,2)
\\
\mathrm{Del}^3_{\Delta}(\mathfrak{U},2)^{bi} &=& \mathrm{Del}^{0}(\mathscr{U}_3,2) \oplus \mathrm{Del}^1(\mathscr{U}_2,2) \oplus \mathrm{Del}^2(\mathscr{U}_1,2) \oplus \Omega^2_{\Delta}(G^2)
\\
\mathrm{Del}^4_{\Delta}(\mathfrak{U},2)^{bi} &=& \mathrm{Del}^{0}(\mathscr{U}_4,2) \oplus \mathrm{Del}^1(\mathscr{U}_3,2) \oplus \mathrm{Del}^2(\mathscr{U}_2,2) \oplus \mathrm{Del}^3(\mathscr{U}_1,2)\text{.}
\end{eqnarray*} 
The coboundary of a cochain $(h,\zeta)$ in degree two is $(\Delta h,\mathrm{D}h + \Delta\zeta,-\mathrm{D}\zeta)$, and the one of a cochain $(a,\mu,\xi,\rho)$ in degree three is
\begin{equation*}
\mathrm{D}_{\Delta}^{bi}(a,\mu,\xi,\rho) = (\Delta a,-\mathrm{D}a + \Delta \mu,\mathrm{D}\mu + \Delta\xi-(1,0,\rho),-\mathrm{D}\xi)\text{.}
\end{equation*}
The last component $\mathrm{D}\xi=0$ of the cocycle condition means that $\xi$ is  a cocycle for a bundle gerbe $\mathcal{G}$ with connection over $G$. The condition $\mathrm{D}\mu + \Delta\xi-(1,0,\rho)=0$ means that $\mu$ defines a 1-isomorphism
\begin{equation*}
\mathcal{M}: p_1^{*}\mathcal{G} \otimes p_2^{*}\mathcal{G} \to m^{*}\mathcal{G} \otimes \mathcal{I}_{\rho} 
\end{equation*}
of bundle gerbes with connection over $G^2$. The condition $\mathrm{D}a = \Delta \mu$ means that $a$ is  a 2-isomorphism
\begin{equation*}
\alpha: (\mathcal{M}_{12,3} \otimes \id) \circ (\mathcal{M}_{1,2} \otimes \id) \Rightarrow (\mathcal{M}_{1,23} \otimes \id) \circ (\id\otimes\mathcal{M}_{2,3}) \text{,}
\end{equation*}
and the last condition $\Delta\alpha=1$ infers the pentagon axiom for $\alpha$. Summarizing, a cocycle $(a,\mu,\xi,\rho)$ defines a multiplicative bundle gerbe with connection over $G$. 

Let $(a,\mu,\xi,\rho)$ and $(a',\mu',\xi',\rho')$ be cocycles differing by the coboundary of a cochain $(h,\zeta)\in \mathrm{Del}_{\Delta}^2(\mathfrak{U},2)^{bi}$. The component $\xi'=\xi - \mathrm{D}\zeta$ means that $-\zeta$ is a 1-isomorphism $\mathcal{A}:\mathcal{G} \to \mathcal{G}'$ between the bundle gerbes $\mathcal{G}$ and $\mathcal{G}'$ coming from the cocycles $\xi$ and $\xi'$. The two forms coincide, $\rho=\rho'$, and the component $\mu' = \mu + \mathrm{D}h + \Delta(-\zeta)$ means that $h$ is a 2-isomorphism
\begin{equation*}
\beta: \mathcal{A}_{12} \circ \mathcal{M} \Rightarrow \mathcal{M}' \circ (\mathcal{A}_1 \otimes \mathcal{A}_2) \text{.}
\end{equation*}
The last component $a' = a \cdot \Delta h$ guarantees the compatibility of $\beta$ with the bimodule morphisms $\alpha$ and $\alpha'$. Hence, cobordant cocycles define isomorphic multiplicative bundle gerbes with connection. 

Conversely, since we can extract local data from bundle gerbes with connection, 1-isomorphisms and 2-isomorphisms, we conclude

\begin{proposition}
\label{prop8}
There exists a canonical bijection
\begin{equation*}
\left\{\begin{array}{c}
\text{Isomorphism classes of}\\
\text{multiplicative bundle gerbes}\\
\text{with connection over $G$}\\
\end{array}\right\}
\cong
H^3(BG,\mathcal{D}^{bi}(2))\text{.}
\end{equation*}
\end{proposition}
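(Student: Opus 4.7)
The plan is to establish the bijection by constructing assignments in both directions and checking that they descend to the stated quotients. The forward direction, sending a cocycle $(a,\mu,\xi,\rho)\in\mathrm{Del}^3_{\Delta}(\mathfrak{U},2)^{bi}$ to a multiplicative bundle gerbe with connection, has essentially been read off the cocycle equations in the preceding discussion: $\xi$ gives the bundle gerbe $\mathcal{G}$ via \erf{40}, the equation $\mathrm{D}\mu+\Delta\xi=(1,0,\rho)$ gives the bimodule $\mathcal{M}$, and the equation $\mathrm{D}a=\Delta\mu$ gives the 2-isomorphism $\alpha$ with $\Delta a=1$ providing the pentagon axiom. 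What remains to prove is surjectivity (extracting a cocycle from a given geometric object), injectivity (isomorphic geometric objects yield cobordant cocycles), and that the construction is compatible with the direct limit.

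For the inverse map, I would start from a multiplicative bundle gerbe with connection $(\mathcal{G},\mathcal{M},\alpha)$ and extract local data step by step. First, choose a sequence $\mathfrak{U}=\{\mathscr{U}_q\}$ of open covers of $G^q$ with simplicial index sets, fine enough that $\mathscr{U}_1$ supports local data $\xi\in\mathrm{Del}^2(\mathscr{U}_1,2)$ for $\mathcal{G}$ via the standard procedure giving \erf{40}. Next, using that pullbacks along the face maps $\Delta_i$ land in the proper open sets of $\mathscr{U}_2$, the bimodule $\mathcal{M}$ admits local trivializations yielding a cochain $\mu\in\mathrm{Del}^1(\mathscr{U}_2,2)$ with $\mathrm{D}\mu+\Delta\xi-(1,0,\rho)=0$, where $\rho$ is the globally defined curvature 2-form of $\mathcal{M}$ (which is automatically $\Delta$-closed by the existence of $\alpha$). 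Finally, the 2-isomorphism $\alpha$ provides a cochain $a\in\mathrm{Del}^0(\mathscr{U}_3,2)$ with $\mathrm{D}a-\Delta\mu=0$, and the pentagon axiom over $G^4$ forces $\Delta a=1$. Together this gives the desired cocycle.

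For well-definedness and injectivity, different choices of local data for the same geometric object differ by a coboundary, using the corresponding correspondences for bundle gerbes, 1-isomorphisms and 2-isomorphisms recalled after \erf{40}; after passing to a common refinement this produces a well-defined class in the direct limit. The analysis of coboundaries already carried out in the excerpt shows that multiplicative 1-isomorphisms $(\mathcal{A},\beta)$ in the sense of Definition \ref{def2} correspond bijectively to coboundaries $(h,\zeta)\in\mathrm{Del}^2_{\Delta}(\mathfrak{U},2)^{bi}$, so equivalence classes of multiplicative bundle gerbes inject into $H^3(BG,\mathcal{D}^{bi}(2))$; conversely, a coboundary visibly produces a multiplicative 1-isomorphism, closing the argument.

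The main obstacle is organizing the open covers coherently across $G$, $G^2$, $G^3$, and $G^4$: one needs a sequence $\mathfrak{U}$ that is simultaneously adapted to $\xi$, $\mu$, and $a$, and whose face maps respect indices so that expressions like $\Delta\xi$ and $\Delta a$ make sense without further refinement. This is exactly the structure supplied by the construction of Section~4 of \cite{tu1}, which produces arbitrarily fine such sequences, so the obstacle dissolves in the direct limit. A secondary bookkeeping issue is tracking the signs $(-1)^q$ in \erf{52} against those in \erf{58} to confirm that the pentagon axiom is encoded by $\Delta a=1$ rather than a twisted variant; this is routine but must be checked in order for the assignment to be truly canonical.
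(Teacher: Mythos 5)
Your proposal matches the paper's argument: the paper's proof of Proposition \ref{prop8} consists precisely of the preceding cocycle-by-cocycle and coboundary-by-coboundary dictionary, with the converse direction handled by extracting local data on a simplicial sequence of covers as in Section~4 of \cite{tu1} and passing to the direct limit. The only cosmetic difference is that you derive the $\Delta$-closedness of $\rho$ from the existence of $\alpha$, whereas the paper builds it into Definition \ref{def1}; this changes nothing.
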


We remark that similar bijections for multiplicative gerbes without connection, and with connection but with vanishing curvature $\rho$,  have been shown before \cite{brylinski4,brylinski3,carey4}.

\subsection{Multiplicative Classes}

\def\mch{\mathrm{MC}}

In this section we derive four results on the groups $H^m(BG,\mathcal{D}^{bi}(n))$.
The first result is related to the projection
\begin{equation}
\label{57}
p^{m}: \mathrm{Del}^{m}_\Delta(\mathfrak{U},n)^{bi} \to \mathrm{Tot}^{m}_{\Delta}(\mathfrak{U},\ueinssheaf)
\end{equation}
onto those components of $\mathrm{Del}^{m}_\Delta(\mathfrak{U},n)^{bi}$ which have values in the sheaf $\ueinssheaf$. This is  a chain map into the complex whose cohomology is $H^m(BG,\ueinssheaf)$. Together with the connecting homomorphism 
\begin{equation}
\label{7}
H^m(BG,\ueinssheaf) \to H^{m+1}(BG,\Z)
\end{equation}
of the exponential sequence, the map induced by $p^{m}$ is a homomorphism
\begin{equation}
\label{41}
\mch: H^m(BG,\mathcal{D}^{bi}(n)) \to H^{m+1}(BG,\Z)\text{.}
\end{equation}
If $(\mathcal{G},\mathcal{M},\alpha)$ is a multiplicative bundle gerbe with connection over $G$ that corresponds to a class $\xi\in H^3(BG,\mathcal{D}^{bi}(2))$ under the bijection of Proposition \ref{prop8}, we call $\mch(\xi)\in H^4(BG,\Z)$ its \emph{multiplicative class}.

Another   invariant of a multiplicative bundle gerbe with connection is the pair $(H,\rho)$ consisting of the curvature 3-form of $\mathcal{G}$  and the curvature 2-form of the bimodule $\mathcal{M}$. In general, there is a projection
\begin{equation}
\label{32}
\Omega: H^{n+1}(BG,\mathcal{D}^{bi}(n)) \to \Omega^{n+1}(G) \oplus \Omega^n(G^2)
\end{equation}
whose result is the pair $(\mathrm{d}\omega^n,\rho)$ consisting of the derivative of the top-form in  the Deligne cocycle $(g,\omega^1,...,\omega^n)$ in $\mathrm{Del}^{n}(\mathscr{U}_1,n)$ and the  $n$-form $\rho$ on $G^2$
we have added in (\ref{15}). A pair $(H,\rho)$ of differential forms in the image of the homomorphism $\Omega$ satisfies 
\begin{equation}
\label{42}
\mathrm{d}H=0
\quad\text{, }\quad
\Delta H- \mathrm{d}\rho=0
\quad\text{ and }\quad
\Delta\rho=0\text{.}
\end{equation}
In general, equations \erf{42} mean that $(H,\rho,0,...,0)$ is a cocycle in the total complex of the  \emph{simplicial de Rham double complex }$\Omega^{\bullet}(G^{\bullet})$, whose differential is
\begin{equation*}
\mathrm{d}_{\Delta}|_{\Omega^k(G^q)} = \Delta + (-1)^{1+q}\mathrm{d}\text{,}
\end{equation*}
with the signs chosen compatible with the conventions \erf{58} and \erf{52}.  The cohomology of this complex  computes the real cohomology of $BG$  via \emph{extended de Rham isomorphisms}
$H^{\bullet}(BG,\R) \cong H^{\bullet}(\Omega G)$ \cite{bott3}. 

Our first result on the groups $H^m(BG,\mathcal{D}^{bi}(n))$ is:

\begin{proposition}
\label{prop9}
The image of the multiplicative class $\mch(\xi)$ of a class $\xi$ in $H^{n+1}(BG,\mathcal{D}^{bi}(n))$ under the homomorphisms
\begin{equation}
\label{33}
\alxydim{@C=0.8cm}{H^{n+2}(BG,\Z) \ar[r] & H^{n+2}(BG,\R) \ar[r]^-{\cong}& H^{n+2}(\Omega G)}
\end{equation}
coincides with the class defined by $\Omega(\xi)$. 
\end{proposition}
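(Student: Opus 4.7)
The plan is to represent the class $\xi \in H^{n+1}(BG,\mathcal{D}^{bi}(n))$ by an explicit cocycle in $\mathrm{Del}^{n+1}_\Delta(\mathfrak{U},n)^{bi}$ and then to compute both sides of the claimed equality on that representative. A general cocycle has the form $\xi = ((\xi^{p,k,q}),\rho)$ with $\xi^{p,k,q} \in \check C^p(\mathscr{U}_q,\mathcal{D}^k(n))$ for $p+k+q=n+1$ and $\rho \in \Omega^n_\Delta(G^2)$; I denote by $\omega^n := \xi^{0,n,1} \in \Omega^n(\mathscr{U}_1)$ the top form of the Deligne cocycle on $\mathscr{U}_1$, so that $H = \mathrm{d}\omega^n$. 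To unfold $\mch(\xi)$ I would project $\xi$ onto its $k=0$ components, lift them locally through the logarithm of the exponential sequence to an $\R$-valued cochain $\tilde p$, and form $\mathrm{D}_\Delta \tilde p$: by the standard recipe for the connecting homomorphism this cochain is $\Z$-valued and represents $\mch(\xi)$, and viewed with $\R$-coefficients it represents the image of $\mch(\xi)$ in $H^{n+2}(BG,\R)$.

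The heart of the argument is to telescope $\mathrm{D}_\Delta \tilde p$ through the Deligne rows using the cocycle equation $\mathrm{D}^{bi}_\Delta \xi = 0$. Each row equates an $\mathrm{dlog}$-combination of $k$-th degree pieces with a \v Cech-$\Delta$-combination of $(k{+}1)$-st degree pieces; under the $\R$-lift, $\mathrm{dlog}$ becomes $\mathrm{d}$, so one can successively subtract off the form pieces $\xi^{p,k,q}$ for $k=1,\ldots,n$ from the representative at no cost in $\R$-cohomology. The additional summand in \erf{15}, together with the recipe $\mathrm{D}^{bi}_\Delta|_{\Omega^n_\Delta(G^2)}:\rho \mapsto (1,0,\ldots,0,-\rho)$, is engineered precisely so that $\rho$ survives in the $\Omega^n(G^2)$-slot of the telescoped cocycle rather than being cancelled by the contributions from the $k=n$ row.

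The main obstacle is the sign and index bookkeeping: one must track the signs $(-1)^q$ from \erf{52} and $(-1)^p$ from \erf{58} across all \v Cech and simplicial directions, and verify that after the full telescoping every bidegree-$(n-r,r{+}2)$ component with $r \geq 1$ vanishes; this should follow from the cocycle equation in the $k=0$ corner, where no form components of $\xi$ remain to contribute. The outcome is that $\mathrm{D}_\Delta \tilde p$ is $\R$-cohomologous to the simplicial de Rham cocycle $(H,\rho,0,\ldots,0)$ described by \erf{42}, which by construction represents $\Omega(\xi)$. The extended de Rham isomorphism of~\cite{bott3} then matches this class with the asserted image of $\mch(\xi)$ in $H^{n+2}(\Omega G)$, completing the proof.
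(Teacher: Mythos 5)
Your proposal is correct and follows essentially the same route as the paper: both lift the $\ueinssheaf$-valued components through the exponential sequence to get a \v Cech representative of $\mch(\xi)$ in $\R$-coefficients, then use the Deligne cocycle conditions (with $\mathrm{dlog}$ becoming $\mathrm{d}$ after the lift) to exhibit a coboundary in the triple complex $\check C^p(\mathscr{U}_q,\underline{\Omega}^k)$ connecting that representative to the simplicial de~Rham cocycle $(H,\rho,0,\dots,0)$, and conclude via the definition of the extended de~Rham isomorphism. The paper packages your row-by-row telescoping into a single statement --- the coboundary of the logarithmic section applied to $\xi$ equals the sum of the inclusions of $\Omega(\xi)$ and of the connecting-homomorphism cocycle $\kappa$ --- but the computation, including the sign bookkeeping you defer, is the same.
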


\begin{proof}
The extended de Rham isomorphisms can be defined as follows. The inclusions \begin{equation*}
 \Omega^k(G^q) \hookrightarrow  \check C^0(\mathscr{U}_q,\underline{\Omega}^k)
\quad\text{ and }\quad
 \check C^p(\mathscr{U}_q,\R) \hookrightarrow  \check C^p(\mathscr{U}_q,\underline{\Omega}^0)
\end{equation*}
define chain maps from the double complexes which compute, respectively, $H^m(\Omega G)$ and $H^m(BG,\R)$, into  the  triple complex $\check C^p(\mathscr{U}_q,\underline{\Omega}^k)$ which has a differential $\mathrm{D}_{\Omega}$ with the signs arranged like the ones in \erf{52}. By definition, two cocycles correspond under the extended de Rham isomorphism $H^m(BG,\R)\cong H^m(\Omega G)$ to each other, if the sum of their inclusions is a coboundary in this triple complex. 

The triple complex $\check C^p(\mathscr{U}_q,\underline{\Omega}^k)$ is similar to the  complex $\mathrm{Del}^\bullet_\Delta(\mathfrak{U},n)^{bi}$, except (a) it has for $k=0$ the sheaf $\underline{\Omega}^0$ instead of $\ueinssheaf$, (b) it is not truncated above $k=n$, and (c) has not the additional $n$-form. Thus, by taking exponentials, truncating and putting zero for the additional $n$-form, we have a chain map from $\check C^p(\mathscr{U}_q,\underline{\Omega}^k)$ to $\mathrm{Del}^\bullet_\Delta(\mathfrak{U},n)$. Conversely, if we assume open covers $\mathscr{U}_q$ on which one can choose smooth logarithms of $\ueins$-valued functions, one has a section of this chain map, which is itself \emph{not} a chain map:
a straightforward computation shows that the coboundary of the image of a cocycle $\xi \in \mathrm{Del}^m_\Delta(\mathfrak{U},n)$ under this section is precisely the sum of the inclusion  of $\Omega(\xi)$ and of the inclusion of a cocycle 
\begin{equation*}
\kappa :={\textstyle\frac{1}{2\pi\im}}(-\delta\mathrm{log}\alpha_1,\Delta\mathrm{log}\alpha_1 + \delta\mathrm{log}\alpha_2,...,\Delta\mathrm{log}\alpha_n)
\end{equation*}
in $\mathrm{Tot}_{\Delta}^m(\mathfrak{U},\R)$, where  $(\alpha_1,...,\alpha_n) = p^m(\xi)$ under the projection \erf{57}.

It remains to notice that $\kappa$ represents the image of $p^{m}(\xi)$ under the connecting homomorphism \erf{7}, and is thus a \v Cech-representative of the multiplicative class of $\xi$. Hence $\mathrm{MC}(\xi)$ and $\Omega(\xi)$ correspond to each other under the extended de Rham isomorphisms.
\end{proof}

As a consequence of Proposition \ref{prop9} we see that any pair $\Omega(\xi)$ of differential forms is contained in   subspaces 
\begin{equation*}
M_{\Z}^{n+1}(G) \subset M_{\R}^{n+1}(G) \subset \Omega^{n+1}(G) \oplus \Omega^n(G^2)\text{,}
\end{equation*}
where $M_{\R}^{n+1}(G)$ consists of those pairs $(H,\rho)$ which satisfy the cocycle conditions (\ref{42}), and $M_{\Z}^{n+1}$ consists of those whose associated  class in $H^{n+2}(\Omega G)$ lies in the image of the integral cohomology under \erf{33}. 

In preparation of the next result it will be useful to consider the following chain map of complexes of sheaves
\begin{equation}
\label{43}
\iota: \ueins \hookrightarrow \mathcal{D}_{\mathrm{d}}^{\bullet}(n): g \mapsto (g,0,...,0)\text{.}
\end{equation}
Here, the  sheaf  of locally constant $\ueins$-valued functions is considered as a  sheaf complex concentrated in degree zero, and $\mathcal{D}_{\mathrm{d}}^{\bullet}(n)$ denotes the Deligne complex  in which the last sheaf $\underline{\Omega}^n$ is replaced by the sheaf $\underline{\Omega}{}^n_{d}$ of \emph{closed} $n$-forms. We recall that $\iota$ is a quasi-isomorphism \cite{brylinski1}, i.e. it induces isomorphisms
\begin{equation}
\label{19}
H^m(M,\ueins) \cong H^m(M,\mathcal{D}_{\mathrm{d}}(n))\text{.}
\end{equation}
This follows from the Poincaré-Lemma; the same is  true in the simplicial context, as we shall see next. We introduce the complex
\begin{equation*}
\mathrm{Del}_{\Delta}^{m}(\mathfrak{U},n)_{\mathrm{d}} :=\bigoplus_{m=p+q+k} \check C^p(\mathscr{U}_q,\mathcal{D}_{\mathrm{d}}^k(n))\text{.}
\end{equation*}

\begin{lemma}
\label{lem1}
For all $m\geq 0$ and $n>0$, the inclusion
\begin{equation*}
\mathrm{Tot}^{m}_{\Delta}(\mathfrak{U},\ueins) \hookrightarrow  \mathrm{Del}_{\Delta}^{m}(\mathfrak{U},n)_{\mathrm{d}}
\end{equation*}
is a quasi-isomorphism, i.e. it induces isomorphisms
\begin{equation*}
H^m(BG,\ueins) \cong H^{m}(BG,\mathcal{D}_{\mathrm{d}}(n))\text{.}
\end{equation*}
\end{lemma}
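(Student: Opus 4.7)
My plan is a spectral-sequence comparison based on the filtration by simplicial degree $q$. The target $\mathrm{Del}_{\Delta}^{\bullet}(\mathfrak{U},n)_{\mathrm{d}}$ is the total complex of the tricomplex $\check C^p(\mathscr{U}_q,\mathcal{D}^k_{\mathrm{d}}(n))$ in $(p,q,k)$, and the source $\mathrm{Tot}^{\bullet}_\Delta(\mathfrak{U},\ueins)$ is the total complex of the bicomplex $\check C^p(\mathscr{U}_q,\ueins)$ in $(p,q)$. Because the inclusion places its image at $k=0$, it preserves the filtration by $q$ and induces a morphism of the two associated spectral sequences.

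First I would pass to the $E_1$-page by taking cohomology along the $(p,k)$-direction at fixed $q$. For the source this yields $\check H^p(\mathscr{U}_q,\ueins)$; for the target it yields the Deligne cohomology of $\mathscr{U}_q$ with values in $\mathcal{D}_{\mathrm{d}}(n)$. Passing to the direct limit over refinements of $\mathscr{U}_q$ identifies these with the sheaf cohomology $H^p(G^q,\ueins)$ and with $H^p(G^q,\mathcal{D}_{\mathrm{d}}(n))$, respectively. The induced map on each column is the non-simplicial inclusion $\iota$ from \erf{43}, which is an isomorphism by the classical quasi-isomorphism \erf{19}. The hypothesis $n>0$ enters here, since for $n>0$ the Poincar\'e lemma gives $\iota$ the structure of a genuine resolution on contractible opens.

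Second, the $d_1$-differentials on both spectral sequences are induced by the simplicial alternating sum $\Delta$ from \erf{18}, which is natural in the sheaf coefficients. The comparison therefore commutes with $d_1$ and is already an isomorphism of $E_1$-complexes. Since the filtrations are bounded below and convergent, this propagates through $E_r$ for all $r$ and yields an isomorphism of $E_\infty$-pages, hence of the total cohomology groups; passing to the direct limit in \erf{9} and its analogue for $\ueins$ gives the claimed isomorphism.

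The main obstacle I anticipate is the compatibility of the spectral sequence formation with the direct limit over sequences $\mathfrak{U}$ of open covers. One needs the refinements along which one takes the limit to refine each $\mathscr{U}_q$ cofinally and in a simplicially compatible way, so that the column-wise limit indeed computes the stated sheaf and Deligne cohomology groups. This is handled by the construction of refinable simplicially compatible sequences of covers in Section~4 of \cite{tu1}; once this is in place, the argument reduces to applying \erf{19} column-by-column.
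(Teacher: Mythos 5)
Your argument is correct, but it proceeds along a genuinely different route from the paper's. You filter both complexes by the simplicial degree $q$ and compare the resulting spectral sequences: on each column the map is the non-simplicial inclusion $\iota$ of \erf{43}, whose effect on cohomology is the classical quasi-isomorphism \erf{19}, and the comparison theorem (with the bounded, exhaustive filtration $1\leq q\leq m$) then propagates the $E_1$-isomorphism to the abutment. The paper instead argues entirely at the cochain level: it factors the inclusion through the chain of inclusions $\mathrm{Tot}^{\bullet}_{\Delta}(\mathfrak{U},\ueins)\hookrightarrow \mathrm{Del}^{\bullet}_{\Delta}(\mathfrak{U},1)_{\mathrm{d}}\hookrightarrow\cdots\hookrightarrow \mathrm{Del}^{\bullet}_{\Delta}(\mathfrak{U},n)_{\mathrm{d}}$ and shows each step is a quasi-isomorphism by explicitly modifying a given cocycle by a coboundary $\mathrm{D}_{\Delta}(\eta)$, where $\eta$ is a Poincar\'e-lemma primitive of the top-degree form, working inductively over the covers $\mathscr{U}_q$ so that each correction only disturbs forms supported on $\mathscr{U}_{q+1}$. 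Your approach buys economy and conceptual clarity: it reuses the already-established fact \erf{19} column by column rather than redoing the Poincar\'e-lemma manipulations inside the simplicial total complex, at the cost of invoking spectral-sequence machinery and having to verify that the direct limit over simplicially compatible refinements (the point you correctly flag, handled by the construction in Section~4 of \cite{tu1}, e.g.\ by passing to a cofinal system of good covers) is compatible with the identification of the $E_1$-columns with $H^p(G^q,\ueins)$ and $H^p(G^q,\mathcal{D}_{\mathrm{d}}(n))$. The paper's argument is more elementary and constructive -- it produces an explicit equivalent cocycle with no $n$-forms -- which is also what makes the surjectivity statement "obviously an inverse to the inclusion" transparent there. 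Both proofs are sound; yours is a legitimate alternative.
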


\begin{proof}
We show that for $n\geq 1$ the inclusions
\begin{equation*}
\mathrm{Del}_{\Delta}^{m}(\mathfrak{U},n-1)_{\mathrm{d}} \incl \mathrm{Del}_{\Delta}^{m}(\mathfrak{U},n)_{\mathrm{d}}
\quad\text{ and }\quad
\mathrm{Tot}^{m}_{\Delta}(\mathfrak{U},\ueins) \incl \mathrm{Del}_{\Delta}^{m}(\mathfrak{U},1)_{\mathrm{d}}
\end{equation*}
are quasi-isomorphisms. Then the claim follows by using the first quasi-isomorphism $(n-1)$ times and then the second one time. 

To see that the first inclusion is a quasi-isomorphism, suppose 
\begin{equation*}
\xi=(\xi_1,...,\xi_m)\in \mathrm{Del}_{\Delta}^{m}(\mathfrak{U},n)_{\mathrm{d}}
\end{equation*}
is a simplicial Deligne cocycle with a Deligne cocycle $\xi_m  \in \mathrm{Del}^{m-1}(\mathscr{U}_1,n)$ on top. If $m\leq n$, $\xi$ contains no $n$-forms and (if at all) only a closed $(n-1)$-form; it is thus also a cocycle in $\mathrm{Del}_{\Delta}^{m}(\mathfrak{U},n-1)_{\mathrm{d}}$. If $m>n$, we have $\xi_m=(\omega_1,...,\omega_m)$ with an $n$-form $\omega_m \in \check C^{m-n-2}(\mathscr{U}_1,\underline{\Omega}^n)$. Assuming a good cover $\mathscr{U}_1$, we can choose $\eta\in\check C^{m-n-2}(\mathscr{U}_1,\underline{\Omega}^{n-1})$ such that $(-1)^{m-n}\mathrm{d}\eta=\omega_m$. 
We can then regard $\eta$ as a cochain in $\mathrm{Del}_{\Delta}^{m-1}(\mathfrak{U},n)_{\mathrm{d}}$, and obtain a new cocycle
\begin{equation*}
\xi' := \xi +  \mathrm{D}_{\Delta}(\eta)\text{,}
\end{equation*}  
which has by construction no $n$-forms supported on the cover $\mathscr{U}_1$, and since it is still a cocycle, only a \emph{closed} $(n-1)$-form.  Now we can repeat this process inductively over all $\mathscr{U}_q$, since the changes there only affect $n$-forms supported on  $\mathscr{U}_{q+1}$. We have then associated to any simplicial Deligne cocycle in $\mathrm{Del}_{\Delta}^{m}(\mathfrak{U},n)_{\mathrm{d}}$ an equivalent one in $\mathrm{Del}_{\Delta}^{m}(\mathfrak{U},n-1)_{\mathrm{d}}$.
On cohomology classes, this is obviously an inverse to the inclusion.

The proof that the second inclusion is a quasi-isomorphism is similar, just that one has to use $\ueins$-valued functions instead of $0$-forms. For such a function, being closed is the same as being locally constant, which is the claim. 
\end{proof}

We also need the following technical Lemma that relates the extended de Rham isomorphism to the quasi-isomorphism  we have just discussed.

\begin{lemma}
\label{lem2}
The composite
\begin{equation*}
\alxydim{@C=0.9cm}{
M_{\R}^{n+1}(G) \ar[r] &  H^{n+2}(BG,\R) \ar[r] & H^{n+2}(BG,\ueins) \ar[r] & H^{n+2}(BG,\mathcal{D}_{\mathrm{d}}(n))}
\end{equation*}
sends a pair $(H,\rho)\in M_{\R}^{n+1}(G)$ to a simplicial Deligne class represented by a cochain of the form $(0,...0,\xi_n,\xi_{n+1})$. Moreover, with $B \in \check C^0(\mathscr{U}_1,\underline{\Omega}^n)$ such that $\mathrm{d}B=H$, one can take
\begin{equation*}
\xi_n=(1,0,...,0,\Delta B + \rho)
\quad\text{ and }\quad
\xi_{n+1}=(1,0,...,0,-\delta B) \text{.}
\end{equation*}
\end{lemma}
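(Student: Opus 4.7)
The plan is to work inside the triple complex $\check C^p(\mathscr{U}_q,\underline{\Omega}^k)$ appearing in the proof of Proposition \ref{prop9}. It receives two natural quasi-isomorphisms: the inclusion $j_{\mathrm{dR}}$ of the simplicial de Rham complex $\Omega^\bullet(G^\bullet)$ (forms as constant Čech $0$-cochains), which implements the extended de Rham isomorphism with $H^\bullet(BG,\R)$; and the inclusion $j_{\R}$ of $\mathrm{Tot}^\bullet_\Delta(\mathfrak{U},\R)$ via $\R\hookrightarrow \underline{\Omega}^0$. From that same proof we also have the chain map $\Phi$ that exponentiates the $\underline{\Omega}^0$-slot and truncates above degree $n$, landing in $\mathrm{Del}^\bullet_\Delta(\mathfrak{U},n)_{\mathrm{d}}$. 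Since $\Phi\circ j_{\R}$ is exactly the cochain-level realization of $\R\to \ueinssheaf \xrightarrow{\iota} \mathcal{D}^\bullet_{\mathrm{d}}(n)$, it models the composite $H^{n+2}(BG,\R)\to H^{n+2}(BG,\ueins)\to H^{n+2}(BG,\mathcal{D}_{\mathrm{d}}(n))$. The strategy is therefore to modify $j_{\mathrm{dR}}(H+\rho)$ by a triple-complex coboundary until it lies entirely in degrees $k\leq n$, and then to read off $\Phi$ of the result.

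To perform this descent I would invoke the Poincaré lemma on a good cover $\mathscr{U}_1$ to choose $B \in \check C^0(\mathscr{U}_1,\underline{\Omega}^n)$ with $\mathrm{d}B = H$, and compute the total coboundary of $B$ using the signs dictated by \erf{58} and \erf{52}. It breaks into three pieces: a $\delta B$-piece at multi-index $(p,q,k)=(1,1,n)$, a $\mathrm{d}B = H$-piece at $(0,1,n+1)$, and a $\Delta B$-piece at $(0,2,n)$. Adding this coboundary (with the appropriate sign) to $j_{\mathrm{dR}}(H+\rho)$ cancels the $k=n+1$ component entirely, leaving a cocycle supported solely in the row $k=n$, with components $-\delta B$ at $(1,1,n)$ and $\Delta B + \rho$ at $(0,2,n)$.

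Applying $\Phi$ to this descended cocycle leaves the $k=n$ entries untouched, while inserting $1 = \exp(2\pi\im\cdot 0)$ into the $\ueinssheaf$-slots wherever the $\underline{\Omega}^0$-component was zero. This produces precisely the claimed cochain $(0,\ldots,0,\xi_n,\xi_{n+1})$ with $\xi_n = (1,0,\ldots,0,\Delta B+\rho)$ and $\xi_{n+1} = (1,0,\ldots,0,-\delta B)$. Closedness in the top slot of $\mathcal{D}^\bullet_{\mathrm{d}}(n)$ is automatic from the defining equations \erf{42} of $M_{\R}^{n+1}(G)$: $\mathrm{d}(\Delta B+\rho)$ vanishes by $\Delta H = \mathrm{d}\rho$, and $\mathrm{d}(\delta B) = \delta H = 0$ since $H$ is globally defined on $G$.

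The principal obstacle throughout is the sign bookkeeping among $\delta$, $\Delta$ and $\mathrm{d}$ in the triple complex; these signs, propagated from \erf{58} and \erf{52}, are exactly what pin down the specific combinations $\Delta B + \rho$ and $-\delta B$ stated in the lemma, rather than the sign-reversed alternatives.
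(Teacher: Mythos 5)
Your overall strategy---descend $j_{\mathrm{dR}}(H+\rho)$ inside the triple complex $\check C^p(\mathscr{U}_q,\underline{\Omega}^k)$ using a local primitive $B$ of $H$, then exponentiate and truncate---is the same in spirit as the paper's, and the cochain you end up with is the right one. But the step ``read off $\Phi$ of the descended cocycle'' does not yet prove the lemma. What your argument establishes is that $\Phi(c)$, for $c$ the descended cocycle, is cohomologous to $\iota(\mathrm{e}^{2\pi\im t_0},\ldots,\mathrm{e}^{2\pi\im t_{n+1}})$ \emph{in the truncated complex} $\mathrm{Del}^{\bullet}_{\Delta}(\mathfrak{U},n)$, because that is where your chain map $\Phi$ lands. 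The lemma asserts an identity of classes in $H^{n+2}(BG,\mathcal{D}_{\mathrm{d}}(n))$, computed by the subcomplex $\mathrm{Del}^{\bullet}_{\Delta}(\mathfrak{U},n)_{\mathrm{d}}$ of \emph{closed} top forms, and the inclusion of that subcomplex is very far from injective on $H^{n+2}$: indeed, the whole point of the connecting homomorphism in the sequence \erf{60} is that the classes Lemma \ref{lem2} computes typically become trivial once one passes to the larger complex. So an argument that only controls the class modulo coboundaries in $\mathrm{Del}^{\bullet}_{\Delta}(\mathfrak{U},n)$ cannot even distinguish the answer from zero. To close this you must exhibit the coboundary relating $\Phi(c)$ to $\iota(\mathrm{e}^{2\pi\im t_0},\ldots,\mathrm{e}^{2\pi\im t_{n+1}})$ and check that it is taken in $\mathrm{Del}^{\bullet}_{\Delta}(\mathfrak{U},n)_{\mathrm{d}}$; this forces you to introduce the real \v Cech representative $(t_0,\ldots,t_{n+1})$ and the primitive $\eta$ furnished by the extended de Rham isomorphism, as the paper does via \erf{44}. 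The relevant cochain is then $\Phi(\eta+B)$ (the paper's $\tilde\eta$ when $B=-\eta_1^n$), whose only top $n$-form $\eta_1^n+B$ is closed precisely because $\mathrm{d}\eta_1^n=-H=-\mathrm{d}B$; without $\eta$ in the picture you cannot state, let alone verify, this.

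There is also a concrete sign error at the place you yourself flag as decisive. From $\mathrm{d}B=H$ and $\Delta H=\mathrm{d}\rho$ one gets $\mathrm{d}(\Delta B+\rho)=\Delta H+\mathrm{d}\rho=2\Delta H$, which does not vanish; with the conventions of \erf{42} as you quote them, the combination that is closed is $\Delta B-\rho$, so your bookkeeping would actually output the wrong $\xi_n$. The closedness of $\Delta B+\rho$ really comes from the relation $\rho-\Delta\eta_1^{n}=\mathrm{d}\eta_2^{n-1}$ extracted from \erf{44}, i.e.\ from the sign conventions of the triple complex (where the relevant cocycle identity reads $\Delta H+\mathrm{d}\rho=0$), which is one more reason the primitive $\eta$ cannot be bypassed.
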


\begin{proof}
The cocycle $(H,\rho,0,...,0)$ associated to  $(H,\rho)\in M_{\R}^{n+1}(G)$ corresponds under the extended de Rham isomorphism to a cocycle $(t_{0},...,t_{n+1})$ in $\mathrm{Tot}_{\Delta}^{n+2}(\mathfrak{U},\R)$, i.e. there exists a cochain $\eta$ in the triple complex $\check C^{p}(\mathscr{U}_q,\underline{\Omega}^k)$ with
\begin{equation}
\label{44}
 (t_{0},...,t_{n+1}) + \mathrm{D}_{\Omega}(\eta) = (H,\rho,0,...,0)\text{.}
\end{equation}
Notice that $\iota(\mathrm{e}^{2\pi\im t_0},...,\mathrm{e}^{2\pi\im t_{n+1}})$ is the image of $(H,\rho)$ under the composite we are about to understand. 
The cochain $\eta$ is a collection $\eta_q^k \in \check C^{n+1-q-k}(\mathscr{U}_q,\underline{\Omega}^k)$ and among the relations implied by \erf{44} are 
\begin{equation*}
-\mathrm{d}\eta_1^{n} = H
\quad\text{ and }\quad
\Delta \eta_1^{n} + \mathrm{d}\eta_2^{n-1}=\rho\text{.}
\end{equation*}
Now denote by $\tilde\eta$ the same cochain but with all $\underline{\Omega}^0$-valued parts exponentiated and $\eta_1^n := 0$. Then $\tilde\eta$ is in $\mathrm{Del}_{\Delta}^{n+1}(\mathfrak{U},n)_{\mathrm{d}}$, and
\begin{equation*}
\iota(\mathrm{e}^{2\pi\im t_0},...,\mathrm{e}^{2\pi\im t_{n+1}}) + \mathrm{D}_{\Delta}(\tilde\eta) \in \mathrm{Del}_{\Delta}^{n+2}(\mathfrak{U},n)_{\mathrm{d}} \end{equation*}
is the claimed cocycle $(0,...,0,\xi_n,\xi_{n+1})$ with $B:=-\eta_1^n$.
\end{proof}

Now we are prepared to prove our second result on $H^m(BG,\mathcal{D}(n)^{bi})$. 

\begin{proposition}
\label{prop3}
The inclusion $\iota$ induces isomorphisms
\begin{equation*}
H^m(BG,\ueins) \cong H^m(BG,\mathcal{D}^{bi}(n))
\end{equation*}
for $m \leq n$. For $m=n+1$ the induced map fits into an exact sequence
\begin{equation*}
\alxydim{@C=0.8cm}{0 \ar[r] & H^{n+1}(BG,\ueins) \ar[r]^-{\iota^{*}} & H^{n+1}(BG,\mathcal{D}^{bi}(n)) \ar[r] & M_{\Z}^{n+1}(G) \ar[r] & 0\text{.}}
\end{equation*}
\end{proposition}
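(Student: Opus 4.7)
The plan is to compare the complex $\mathrm{Del}^{\bullet}_{\Delta}(\mathfrak{U},n)^{bi}$ with the closed-top-form version $\mathrm{Del}^{\bullet}_{\Delta}(\mathfrak{U},n)_{\mathrm{d}}$, whose cohomology has already been identified with $H^{m}(BG,\ueins)$ by Lemma \ref{lem1}. Concretely, I would factor the inclusion $\iota$ of the proposition as
\begin{equation*}
\underline{\ueins} \longrightarrow \mathcal{D}_{\mathrm{d}}^{\bullet}(n) \stackrel{j}{\longrightarrow} \mathcal{D}^{\bullet}(n)^{bi},
\end{equation*}
where the first map is the quasi-isomorphism $\iota$ of \erf{43} and $j$ embeds the $\mathrm{d}$-complex as the first summand of the $bi$-complex, placing $0$ in the additional $\Omega^{n}_{\Delta}(G^{2})$ slot. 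The map $j$ is a chain map because $\mathrm{D}^{bi}_{\Delta}$ differs from $\mathrm{D}_{\Delta}$ only on this extra summand, which is trivial on the image of $j$.

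For the first half of the statement, a simple bookkeeping observation shows that the sheaf $\underline{\Omega}^{n}$ contributes to $\mathrm{Del}^{j}(\mathscr{U}_{q},n)$ only when $j\geq n$, and hence to $\mathrm{Del}^{m}_{\Delta}(\mathfrak{U},n)$ only when $m\geq n+1$. Consequently the three complexes $\mathrm{Del}_{\mathrm{d}}$, $\mathrm{Del}$, and $\mathrm{Del}^{bi}$ coincide strictly for $m\leq n$, so $j$ is an isomorphism of cochain groups there and therefore also on cohomology. For injectivity of $\iota^{*}$ in degree $n+1$: if $\xi\in \mathrm{Del}^{n+1}_{\Delta}(\mathfrak{U},n)_{\mathrm{d}}$ is a cocycle with $j(\xi)=\mathrm{D}^{bi}_{\Delta}\tilde\eta$, then $\tilde\eta$ lives in degree $n$ and therefore has no extra summand and no top $n$-form, so $\mathrm{D}^{bi}_{\Delta}\tilde\eta=\mathrm{D}_{\Delta}\tilde\eta$; its top-form components are exterior derivatives of $(n{-}1)$-forms and are therefore closed, placing $\mathrm{D}_{\Delta}\tilde\eta$ in $\mathrm{Del}_{\mathrm{d}}^{n+1}$. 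Hence $\xi$ is already a $\mathrm{Del}_{\mathrm{d}}$-coboundary. Exactness in the middle of the sequence is then immediate: a $bi$-cocycle $(\xi,\rho)$ with $\Omega(\xi,\rho)=0$ has $\rho=0$ and closed top form, so $(\xi,\rho)=j(\xi)$.

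The main obstacle is surjectivity of $\Omega$ onto $M^{n+1}_{\Z}(G)$. Given $(H,\rho)\in M^{n+1}_{\Z}(G)$, I would pass to a sequence of good open covers, choose $B\in \check C^{0}(\mathscr{U}_{1},\underline{\Omega}^{n})$ with $\mathrm{d}B=H$, and consider the cochain $c\in\mathrm{Del}^{n+1}_{\Delta}(\mathfrak{U},n)^{bi}$ whose only nonzero Deligne entry is $B$ in $\check C^{0}(\mathscr{U}_{1},\underline{\Omega}^{n})$ and whose extra summand is $\rho$. A direct computation using \erf{52} and the formula for $\mathrm{D}^{bi}_{\Delta}|_{\Omega^{n}_{\Delta}(G^{2})}$ yields top-form components $-\delta B$ and $\Delta B-\rho$ for $\mathrm{D}^{bi}_{\Delta}c$, both closed thanks to $\mathrm{d}H=0$ and $\Delta H=\mathrm{d}\rho$ from \erf{42}; thus $\mathrm{D}^{bi}_{\Delta}c$ already lies in $\mathrm{Del}^{n+2}_{\Delta}(\mathfrak{U},n)_{\mathrm{d}}$ and coincides, up to sign conventions, with the cocycle produced by Lemma \ref{lem2}. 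By Lemma \ref{lem2} this cocycle represents the image of $(H,\rho)$ in $H^{n+2}(BG,\mathcal{D}_{\mathrm{d}}(n))\cong H^{n+2}(BG,\ueins)$, which vanishes for integral $(H,\rho)$ by exactness of $0\to\Z\to\R\to\ueins\to 0$. Hence $\mathrm{D}^{bi}_{\Delta}c=\mathrm{D}_{\Delta}\eta$ for some $\eta\in\mathrm{Del}^{n+1}_{\Delta}(\mathfrak{U},n)_{\mathrm{d}}$, and $c-j(\eta)$ is the sought $\mathcal{D}^{bi}(n)$-cocycle; its $\Omega$-image is still $(H,\rho)$ because $j(\eta)$ contributes a closed top form and zero extra summand.
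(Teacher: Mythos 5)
Your argument is correct and is essentially the paper's own proof: the paper packages your comparison of $\mathrm{Del}^{\bullet}_{\Delta}(\mathfrak{U},n)^{bi}$ with $\mathrm{Del}^{\bullet}_{\Delta}(\mathfrak{U},n)_{\mathrm{d}}$ as a short exact sequence of complexes with quotient $\Omega^{\bullet}(\mathfrak{U},n)$, whose cohomology vanishes in degrees $\leq n$ and equals $M^{n+1}_{\R}(G)$ in degree $n+1$, and then reads the statement off the long exact sequence after identifying the connecting homomorphism with the composite of Lemma \ref{lem2} --- which is precisely the cochain-level computation you perform by hand (your $c=(B,\rho)$ is the lift of $(H,\rho)$ and $\mathrm{D}^{bi}_{\Delta}c$ is the connecting map, the same sign ambiguity included). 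The only point you leave implicit is that $\Omega$ actually takes values in $M^{n+1}_{\Z}(G)$ rather than merely $M^{n+1}_{\R}(G)$, which is supplied by Proposition \ref{prop9} (and in the paper's formulation falls out of the same long exact sequence).
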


\begin{proof}
For a sequence $\mathfrak{U}$ of open covers $\mathscr{U}_q$ of $G^{q}$ as used before, we define the following complex $\Omega^{\bullet}(\mathfrak{U},n)$. We put $\Omega^m(\mathfrak{U},n)=0$ for $m \leq n$,
\begin{equation*}
\Omega^{n+1}(\mathfrak{U},n) := \check C^0(\mathscr{U}_1,\underline{\Omega}_{\mathrm{d}}^{n+1}) \oplus \Omega^{n}(G^2)
\end{equation*}
and
\begin{equation*}
\Omega^{m}(\mathfrak{U},n) := \bigoplus_{m-n=p+q} \check C^{p}(\mathscr{U}_q,\underline{\Omega}{}_{\mathrm{d}}^{n+1})
\end{equation*}
for $m> n+1$. The differential is on a summand $\check C^{p}(\mathscr{U}_q,\underline{\Omega}{}_{\mathrm{d}}^{n+1})$ given by $(-1)^{q}\delta + \Delta$, and on the summand $\Omega^{n}(G^2)$ by $-\mathrm{d}: \Omega^{n}(G^2) \to \check C^0(\mathscr{U}_2,\underline{\Omega}^{n+1}_\mathrm{d})$. 

The complex $\Omega^{\bullet}(\mathfrak{U},n)$ satisfies two purposes. The first is that the cohomology of $\Omega^{\bullet}(\mathfrak{U},n)$ is trivial in degree $m \leq n$ and  is $M_{\R}^{n+1}(G)$
in degree $n+1$. The second is that we have a chain map
\begin{equation}
\label{3}
\mathrm{d}: \mathrm{Del}_{\Delta}^{\bullet}(\mathfrak{U},n)^{bi} \to \Omega^{\bullet}(\mathfrak{U},n) \end{equation} 
which is the derivative on all $n$-forms that occur in $\mathrm{Del}_{\Delta}^{\bullet}(\mathfrak{U},n)^{bi}$, and the identity on the additional summand $\Omega^n(G^2)$ from \erf{15}. We may assume that all open covers $\mathscr{U}_q$ are such that the Poincaré Lemma is true. Then, the chain map \erf{3} is surjective, and its kernel is precisely the complex $\mathrm{Del}_{\Delta}^{\bullet}(\mathfrak{U},n)_{\mathrm{d}}$. Summarizing,
\begin{equation}
\label{60}
\alxydim{}{0 \ar[r] & \mathrm{Del}_{\Delta}^{\bullet}(\mathfrak{U},n)_{\mathrm{d}} \ar[r] & \mathrm{Del}_{\Delta}^{\bullet}(\mathfrak{U},n)^{bi} \ar[r]^-{\mathrm{d}} & \Omega^{\bullet}(\mathfrak{U},n) \ar[r] & 0}
\end{equation}  
is an exact sequence of complexes.

Due to the vanishing of the cohomology of $\Omega^{\bullet}(\mathfrak{U},n)$ in degree $m\leq n$, the long exact sequence in cohomology induced by \erf{60} splits into isomorphisms
\begin{equation*}
H^{m}(BG,\mathcal{D}_{\mathrm{d}}(n)) \cong H^{m}(BG,\mathcal{D}^{bi}(n))\text{,} \end{equation*}
which show together with Lemma \ref{lem1} the first claim. It remains an exact sequence
\begin{equation*}
\alxydim{@C=0.7cm@R=0.8cm}{0 \ar[r] & H^{n+1}(BG,\ueins) \ar[r]
& H^{n+1}(BG,\mathcal{D}^{bi}(n))   \ar[r]
& M_{\R}^{n+1}(G) \ar `r `[0,0] `^d[0,-3]  `[1,-2] [1,-2] &  \\
& H^{n+2}(BG,\mathcal{D}_{\mathrm{d}}(n)) \ar[r]
&  \hdots
&  
}
\end{equation*}
whose connecting homomorphism is precisely the homomorphism described in Lemma \ref{lem2}. Thus, it factors as
\begin{equation}
\label{31}
\alxydim{@C=0.8cm}{H^{n+2}(BG,\R) \ar[r] & H^{n+2}(BG,\ueins) \ar[r] & H^{n+2}(BG,\mathcal{D}_{\mathrm{d}}(n))\text{.}}
\end{equation} 
The  last arrow is an isomorphism by Lemma \ref{lem1}; hence, any element $(H,\rho)\in M_{\R}^{n+1}(G)$ which lies in the kernel of \erf{31} must already lie in the kernel of $H^{n+1}(BG,\R) \to H^{n+2}(BG,\ueins)$, and thus in the image of $H^{n+2}(BG,\Z)$. This means, by definition, $(H,\rho)\in M_{\Z}^{n+1}(G)$, which proves the second claim.
\end{proof}

\begin{corollary}
\label{co1}
For $G$ compact, simple and simply-connected, there exists a (up to isomorphism) unique multiplicative bundle gerbe with connection for every element in $M_\Z^3(G)$.
\end{corollary}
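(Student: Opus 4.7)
My plan is to deduce the corollary from the exact sequence of Proposition \ref{prop3} at $n=2$ combined with the classification of Proposition \ref{prop8}. Specializing yields
$$0 \to H^3(BG,\ueins) \to H^3(BG,\mathcal{D}^{bi}(2)) \to M_{\Z}^3(G) \to 0,$$
and Proposition \ref{prop8} identifies the middle group with the set of isomorphism classes of multiplicative bundle gerbes with connection over $G$. Surjectivity of the right-hand arrow already provides, for every prescribed $(H,\rho)\in M_{\Z}^3(G)$, a multiplicative bundle gerbe with connection realizing these invariants; uniqueness of such an isomorphism class over a fixed $(H,\rho)$ amounts to the statement that the left-hand term vanishes. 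Thus the whole proof reduces to showing $H^3(BG,\ueins)=0$ under the hypotheses on $G$.

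The coefficient sheaf $\ueins$ here is the constant sheaf (rather than $\ueinssheaf$), as recorded in the quasi-isomorphism $\iota$ of Lemma \ref{lem1}, so $H^m(BG,\ueins)$ agrees with the ordinary singular cohomology of the topological classifying space $BG$ with values in $\ueins$ regarded as a discrete group. For $G$ compact, simple and simply-connected it is classical that $H^k(BG,\Z)=0$ for $k=1,2,3$, while $H^4(BG,\Z)\cong \Z$ is torsion-free and hence embeds into $H^4(BG,\R)$. Feeding these inputs into the long exact sequence induced by $0\to\Z\to\R\to\ueins\to 0$,
$$\cdots \to H^3(BG,\R) \to H^3(BG,\ueins) \to H^4(BG,\Z) \to H^4(BG,\R) \to \cdots,$$
the term $H^3(BG,\R)$ vanishes (since $H^3(BG,\Z)=0$ and $H^4(BG,\Z)$ is free) and the subsequent arrow is injective, forcing $H^3(BG,\ueins)=0$.

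The only point requiring care is the identification of the coefficient sheaf: one has to confirm that the $\ueins$ occurring in Proposition \ref{prop3} is the locally constant sheaf, which is exactly the content of Lemma \ref{lem1} and its underlying Poincar\'e-lemma argument. Beyond this bookkeeping there is no real obstacle. The existence half of the statement is already realized concretely by the canonical bundle gerbes $\mathcal{G}^k$ of Example \ref{ex4}, and the uniqueness half is now entirely cohomological.
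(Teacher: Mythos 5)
Your proposal is correct and follows essentially the same route as the paper: the short exact sequence of Proposition \ref{prop3} for $n=2$, combined with $H^4(BG,\Z)\cong\Z$, which forces $H^3(BG,\ueins)\cong\mathrm{Tor}\,H^4(BG,\Z)=0$. Your explicit unwinding via the exponential sequence is just a more detailed version of the paper's one-line computation, so there is nothing to add or correct.
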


This follows from Proposition \ref{prop3} and the identity $H^4(BG,\Z)\cong \Z$, which implies 
\begin{equation*}
H^3(BG,\ueins) \cong \mathrm{Tor}H^4(BG,\Z)  = 0\text{.}
\end{equation*}

We recall from Example \ref{ex4} that the canonical bundle gerbe $\mathcal{G}^k$ with connection of curvature $H_k$ over $G$ is multiplicative with a bimodule of curvature $k\rho$. In particular $(H_k,k\rho)\in M_\Z^3(G)$. Corollary \ref{co1} shows now that $\mathcal{G}^k$ is multiplicative in a unique way.

In preparation of the third result on the groups $H^{m}(BG,\mathcal{D}(n)^{bi})$ we continue the discussion of the multiplicative class, i.e. the group homomorphism
\begin{equation*}
\mch: H^m(BG,\mathcal{D}^{bi}(n)) \to H^{m+1}(BG,\Z)
\end{equation*}
induced by the chain map $p^{m}$ that projects onto all  components of $\mathrm{Del}^{m}_\Delta(\mathfrak{U},n)^{bi}$ which have values in the sheaf $\ueinssheaf$.  The kernel $\mathrm{ker}(p^{m})$ together with the restriction of $\mathrm{D}_{\Delta}$ is again a complex, so that
\begin{equation}
\label{20}
\alxydim{}{0 \ar[r] & \mathrm{ker}(p^{m}) \ar[r] & \mathrm{Del}^{m}_\Delta(\mathfrak{U},n)^{bi} \ar[r]^-{p^m} &}
\mathrm{Tot}^{m}_{\Delta}(\mathfrak{U},\ueinssheaf) 
\alxydim{}{ \ar[r] & 0}
\end{equation}
is an exact sequence of complexes. 

In the following we restrict our attention to the case $n=2$, which is relevant for multiplicative bundle gerbes with connection. If we denote the cohomology of $\mathrm{ker}(p^{\bullet})$ by $\mathcal{H}^m$, the interesting part of the long exact sequence induced by \erf{20} is:
\begin{equation}
\label{11}
\alxydim{@C=1cm@R=0.8cm}{&& \hdots \ar[r] &  H^2(BG,\ueinssheaf)    \ar `r  `[0,0]^{\omega} `^d[1,-3]  `[1,-2] [1,-2] &  \\
&\mathcal{H}^3 \ar[r]
& H^3(BG,\mathcal{D}^{bi}(2))   \ar[r]
& H^3(BG,\ueinssheaf)  \ar `r `[0,0] `^d[0,-3]  `[1,-2] [1,-2] &  \\
&\mathcal{H}^4 \ar[r]
&  \hdots
&  
}
\end{equation}
The following lemma contains straightforward calculations concerned with this long exact sequence.

\begin{lemma}
\label{lem4}
There are canonical isomorphisms 
\begin{eqnarray*}
\mathcal{H}^3 &\cong&  \big ( \Omega^2(G) \oplus \Omega^1_{\Delta}(G^2) \big ) \;/\;(-\mathrm{d} \oplus \Delta) \Omega^1(G)\\[0.2cm]
\mathcal{H}^4&\cong& \Omega^1_{\mathrm{d},\Delta}(G^3) \;/\; \Delta\Omega^1(G^2)\text{.}
\end{eqnarray*}
Under these identifications, the  connecting homomorphism $\omega$ 
of \erf{11} is given by the composite
\begin{equation*}
H^2(BG,\ueinssheaf) \to H^3(BG,\Z) \to H^3(BG,\R) \cong H^3(\Omega G) \to \mathcal{H}^3\text{,}
\end{equation*}
where the last arrow is the map
\begin{equation*}
H^3(\Omega G) \ni [(\varphi_2,\varphi_1,\varphi_0)] \mapsto (\varphi_2,\varphi_1) \in \Omega^2(G) \oplus \Omega^1_{\Delta}(G^2)\text{.}
\end{equation*}
which is well-defined in the image of $H^2(BG,\ueinssheaf)$. 
The homomorphism $\mathcal{H}^3 \to H^3(BG,\mathcal{D}^{bi}(2))$ sends a pair $(\varphi,\psi)$ to the family $(\xi_0,\xi_1,\xi_2)$ of Deligne cochains $\xi_k \in \mathrm{Del}^k(\mathscr{U}_{3-k},2)$ given by
\begin{equation*}
\xi_0 := 1
\quad\text{, }\quad
\xi_1 :=(1,\psi)
\quad\text{ and }\quad
\xi_2 := (1,0,\varphi)
\end{equation*}
and to the additional 2-form $\rho := \mathrm{d}\psi + \Delta \varphi\in\Omega^2(G^2)$.
\end{lemma}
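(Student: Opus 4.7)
The proof is a direct computation in the kernel complex $\ker(p^\bullet)$. Working in the direct limit over sequences of good open covers $\mathfrak{U}$, the sheaves $\underline{\Omega}^k$ for $k\geq 1$ are fine, hence $\delta$-acyclic in positive \v Cech degree. Consequently the spectral sequence of $\ker(p^\bullet)$ filtered by \v Cech degree $p$ collapses at $E_1$, and $\mathcal{H}^\bullet$ may be computed from the much simpler bicomplex whose terms are $\Omega^k(G^q)$ for $q\geq 1$, $k\in\{1,2\}$, with vertical differential $(-1)^q\mathrm{d}$ and horizontal differential $\Delta$, augmented by the additional summand $\Omega^2_\Delta(G^2)$ in total degree $3$ whose differential sends $\rho$ to $-\rho \in \Omega^2(G^2)$.

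For $\mathcal{H}^3$, a cocycle in this reduced complex is a triple $(\varphi,\psi,\rho) \in \Omega^2(G) \oplus \Omega^1(G^2) \oplus \Omega^2_\Delta(G^2)$ satisfying $\rho = \Delta\varphi + \mathrm{d}\psi$ and $\Delta\psi = 0$; since $\rho$ is then determined, the cocycles are parameterized by pairs in $\Omega^2(G) \oplus \Omega^1_\Delta(G^2)$. Coboundaries descend from $\ker(p^2) = \Omega^1(G)$ as $(-\mathrm{d}B, \Delta B)$, giving the first isomorphism. For $\mathcal{H}^4$, a cocycle reduces to $(\omega,\tau) \in \Omega^2(G^2) \oplus \Omega^1(G^3)$ with $\Delta\omega = \mathrm{d}\tau$ and $\Delta\tau = 0$; the coboundary of $(\varphi,\psi,\rho) \in \ker(p^3)$ adds $(\Delta\varphi + \mathrm{d}\psi - \rho,\, \Delta\psi)$. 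Setting $\varphi=0$ and choosing $\psi$ with $\tau - \Delta\psi$ being $\mathrm{d}$-closed, the form $\rho := \mathrm{d}\psi - \omega$ is then automatically $\Delta$-closed (by the cocycle condition), and this reduces $(\omega,\tau)$ to the normal form $(0, \tau')$ with $\tau' \in \Omega^1_{\mathrm{d},\Delta}(G^3)$. Residual coboundaries that preserve the normal form come from $\psi$ with $\mathrm{d}\Delta\psi = 0$ and act on $\tau'$ by $\tau' \mapsto \tau' - \Delta\psi$; this yields the quotient claimed.

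The connecting homomorphism $\omega$ in \erf{11} is obtained via the snake lemma applied to \erf{20}: lift a class $[\xi] \in H^2(BG,\ueinssheaf)$ to a cochain $\tilde\xi \in \mathrm{Del}^2_\Delta(\mathfrak{U},2)^{bi}$ by choosing local logarithms of a representing cocycle; then $\omega(\xi) = [\mathrm{D}^{bi}_\Delta(\tilde\xi)] \in \mathcal{H}^3$. The differential-form output is in essence $\mathrm{dlog}$ of these logarithms, which identifies $\omega(\xi)$ with the integral image of $\xi$ under the connecting map of the exponential sequence; passing through $H^3(BG,\R) \cong H^3(\Omega G)$ and extracting the $(\varphi_2,\varphi_1)$ components of a representing simplicial de Rham cocycle then produces the stated composite, exactly in parallel with the argument of Lemma \ref{lem2}. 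The explicit formula for the last homomorphism $\mathcal{H}^3 \to H^3(BG,\mathcal{D}^{bi}(2))$ is verified by substituting $(\xi_0,\xi_1,\xi_2,\rho)$ with $\xi_0 = 1$, $\xi_1 = (1,\psi)$, $\xi_2 = (1,0,\varphi)$ and $\rho := \mathrm{d}\psi + \Delta\varphi$ into $\mathrm{D}^{bi}_\Delta$ and using $\Delta\psi = 0$ together with $\mathrm{d}\varphi = 0$ in the truncated Deligne complex.

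The only nontrivial point is the normalization in $\mathcal{H}^4$: the existence of $\psi \in \Omega^1(G^2)$ making $\tau - \Delta\psi$ closed is not algebraically automatic, but the cocycle relation $\mathrm{d}\tau = \Delta\omega$ places $\mathrm{d}\tau$ in the image of $\Delta$, and the acyclicity available in the direct limit over sequences of good covers supplies a primitive $\psi$ of the required form. This is the step that has to be carried out carefully; everything else amounts to unwinding the definitions of $\mathrm{D}^{bi}_\Delta$ and of the various chain maps between the Deligne, simplicial de Rham and \v Cech complexes involved.
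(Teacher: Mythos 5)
The paper offers no proof of this lemma (it is announced as a \quot{straightforward calculation}), so your argument has to stand on its own. Your overall strategy is the right one: since the sheaves $\underline{\Omega}^k$ with $k\geq 1$ are fine, the \v Cech direction of $\mathrm{ker}(p^{\bullet})$ collapses and $\mathcal{H}^{\bullet}$ is computed by the complex of global forms $\Omega^k(G^q)$, $k\in\{1,2\}$, augmented by the extra summand $\Omega^2_{\Delta}(G^2)$ in degree $3$. Your computation of $\mathcal{H}^3$, your identification of the connecting homomorphism via the snake lemma and $\mathrm{dlog}$ (parallel to Proposition \ref{prop9} and Lemma \ref{lem2}), and your verification of the explicit cocycle $(1,(1,\psi),(1,0,\varphi),\mathrm{d}\psi+\Delta\varphi)$ are all correct.

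The gap is exactly where you flag it, but your proposed fix does not work. To bring a cocycle $(\tau,\omega)$ with $\Delta\tau=0$ and $\mathrm{d}\tau=\Delta\omega$ into the normal form $(\tau',0)$ you need $\psi\in\Omega^1(G^2)$ with $\mathrm{d}(\tau-\Delta\psi)=0$, i.e.\ you need $\mathrm{d}\tau\in\Delta(\mathrm{d}\,\Omega^1(G^2))$, whereas the cocycle condition only gives $\mathrm{d}\tau\in\Delta\,\Omega^2(G^2)$ together with $\Delta(\mathrm{d}\tau)=0$. Your appeal to \quot{acyclicity available in the direct limit over sequences of good covers} is beside the point: after the fine-sheaf reduction everything in sight is a globally defined form on $G^2$ or $G^3$, and refining the covers $\mathscr{U}_q$ changes nothing. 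What is actually needed is an acyclicity statement for the \emph{simplicial} direction, i.e.\ for the complexes $(\Omega^q(G^{\bullet}),\Delta)$ — for instance Bott's result $H^p(\Omega^q)=0$ for $p\neq q$, which holds for compact $G$ and which the paper only invokes later, in the proof of Proposition \ref{prop1}. Without such an input your argument establishes only the injection
\begin{equation*}
\mathcal{H}^4 \;\cong\; \left\{\tau\in\Omega^1_{\Delta}(G^3)\;:\;\mathrm{d}\tau\in\Delta\,\Omega^2(G^2)\right\}/\;\Delta\Omega^1(G^2)\;\supseteq\;\Omega^1_{\mathrm{d},\Delta}(G^3)\;/\;\Delta\Omega^1(G^2)\text{,}
\end{equation*}
(the injectivity does follow from your normal-form analysis, since a cocycle $(0,\omega')$ forces $\Delta\omega'=0$ and is then killed by the extra summand $\Omega^2_{\Delta}(G^2)$), and the reverse inclusion is the step you must still prove, or else restrict to the situation — compact $G$, where both sides vanish — in which the lemma is actually used.
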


\

\begin{remark}
\label{re1}
The homomorphism $\mathcal{H}^3 \to H^3(BG,\mathcal{D}^{bi}(2))$ in the sequence (\ref{11}) has the following geometric counterpart under the bijection of Proposition \ref{prop8}: a pair $(\varphi,\psi)$ is mapped to the multiplicative bundle gerbe $(\mathcal{I}_{\varphi},\trivlin_{\psi},\id)$ constructed  in  Example \ref{ex1} (b). If $(\varphi,\psi)$ is changed to $(\varphi - \mathrm{d}\alpha,\psi + \Delta\alpha)$ by a 1-form $\alpha\in\Omega^1(G)$,  the corresponding multiplicative bundle gerbe is related to the previous one by the  multiplicative 1-isomorphism constructed in Example \ref{ex2}. 
\end{remark}

We denote by $\Omega^n_{\mathrm{d},\Delta,\Z}(G)$ the space of $n$-forms $\varphi$ on $G$ which are closed and  $\Delta$-closed, and whose class $(\varphi,0,...,0)$ in $H^{n+1}(\Omega G) \cong H^{n+1}(BG,\R)$ lies in the image of the integral cohomology $H^{n+1}(BG,\Z)$. Equivalently, the pair $(\varphi,0)$ lies in $M_{\Z}^n(G)$.

\begin{proposition}
\label{prop1}
For $G$ compact, the sequence 
\begin{equation*}
\alxydim{}{ 0  \ar[r] & \Omega^2_{\mathrm{d},\Delta,\Z}(G) \ar[r] & \Omega^2(G)\ar[r] & H^3(BG,\mathcal{D}^{bi}(2)) \ar[r]^-{\mch} & H^4(BG,\Z) \ar[r] & 0}
\end{equation*}
is exact, where the third arrow sends a 2-form $\varphi$ to the class of the  bundle gerbe $\mathcal{I}_{\varphi}$, which is  trivially multiplicative with 2-form $\rho = \Delta \varphi$.
\end{proposition}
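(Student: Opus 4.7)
The plan is to derive the proposition from the long exact sequence~(\ref{11}) via the identifications of Lemma~\ref{lem4} and Remark~\ref{re1}. By Remark~\ref{re1}, the assignment $\varphi \mapsto [\mathcal{I}_{\varphi}]$ factors as
\begin{equation*}
\Omega^2(G) \longrightarrow \mathcal{H}^3 \longrightarrow H^3(BG,\mathcal{D}^{bi}(2)),
\end{equation*}
where the first arrow sends $\varphi$ to the class $[\varphi,0]$ in the presentation $\mathcal{H}^3\cong(\Omega^2(G)\oplus\Omega^1_{\Delta}(G^2))/(-\mathrm{d}\oplus\Delta)\Omega^1(G)$. Hence it suffices to show, using compactness of $G$, that (i) the second arrow is injective, (ii) the map $\mch$ is surjective with kernel equal to the image of $\mathcal{H}^3$, and (iii) the first arrow is surjective with kernel $\Omega^2_{\mathrm{d},\Delta,\Z}(G)$.

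The compactness input for (i) and (ii) is twofold: Bott's theorem \cite{bott2} gives $H^k(BG,\underline{\R})=H^k_{\mathrm{ct}}(G,\R)=0$ for $k\geq 1$, and the singular cohomology $H^{\mathrm{odd}}(BG,\R)$ of the classifying space of a compact group vanishes. The former, applied to the exponential sequence on $BG$, yields isomorphisms $H^k(BG,\ueinssheaf)\cong H^{k+1}(BG,\Z)$ for $k\geq 1$, under which $\mch$ is identified with the projection $H^3(BG,\mathcal{D}^{bi}(2))\to H^3(BG,\ueinssheaf)$ from (\ref{11}). The latter makes the connecting homomorphism $\omega$ of (\ref{11}) vanish: by Lemma~\ref{lem4}, $\omega$ factors through $H^3(BG,\R)=0$. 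This proves (i). For (ii), it remains to show that the subsequent connecting map $H^3(BG,\ueinssheaf)\to\mathcal{H}^4$ vanishes. An analogue of Lemma~\ref{lem4} exhibits this map as the composite $H^3(BG,\ueinssheaf)\to H^4(BG,\Z)\to H^4(BG,\R)\cong H^4(\Omega G)\to\mathcal{H}^4$, with the final arrow extracting the $\Omega^1(G^3)$-component. By Bott--Shulman--Stasheff \cite{bott3}, every class in $H^4(BG,\R)$ for compact $G$ is represented by a simplicial de Rham cocycle supported in $\Omega^3(G)\oplus\Omega^2(G^2)$, hence with vanishing $\Omega^1(G^3)$-component.

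Claim (iii) reduces to two exactness statements for compact $G$, both obtained by Haar-measure averaging. First, $\Omega^1_{\Delta}(G^2)=\Delta\,\Omega^1(G)$: the standard averaging chain homotopy on the $\Delta$-complex produces, from any $\psi$ with $\Delta\psi=0$, an $\alpha\in\Omega^1(G)$ with $\Delta\alpha=\psi$. Given this, the map $\Omega^2(G)\to\mathcal{H}^3$ is surjective, because any class $[\varphi,\psi]$ equals $[\varphi-\mathrm{d}\alpha,0]$ upon choosing $\alpha$ with $\Delta\alpha=-\psi$; this identifies the kernel of the composite with $\{\mathrm{d}\alpha : \alpha\in\Omega^1_{\Delta}(G)\}$. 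Second, $\Omega^2_{\mathrm{d},\Delta,\Z}(G)=\mathrm{d}\,\Omega^1_{\Delta}(G)$: the integrality condition is vacuous for compact $G$ because $H^3(BG,\R)=0$, so the task is to show $\Omega^2_{\mathrm{d},\Delta}(G)=\mathrm{d}\,\Omega^1_{\Delta}(G)$. For $\varphi\in\Omega^2_{\mathrm{d},\Delta}(G)$, the simplicial de Rham cocycle $(\varphi,0,0)$ of total degree~$3$ bounds (again by $H^3(BG,\R)=0$), producing $\eta_1\in\Omega^1(G)$ and $\eta_2\in C^{\infty}(G^2)$ with $\mathrm{d}\eta_1=\varphi$, $\Delta\eta_1=\mathrm{d}\eta_2$ and $\Delta\eta_2=0$; invoking Bott's vanishing $H^2_{\mathrm{ct}}(G,\R)=0$ to write $\eta_2=\Delta f$ and replacing $\eta_1$ by $\eta_1-\mathrm{d}f$ yields the desired $\Delta$-closed primitive of $\varphi$. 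The main technical obstacle is precisely this last paragraph: the two averaging arguments (one solving $\Delta\alpha=\psi$, the other solving $\Delta f=\eta_2$) must be coordinated with the bookkeeping of the simplicial de Rham bicomplex, but each is a standard application of compact-group averaging.
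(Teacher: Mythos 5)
Your proposal is correct and follows essentially the same route as the paper: the long exact sequence \erf{11}, the identifications of Lemma \ref{lem4}, and the vanishing of $H^{m}_{\mathrm{ct}}(G,V)$ for $m>0$ and of $H^{\mathrm{odd}}(BG,\R)$ for compact $G$, together with the isomorphism $H^3(BG,\ueinssheaf)\cong H^4(BG,\Z)$. The only notable difference is organizational: for surjectivity of $\mch$ you argue via an unstated analogue of Lemma \ref{lem4} and a choice of representatives in $H^4(\Omega G)$, whereas the paper simply observes that Lemma \ref{lem4} and Bott's theorem give $\mathcal{H}^4=0$, which yields the same conclusion more directly; likewise your explicit verification that $\omega=0$ and that $\Omega^2_{\mathrm{d},\Delta,\Z}(G)=\mathrm{d}\,\Omega^1_{\Delta}(G)$ is a slightly more spelled-out version of the paper's identification of the kernel with the image of $\omega$.
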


\begin{proof}
We recall two important results on the cohomology of compact groups $G$. First, the homomorphism  (\ref{7}) is actually an isomorphism (\cite{brylinski3}, Prop. 1.5):
\begin{equation*}
H^3(BG,\ueinssheaf) \cong H^{4}(BG,\Z)\text{.}
\end{equation*}
The second result is due to Bott \cite{bott2}:  the cohomology of the complex
\begin{equation*}
\alxydim{}{\Omega^q(*) \ar[r]^{\Delta} & \Omega^q(G) \ar[r]^{\Delta} & \Omega^q(G^2) \ar[r]^{\Delta} & ...}
\end{equation*}
is given by $H^p(\Omega^q) = H^{p-q}_{\mathrm{ct}}(G,S^q\mathfrak{g}^{*})$, where $S^q\mathfrak{g}^{*}$ is the $q$-th symmetric power of $\mathfrak{g}^{*}$, considered as a $G$-module under the coadjoint action. Since
$H_{\mathrm{ct}}^m(G,V)=0$
for $m>0$ and $G$ compact \cite{stasheff3}, 
\
we have $H^p(\Omega^q) = 0$ for $p\neq q$. It follows that the identifications of Lemma \ref{lem4} simplify to
\begin{equation*}
\mathcal{H}^3\cong\Omega^2(G)\;/\;\mathrm{d}\Omega^1_{\Delta}(G)
\quad\text{ and }\quad
\mathcal{H}^4=0\text{.}
\end{equation*}
All what remains now is to compute the kernel of the map from $\Omega^2(G)$ to $H^3(BG,\mathcal{D}^{bi}(2))$. 

Suppose first that $\varphi \in \Omega^2_{\mathrm{d},\Delta,\Z}(G)$. By definition, there is a corresponding element in $H^3(BG,\Z)\cong H^2(BG,\ueinssheaf)$ which is mapped to $[(\varphi,0,0)] \in  H^3(\Omega G)$. According to Lemma \ref{lem4} it is  a preimage of $\varphi$ under the connecting homomorphism $\omega$. By the exactness of the sequence \erf{11}, it is hence in the kernel. Conversely, suppose $\varphi$ is in the kernel. Then, it must be in the image of $\omega$, which implies by Lemma \ref{lem4} that it comes from a class in $H^3(BG,\Z)$.
\end{proof}

Proposition \ref{prop1}  makes two important claims for multiplicative bundle gerbes with connection over compact Lie groups.  The first is that for any  class  $\xi \in H^4(BG,\Z)$ there exist multiplicative bundle gerbes with connection, whose multiplicative class is $\xi$. The second is that -- unlike  for multiplicative bundle gerbes without connection (see Proposition 5.2 in \cite{carey4}) -- there may be non-isomorphic choices.

\newcommand{\dw}{\tau}

We will derive one further result on the groups $H^{m}(BG,\mathcal{D}^{bi}(n))$.
We recall that for any Lie group $G$ and any abelian group $A$ there is a  homomorphism
\begin{equation}
\label{17}
\dw: H^m(BG,A) \to H^{m-1}(G,A)
\end{equation}
called \emph{transgression} and defined as follows. For a cocycle $\xi$ in the singular cochain complex of $BG$, the pullback $p^{*}\xi$ to the universal bundle $p:EG \to BG$ is -- since $EG$ is contractible -- a coboundary, say $p^{*}\xi = \mathrm{d}\beta$. For $\iota: G \hookrightarrow EG$  the canonical identification of $G$ with the fibre over the base point of $BG$ we have $\mathrm{d}(\iota^{*}\beta)=0$, so that $\iota^{*}\beta$ defines a class $\dw([\xi]) := [\iota^{*}\beta]$. This class is  independent of the choices of the representative $\xi$ and of  $\beta$. Classes in the image of the transgression homomorphism are called \textit{transgressive}.

It is interesting to rewrite the transgression homomorphism in terms of the simplicial model of $BG$ we have used before. For this purpose we consider the obvious projection
\begin{equation*}
v:\mathrm{Tot}^{m}_{\Delta}(\mathfrak{U},A) \to \check C^{m-1}(\mathscr{U}_1,A) \end{equation*}
from  the  complex that computes $H^m(BG,A)$ to the complex whose cohomology is $H^{m-1}(G,A)$.
Using the simplicial model for the universal bundle $G \hookrightarrow EG \to BG$ one can explicitly check
\begin{lemma}
\label{lem3}
$\tau = v^{*}$.
\end{lemma}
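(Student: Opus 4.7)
The plan is to realize the transgression explicitly via a simplicial model of the universal bundle $G \hookrightarrow EG \to BG$ and an associated chain contraction. Concretely, I take $EG_\bullet$ to be the standard simplicial model with $EG_q = G^{q+1}$, equipped with an extra degeneracy $s_{-1}: EG_q \to EG_{q+1}$, $(g_0,\ldots,g_q) \mapsto (e,g_0,\ldots,g_q)$, and a projection $p: EG_\bullet \to BG_\bullet$ of simplicial manifolds realizing $G = EG_0$ as the fibre over the basepoint of $BG_0$; the map $\iota: G \hookrightarrow EG$ is the inclusion of this fibre.

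After choosing a sequence $\tilde{\mathfrak{U}} = \{\tilde{\mathscr{U}}_q\}$ of open covers of the $EG_q$ that refines $p^{-1}\mathfrak{U}$ and is compatible with $s_{-1}$ in the sense that $s_{-1}$ lifts to a simplicial map of index sets, the extra degeneracy yields an operator $h$ on $\mathrm{Tot}^\bullet_{\Delta}(\tilde{\mathfrak{U}},\underline A)$ that lowers the simplicial degree by one and satisfies $\mathrm{D}_\Delta h + h \mathrm{D}_\Delta = \id$ in positive total degree. For a cocycle $\xi \in \mathrm{Tot}^m_{\Delta}(\mathfrak{U},\underline A)$, I set $\beta := h(p^{*}\xi)$; by construction $\mathrm{D}_\Delta\beta = p^{*}\xi$, so $\beta$ is precisely the coboundary that enters the definition of $\tau$, reflecting the contractibility of $EG$.

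Next I compute $\iota^{*}\beta$. Since $h$ decreases simplicial degree by one and $\iota^{*}$ picks out the $q=0$ component, only the $q=1$ component of $p^{*}\xi$ can contribute; equivalently, only the component of $\xi$ in $\check C^{m-1}(\mathscr{U}_1,\underline A)$, which is by definition $v(\xi)$, survives. A brief computation with the standard choice of $p$ and $s_{-1}$, arranged so that $p \circ s_{-1}$ restricts to the identity on the fibre $EG_0 = G$, yields $\iota^{*}\beta = v(\xi)$ at the level of cochains. Consequently $\tau([\xi]) = [\iota^{*}\beta] = [v(\xi)] = v^{*}([\xi])$, as claimed.

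The main obstacle is the simplicial bookkeeping: choosing $\tilde{\mathfrak{U}}$ so that the extra degeneracy lifts, verifying the chain-contraction identity for $h$ with the correct signs of $\mathrm{D}_\Delta$ from \erf{52}, and confirming that all components of $\xi$ of simplicial degree $\geq 2$ produce cochains of simplicial degree $\geq 1$ on $EG$ that are annihilated by $\iota^{*}$. These are direct but careful verifications; once they are in place the identification $\tau = v^{*}$ reduces to the cochain-level formula $\iota^{*}\beta = v(\xi)$.
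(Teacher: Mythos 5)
Your argument is correct and is essentially the computation the paper has in mind: the paper offers no written proof beyond the remark that the identity ``can be explicitly checked using the simplicial model for $G \hookrightarrow EG \to BG$'', and your use of the extra degeneracy $s_{-1}$ to produce the chain contraction $h$, together with the observation that $\iota^{*}h(p^{*}\xi)$ only sees the $\check C^{m-1}(\mathscr{U}_1,\underline{A})$-component of $\xi$, is exactly that check. The sign bookkeeping for $h$ against the differential \erf{52} and the compatibility of the covers with $s_{-1}$, which you correctly flag, are the only remaining routine verifications.
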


\

Now we consider the homomorphism
\begin{equation*}
\dw^{\infty}: H^3(BG,\mathcal{D}^{bi}(2)) \to H^2(G,\mathcal{D}(2))
\end{equation*}
which is induced by the   projection $v^{\infty}:\mathrm{Del}^3_\Delta(\mathscr{U},2)^{bi} \to \mathrm{Del}^2(\mathscr{U}_1,2)$. In terms of geometric objects, 
it takes a multiplicative bundle gerbe $(\mathcal{G},\mathcal{M},\alpha)$ with connection and forgets $\mathcal{M}$ and $\alpha$. By construction, $v^{\infty}$ lifts the chain map $v$ from Lemma \ref{lem3}, so that we have

\begin{proposition}
The homomorphism $\dw^{\infty}$ lifts  transgression, i.e. the diagram
\begin{equation*}
\alxydim{@C=1.7cm}{H^3(BG,\mathcal{D}^{bi}(2)) \ar[d]_{\mch} \ar[r]^-{\dw^{\infty}} & H^2(G,\mathcal{D}(2)) \ar[d]^{\mathrm{DD}} \\ H^4(BG,\Z)  \ar[r]_{\dw} & H^3(G,\Z)}
\end{equation*}
is commutative. Here, $\mch$ is the multiplicative class and $\mathrm{DD}$ is the Dixmier-Douady class. 
\end{proposition}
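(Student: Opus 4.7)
The plan is to decompose the diagram into two subdiagrams, by factoring both vertical arrows through the cohomology group $H^3(BG,\ueinssheaf)$, and then verifying commutativity of each subdiagram separately. More precisely, I will factor $\mch$ as the composition $\partial \circ p^3_{*}$, where $p^3$ is the chain map projecting onto the $\ueinssheaf$-valued components as in \erf{57} and $\partial$ is the connecting homomorphism of the exponential sequence; and I will factor $\mathrm{DD}$ as the analogous composition $\partial \circ q_{*}$, where $q : \mathrm{Del}^2(\mathscr{U}_1,2) \to \check C^2(\mathscr{U}_1,\ueinssheaf)$ projects onto the $\ueinssheaf$-valued component. This gives an enlarged diagram
\begin{equation*}
\alxydim{@C=1.3cm@R=1.2cm}{H^3(BG,\mathcal{D}^{bi}(2)) \ar[d]_{\dw^{\infty}} \ar[r]^-{p^3_{*}} & H^3(BG,\ueinssheaf) \ar[d]_{\dw} \ar[r]^-{\partial} & H^4(BG,\Z) \ar[d]^{\dw} \\ H^2(G,\mathcal{D}(2))  \ar[r]_{q_{*}} & H^2(G,\ueinssheaf) \ar[r]_{\partial} & H^3(G,\Z)}
\end{equation*}
in which the outer commutativity is the one we want.

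The right square commutes by naturality of the connecting homomorphism of the exponential sequence: here the naturality applies to the chain map induced on \v{C}ech resolutions by the projection $v$ from Lemma \ref{lem3}, which identifies the vertical maps with $v^{*}$ on the appropriate coefficients. This step is routine since $v$ is compatible with the short exact sequence $0 \to \Z \to \R \to \ueinssheaf \to 1$ term by term.

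The left square is the essential computation, and reduces to a bookkeeping check at the level of cochains. By definition, $\dw^{\infty}$ is induced by $v^{\infty}:\mathrm{Del}^3_\Delta(\mathfrak{U},2)^{bi} \to \mathrm{Del}^2(\mathscr{U}_1,2)$, which simply extracts the summand indexed by $q=1$, $j=2$ from the total complex \erf{14}–\erf{15}. On this summand the $\ueinssheaf$-valued component is precisely the first entry of the Deligne cocycle in $\mathrm{Del}^2(\mathscr{U}_1,2)$, so applying $q$ after $v^{\infty}$ picks out the same cochain as applying $v$ (in the sense of Lemma \ref{lem3}, with coefficients $\ueinssheaf$) after $p^3$. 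In other words, the equality $q \circ v^{\infty} = v \circ p^3$ holds at the level of cochain complexes, and hence $q_{*} \circ \dw^{\infty} = \dw \circ p^3_{*}$ on cohomology.

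The main obstacle here is purely notational rather than conceptual: one must keep track of which summands of the double and triple complexes carry which components (Deligne degrees, \v{C}ech degrees, simplicial degrees) and verify that the projections $v$, $v^{\infty}$, $p^3$ and $q$ onto these summands commute in the asserted way. Once this is accomplished, Lemma \ref{lem3} identifies $v^{*}$ with transgression $\dw$ on both $H^{\bullet}(BG,\Z)$ and $H^{\bullet}(BG,\ueinssheaf)$, and the desired commutativity of the original square follows immediately by pasting the two commuting squares.
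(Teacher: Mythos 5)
Your proposal is correct and follows essentially the same route as the paper, which simply observes that $v^{\infty}$ lifts the chain map $v$ of Lemma \ref{lem3} and leaves the rest implicit. Your factorization through $H^{\bullet}(\cdot,\ueinssheaf)$ and the cochain-level identity $q \circ v^{\infty} = v \circ p^3$ just make explicit the bookkeeping the paper suppresses.
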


\begin{corollary}
\label{cor3}
A bundle gerbe with connection over a compact Lie group is multiplicative  if and only if its Dixmier-Douady class is transgressive. 
\end{corollary}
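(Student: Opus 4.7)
The plan is to prove the two implications separately, using the commutative square relating $\dw^{\infty}$ and transgression $\tau$ (from the preceding Proposition), together with the compactness-specific surjectivity of $\mathrm{MC}$ from Proposition \ref{prop1}.

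For the forward direction, suppose the bundle gerbe $\mathcal{G}$ with connection is multiplicative, so that it extends to a triple $(\mathcal{G},\mathcal{M},\alpha)$. By Proposition \ref{prop8} this triple corresponds to a class $c \in H^3(BG,\mathcal{D}^{bi}(2))$, and by the geometric description of $\dw^{\infty}$ as \quot{forgetting $\mathcal{M}$ and $\alpha$}, one has $\dw^{\infty}(c) = [\mathcal{G}]$. The commutative square of the preceding Proposition then gives $\mathrm{DD}(\mathcal{G}) = \tau(\mathrm{MC}(c))$, so the Dixmier-Douady class of $\mathcal{G}$ is transgressive.

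For the converse, assume $\mathrm{DD}(\mathcal{G}) = \tau(\xi)$ for some $\xi \in H^4(BG,\Z)$. Compactness and Proposition \ref{prop1} imply that $\mathrm{MC}$ is surjective, so there exists $c \in H^3(BG,\mathcal{D}^{bi}(2))$ with $\mathrm{MC}(c) = \xi$; let $(\mathcal{G}',\mathcal{M}',\alpha')$ be a representative with bimodule curvature $\rho'$. The commutative square yields $\mathrm{DD}(\mathcal{G}') = \tau(\xi) = \mathrm{DD}(\mathcal{G})$. Hence $\mathcal{G}$ and $\mathcal{G}'$ are isomorphic as bundle gerbes without connection, and Remark \ref{re2} produces an invertible bimodule $\mathcal{A}: \mathcal{G} \to \mathcal{G}' \otimes \mathcal{I}_{\sigma}$ for some $\sigma \in \Omega^2(G)$. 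I then transport the multiplicative structure by conjugation: setting
\begin{equation*}
\mathcal{M} := (\mathcal{A}_{12}^{-1} \otimes \id) \circ (\mathcal{M}' \otimes \id) \circ (\mathcal{A}_1 \otimes \mathcal{A}_2)
\end{equation*}
produces a 1-isomorphism $\mathcal{G}_1 \otimes \mathcal{G}_2 \to \mathcal{G}_{12} \otimes \mathcal{I}_{\rho}$ with curvature $\rho := \rho' + \Delta\sigma$, which is $\Delta$-closed since $\rho'$ is and $\Delta^2 = 0$. The bimodule 2-isomorphism $\alpha$ is obtained analogously by conjugating $\alpha'$ with the pullbacks of $\mathcal{A}$ to $G^3$, so $(\mathcal{G},\mathcal{M},\alpha)$ becomes a multiplicative bundle gerbe with connection as required.

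The only genuinely delicate point is verifying that the conjugated 2-isomorphism $\alpha$ satisfies the pentagon axiom over $G^4$. This reduces to the pentagon axiom for $\alpha'$ after observing that in the two compositions appearing in the pentagon equation, each inner copy of a pullback of $\mathcal{A}$ cancels against its own inverse, while the outer copies match on both sides of the equation. This is a routine but bookkeeping-heavy 2-categorical computation in $\mathfrak{BGrb}(G^4)$, and is the main obstacle to a fully rigorous write-up.
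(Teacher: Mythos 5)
Your proposal is correct and follows essentially the same route as the paper, which deduces the corollary from the commutative square relating $\dw^{\infty}$, $\mch$, $\mathrm{DD}$ and $\dw$ together with the surjectivity of $\mch$ for compact $G$ from Proposition \ref{prop1}. The only difference is that you make explicit the transport of the multiplicative structure along the invertible bimodule provided by Remark \ref{re2} (with the curvature shifting by $\Delta\sigma$, preserving $\Delta$-closedness), a step the paper leaves implicit; this is a genuine and correctly handled refinement rather than a different argument.
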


This follows from the surjectivity of $\mch$ for $G$ compact, see Proposition \ref{prop1}. Corollary \ref{cor3} extends Theorem 5.8 of \cite{carey4} from bundle gerbes to bundle gerbes with connection.

\begin{example}
\label{ex3}
Consider the universal cover $p: \su{2} \to \so{3}$. We have a commutative diagram
\begin{equation*}
\alxydim{}{H^4(B\so{3},\Z) \ar[r]^-{Bp^{*}} \ar[d]_{\dw_{\so 3}} & H^4(B\su{2},\Z) \ar[d]^{\dw_{\su 2}} \\ H^3(\so{3},\Z) \ar[r]_{p^{*}} & H^3(\su{2},\Z)\text{.}}
\end{equation*}
All four cohomology groups can canonically be identified  with $\Z$. With respect to this identification it is well known that $\dw_{\su{2}}$ is the identity, $\dw_{\so{3}}$ and $p^{*}$ are  multiplication by 2, and $Bp^{*}$ is  multiplication by 4.  Now suppose that $\mathcal{G}$ is a bundle gerbe over $\so{3}$. It follows that $p^{*}\mathcal{G}\cong \mathcal{G}^{2k}$ where $\mathcal{G}^k$ is one of the canonical bundle gerbes over $\su{2}$.  Suppose further that $\mathcal{G}$ is multiplicative with some multiplicative class $\xi\in H^4(B\so{3},\Z)$. It follows that the Dixmier-Douady class of $\mathcal{G}$ is  $2\xi$, and the one of its pullback is $4\xi$. Hence, $p^{*}\mathcal{G} \cong \mathcal{G}^{4k}$. Put differently, only those bundle gerbes over $\su{2}$ whose level is divisible by 4  descend to multiplicative bundle gerbes with connection over $\so{3}$. 

\end{example}

\section{Applications}

\label{sec4}

This section contains three constructions of geometrical objects, all starting from a multiplicative bundle gerbe with connection: central extensions of loop groups, bundle 2-gerbes for Chern-Simons theories and symmetric bi-branes. 

\subsection{Central Extensions of Loop Groups}

\label{sec4_1}

First we construct a principal $\ueins$-bundle $\mathscr{T}_{\mathcal{G}}$  over the loop space $LM$, associated to any bundle gerbe $\mathcal{G}$ with connection over $M$.  We show that in case that $M$ is a Lie group $G$ (not necessarily compact, simple or simply-connected) and  $\mathcal{G}$ is a  multiplicative bundle gerbe with connection,   $\mathscr{T}_{\mathcal{G}}$ is a Fréchet Lie group and a central extension of $LG$ by $\ueins$. Our construction reproduces and extends previous work of Pressley and Segal \cite{pressley1}, of Mickelsson \cite{mickelsson1}, and of Brylinski and McLaughlin \cite{brylinski4}.

We equip the free loop space $LM:=C^{\infty}(S^1,M)$ with its usual Fréchet structure (\cite{hamilton1}, Ex. 4.1.2). Let us recall how the chart neighborhoods are constructed. One identifies loops $\tau: S^1 \to M$ with  sections $\tilde\tau: S^1 \to S^1 \times M$ in the trivial bundle with fibre $M$ over $S^1$. The goal is that  sections are embeddings, so that one has for each $\tau$ a tubular neighborhood $E_{\tau}$ of the image of $\tilde\tau$ in $S^1 \times M$. A  chart neighborhood of $\tau$ is now defined by 
\begin{equation*}
V_{\tau} := \left \lbrace \gamma\in LM \;|\; \mathrm{Im}(\tilde\gamma) \subset E_{\tau} \right \rbrace\text{;}
\end{equation*}
it is diffeomorphic to an open subset of the Fréchet space $\Gamma(S^1,\tau^{*}TM)$. We recall the following standard facts.

\begin{lemma}[e.g. \cite{brylinski1}, Prop. 6.1.1]
\label{lem6}
The holonomy 
\begin{equation*}
\mathrm{Hol}_P:LM \to \ueins
\end{equation*}
of a principal $\ueins$-bundle $P$ with connection over $M$ is a smooth map. Its derivative is 
\begin{equation*}
\mathrm{dlog}(\mathrm{Hol}_P) = \int_{S^1} \mathrm{ev}^{*}F \in \Omega^1(LM)\text{,} \end{equation*}
where $F \in \Omega^2(M)$ is the curvature of $P$, $\mathrm{ev}: LM \times S^1 \to M$ is the evaluation map and $\int_{S^1}$ denotes the integration along the fibre.
\end{lemma}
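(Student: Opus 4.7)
The plan is to handle the two assertions—smoothness and the explicit formula for $\mathrm{dlog}(\mathrm{Hol}_P)$—separately, using only standard facts about parallel transport together with Hamilton's calculus on Fr\'echet manifolds.

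For smoothness, I would fix a loop $\tau_0 \in LM$ and work in the Fr\'echet chart $V_{\tau_0}$. The evaluation map $\mathrm{ev}:V_{\tau_0}\times S^1 \to M$ is smooth, so the pullback $\mathrm{ev}^{*}P$ is a principal $\ueins$-bundle with connection over $V_{\tau_0}\times S^1$. The map $\tau \mapsto \mathrm{Hol}_P(\tau)$ coincides with the fibrewise holonomy of $\mathrm{ev}^{*}P$ along the loops $\{\tau\}\times S^1$. Trivializing $\tau_0^{*}P$ over $S^1$ (possible since the bundle over $S^1$ is trivializable) and extending the trivialization over a neighborhood $V_{\tau_0}\times S^1$, the holonomy becomes the solution at time $1$ of a linear ODE on $S^1$ whose coefficient is a connection 1-form depending smoothly (in the Fr\'echet sense) on $\tau\in V_{\tau_0}$. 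Smooth dependence of ODE solutions on Fr\'echet parameters, as developed in \cite{hamilton1}, then yields smoothness of $\mathrm{Hol}_P$.

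For the derivative formula, fix $\tau\in LM$ and $X\in T_{\tau}LM = \Gamma(S^1,\tau^{*}TM)$, and choose a smooth variation $t\mapsto \tau_t$ of loops with $\tau_0=\tau$ and $\partial_t|_{t=0}\tau_t = X$. Writing $\Phi(t,\theta) := \tau_t(\theta)$ for the resulting cylinder map, the classical formula expressing the holonomy around the boundary of a bounding surface in terms of the integral of the curvature gives
\begin{equation*}
\mathrm{Hol}_P(\tau_t)\cdot \mathrm{Hol}_P(\tau_0)^{-1} \;=\; \exp\!\left(2\pi\im \int_{[0,t]\times S^1}\Phi^{*}F\right)
\end{equation*}
for $t$ in a small neighborhood of $0$. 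Differentiating at $t=0$ and using that under $d\mathrm{ev}$ the vector $(X,0)$ at $(\tau,\theta)$ maps to $X(\theta)$ and $(0,\partial_\theta)$ maps to $\tau'(\theta)$, one obtains
\begin{equation*}
\mathrm{dlog}(\mathrm{Hol}_P)_{\tau}(X) \;=\; \int_0^1 F\bigl(\tau(\theta)\bigr)\bigl(X(\theta),\tau'(\theta)\bigr)\,d\theta,
\end{equation*}
which is exactly the value of the fibre integral $\int_{S^1}\mathrm{ev}^{*}F$ at $X$, as claimed.

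The main obstacle is a purely technical one: verifying that the parameter-dependent parallel transport, regarded as a map out of the infinite-dimensional chart $V_{\tau_0}$, is smooth in Hamilton's Fr\'echet-manifold sense. Once this has been arranged, the derivative identity is just the standard Stokes-type variation formula for Wilson loops, with the $2\pi\im$-factors cancelling against the conventions of $\mathrm{dlog}$ fixed in Section~\ref{sec3}.
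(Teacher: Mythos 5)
The paper gives no proof of this lemma: it is quoted as a standard fact from Brylinski's book (Prop.\ 6.1.1), so there is no in-paper argument to compare yours against. Your reconstruction is essentially the standard proof and is correct in outline. In particular, the derivative formula follows exactly as you describe from the Stokes-type identity on the cylinder $[0,t]\times S^1$ (which applies because $H^2$ of the cylinder vanishes, so the pullback bundle is trivializable there), and the absence of an explicit $2\pi\im$ in the final formula is consistent with the paper's convention that $\mathrm{dlog}$ takes values in real-valued forms via the identification of the Lie algebra of $\ug1$ with $\R$ through $\mathrm{e}^{2\pi\im(\cdot)}$.

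The one step you should tighten is the appeal to ``smooth dependence of ODE solutions on Fr\'echet parameters.'' General existence, uniqueness and smooth-dependence theorems for ODEs fail in Fr\'echet spaces --- Hamilton's survey, which you cite, contains the counterexamples --- so quoting such a theorem as a black box is not safe. Fortunately you do not need it here: since the structure group is abelian and the restriction of $P$ to the tubular neighborhood $E_{\tau_0}$ is trivializable (that neighborhood deformation retracts onto a circle), the holonomy in that trivialization is the explicit expression $\tau\mapsto \exp\left(2\pi\im\int_{S^1}\tilde\tau^{*}A\right)$ for a globally defined connection 1-form $A$ on $E_{\tau_0}$. Smoothness of $\tau\mapsto\int_{S^1}\tilde\tau^{*}A$ is then a direct consequence of Hamilton's calculus (pullback along the smooth map $(\tau,\theta)\mapsto\tilde\tau(\theta)$ followed by integration over the compact fibre $S^1$), with no ODE theory required. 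With that replacement your argument is complete, and it is the same computation that underlies both the cited reference and the paper's subsequent use of the lemma, namely the smoothness of the transition functions $(\gamma,z)\mapsto(\gamma,z\cdot\mathrm{Hol}_P(\tilde\gamma))$ of $\mathscr{T}_{\mathcal{G}}$ and the local connection 1-forms $A_{\tau}=-\int_{S^1}\mathrm{ev}_{\tau}^{*}\rho_{\tau}$ in the proof of Lemma \ref{lem5}.
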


Let  $\mathcal{G}$
be a bundle gerbe with connection over $M$. We construct the $\ueins$-bundle $\mathscr{T}_{\mathcal{G}}$  over $LM$ by specifying separately its fibres, following the ideas of Section 6.2 of \cite{brylinski1}.
For a loop $\gamma:S^1 \to M$ we consider the category $\mathfrak{Iso}(\gamma^{*}\mathcal{G},\mathcal{I}_0)$ of connection-preserving trivializations of the bundle gerbe $\gamma^{*}\mathcal{G}$ over $S^1$. Such trivializations exist: the Deligne  cohomology group that classifies bundle gerbes with connection 
 over $S^1$ in terms of the bijection \erf{40} vanishes: 
\begin{equation*}
H^2(S^1,\mathcal{D}(2)) \stackrel{(\star)}{=} H^2(S^1,\mathcal{D}_{\mathrm{d}}(2)) \stackrel{(*)}{\cong} H^2(S^1,\ueinssheaf)=0\text{,}
\end{equation*}
where $(\star)$ expresses the fact that all 2-forms on $S^1$ are closed and $(*)$ is the quasi-isomorphism \erf{19}. 

We recall from Section \ref{sec2} that the category $\mathfrak{Iso}(\gamma^{*}\mathcal{G},\mathcal{I}_0)$ is a module  over $\ueins\text{-}\mathfrak{Bun}^{\nabla}_0(S^1)$, the category of flat $\ueins$-bundles over $S^1$. On isomorphism classes, this yields an action of the group $\mathrm{Pic}_0^{\nabla}(S^1)$ of isomorphism classes of flat $\ueins$-bundles over $S^1$ on the set  $\mathrm{Iso}(\gamma^{*}\mathcal{G},\mathcal{I}_0)$ of equivalence classes of trivializations. Moreover,  since the equivalence \erf{38} induces a bijection on isomorphism classes, this action is free and transitive. Due to the canonical identifications
\begin{equation}
\label{16}
\mathrm{Pic}^{\nabla}_0(S^1)\cong\mathrm{Hom}(\pi_1(S^1),\ueins)\cong \ueins\text{,}
\end{equation}
we see that the set  $\mathrm{Iso}(\gamma^{*}\mathcal{G},\mathcal{I}_0)$ is a $\ueins$-torsor. This torsor will be the fibre of the $\ueins$-bundle $\mathscr{T}_{\mathcal{G}}$ over the loop $\gamma$, i.e. we set
\begin{equation*}
\mathscr{T}_{\mathcal{G}}:= \bigsqcup_{\gamma\in LM} \mathrm{Iso}(\gamma^{*}\mathcal{G},\mathcal{I}_0)\text{,}
\end{equation*}
and  denote the evident projection by $p:\mathscr{T}_{\mathcal{G}} \to LM$. 

Let us briefly trace back how  $\ueins$ acts on the total space $\mathscr{T}_{\mathcal{G}}$. A number $z\in \ueins$ corresponds under the isomorphism \erf{16} to a flat bundle $P_z$ over $S^1$, characterized up to isomorphism by  $\mathrm{Hol}_{P_z}(S^1) = z$. Using the action \erf{36} of such bundles on 1-isomorphisms, $z$ takes a trivialization $\mathcal{T}$
 to the new trivialization $P_z \otimes \mathcal{T}$ in the same fibre.

Next we define local sections of $p:\mathscr{T}_{\mathcal{G}} \to LM$ over the chart neighborhoods $V_{\tau}$ of $LM$. Our construction differs slightly from the one of \cite{brylinski1}.
Let $E_{\tau}$ be the tubular neighborhood of $\mathrm{Im}(\tilde\tau)$ in $S^1 \times M$ that has been used to define $V_{\tau}$. Let $t: S^1 \times M \to M$ denote the projection on the second factor. Since $E_{\tau}$ is a strong deformation retract of $\mathrm{Im}(\tilde\tau)$, which is in turn diffeomorphic to $S^1$, we see that $H^{2}(E_{\tau},\ueinssheaf)=H^2(S^1,\ueinssheaf)=0$. Hence, every bundle gerbe with connection over $E_{\tau}$ is isomorphic to one of the trivial bundle gerbes $\mathcal{I}_{\rho}$. This allows us to choose a trivialization 
\begin{equation*}
\mathcal{T}_{\tau}:t^{*}\mathcal{G}|_{E_{\tau}} \to \mathcal{I}_{\rho_{\tau}}\text{.}
\end{equation*}
Consider now a loop $\gamma \in V_{\tau}$. By definition $\tilde\gamma$ is a map $\tilde\gamma:S^1 \to E_{\tau}$, and by construction we have $t \circ \tilde\gamma = \gamma$. Thus, we obtain a well-defined section
\begin{equation*}
s_{\mathcal{T}_{\tau}}: V_{\gamma} \to \mathscr{T}_{\mathcal{G}}: \gamma
\mapsto \tilde\gamma^{*}\mathcal{T}_{\tau}\text{.}
\end{equation*}

We use these local sections to equip $\mathscr{T}_{\mathcal{G}}$ with the structure of a Fréchet manifold, again following the lines of \cite{brylinski1}.  Since the fibres of $\mathscr{T}_{\mathcal{G}}$ are $\ueins$-torsors, the sections $s_{\mathcal{T}_{\tau}}$ define bijections
\begin{equation*}
\varphi_{\tau}: V_{\tau} \times \ueins \to p^{-1}(V_{\tau}): (\gamma,z) \mapsto P_z \otimes \tilde\gamma^{*}\mathcal{T}_{\tau}\text{.}
\end{equation*}
These bijections induce a Fréchet manifold structure on each of the open sets $p^{-1}(V_{\tau})\subset \mathscr{T}_{\mathcal{G}}$.
 It remains to show that the transition functions are smooth. For intersecting sets $V_{\tau_1}$ and $V_{\tau_2}$ the trivializations $\mathcal{T}_{\tau_1}$ and $\mathcal{T}_{\tau_2}$ determine by Proposition \ref{prop4} a principal $\ueins$-bundle 
\begin{equation}
\label{51}
P:=\bun(\mathcal{T}_{\tau_1} \circ \mathcal{T}_{\tau_2}^{-1})
\end{equation}
with connection over the intersection $E_{\tau_1} \cap E_{\tau_2}$ with $P \otimes \mathcal{T}_{\tau_2} \cong \mathcal{T}_{\tau_1}$. 
It follows that the transition function $\varphi_{\tau_2}^{-1} \circ \varphi_{\tau_1}$ is given by
\begin{equation*}
 (\gamma,z) \mapsto (\gamma, z\cdot \mathrm{Hol}_P(\tilde \gamma))\text{,}
\end{equation*}
which is smooth by Lemma \ref{lem6}.
The same calculation shows that a choice of different trivializations $\mathcal{T}_{\tau}'$ gives rise to a compatible Fréchet structure. 
Summarizing, we have shown

\begin{proposition}
\label{prop7}
For $\mathcal{G}$ a bundle gerbe with connection over $M$,
$\mathscr{T}_{\mathcal{G}}$ is a principal $\ueins$-bundle over $LM$. 
\end{proposition}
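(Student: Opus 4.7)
The plan is to organize the construction just sketched into a verification that $p:\mathscr{T}_{\mathcal{G}}\to LM$ satisfies the axioms of a principal $\ueins$-bundle in the Fr\'echet category. Fibrewise, the discussion preceding the statement has already produced the essential data: each fibre $\mathrm{Iso}(\gamma^{*}\mathcal{G},\mathcal{I}_0)$ is non-empty (because $H^2(S^1,\mathcal{D}(2))=0$) and carries a free, transitive $\ueins$-action coming from the free, transitive action of the monoidal category $\ueins\text{-}\mathfrak{Bun}^{\nabla}_0(S^1)$ under the identification (\ref{16}). Thus set-theoretically $\mathscr{T}_{\mathcal{G}}$ is a $\ueins$-torsor bundle, and the only thing left to establish is the smooth structure.

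First I would cover $LM$ by the chart neighborhoods $V_{\tau}$ together with the local trivializations $\mathcal{T}_{\tau}:t^{*}\mathcal{G}|_{E_{\tau}}\to\mathcal{I}_{\rho_{\tau}}$, which exist by $H^2(E_{\tau},\ueinssheaf)=0$. The bijections
\begin{equation*}
\varphi_{\tau}:V_{\tau}\times\ueins\to p^{-1}(V_{\tau}):(\gamma,z)\mapsto P_z\otimes\tilde\gamma^{*}\mathcal{T}_{\tau}
\end{equation*}
transport the product Fr\'echet structure to $p^{-1}(V_{\tau})$. To see that these structures glue, I would compute the transition $\varphi_{\tau_2}^{-1}\circ\varphi_{\tau_1}$ on $V_{\tau_1}\cap V_{\tau_2}$. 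Using the $\ueins$-bundle $P$ from (\ref{51}) over $E_{\tau_1}\cap E_{\tau_2}$ determined by the two trivializations via Proposition \ref{prop4}, the transition takes the explicit form $(\gamma,z)\mapsto(\gamma,z\cdot\mathrm{Hol}_{P}(\tilde\gamma))$. The hard part is precisely the smoothness of the map $\gamma\mapsto\mathrm{Hol}_P(\tilde\gamma)$, which is handled by Lemma \ref{lem6}: holonomy of a $\ueins$-bundle with connection descends to a smooth map on loop space, and the pre-composition with the smooth embedding $V_{\tau_1}\cap V_{\tau_2}\to L(E_{\tau_1}\cap E_{\tau_2})$, $\gamma\mapsto\tilde\gamma$, preserves smoothness. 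Hence the transition functions are smooth Fr\'echet maps and even take values in $\ueins$, so that the Fr\'echet structures on the $p^{-1}(V_{\tau})$ fit into a global Fr\'echet manifold structure on $\mathscr{T}_{\mathcal{G}}$ with $p$ a smooth surjective submersion.

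Finally I would verify the two remaining items. Independence of the manifold structure from the choice of trivializations $\mathcal{T}_{\tau}$ follows by exactly the same argument: two choices $\mathcal{T}_{\tau}$ and $\mathcal{T}_{\tau}'$ differ by a $\ueins$-bundle over $E_{\tau}$ with connection via Proposition \ref{prop4}, and the induced change of trivialization on $p^{-1}(V_{\tau})$ is again a multiplication by the smooth holonomy function, giving a compatible atlas. Smoothness and freeness of the $\ueins$-action on $\mathscr{T}_{\mathcal{G}}$ are read off directly from the local form $\varphi_{\tau}(\gamma,z)\cdot w=\varphi_{\tau}(\gamma,zw)$, and by construction the orbits are exactly the fibres of $p$. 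This assembles all the principal bundle axioms and completes the proof.
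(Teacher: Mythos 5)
Your proposal is correct and follows essentially the same route as the paper, which establishes Proposition \ref{prop7} by exactly this construction: fibres as $\ueins$-torsors of trivialization classes, local trivializations $\varphi_{\tau}$ from the tubular-neighborhood trivializations $\mathcal{T}_{\tau}$, transition functions $(\gamma,z)\mapsto(\gamma,z\cdot\mathrm{Hol}_P(\tilde\gamma))$ smooth by Lemma \ref{lem6}, and independence of the choice of $\mathcal{T}_{\tau}$ by the same holonomy argument. No discrepancies to report.
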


Next we establish important functorial properties of our construction.
We consider a 1-isomorphism $\mathcal{A}:\mathcal{G} \to \mathcal{H}$ between bundle gerbes with connection over $M$, and the associated principal $\ueins$-bundles $\mathscr{T}_{\mathcal{G}}$ and $\mathscr{T}_{\mathcal{H}}$. For a trivialization $\mathcal{T}:\gamma^{*}\mathcal{G} \to \mathcal{I}_0$ of $\mathcal{G}$ over a loop $\gamma\in LM$, we have a trivialization 
\begin{equation*}
\mathcal{T} \circ \gamma^{*}\mathcal{A}^{-1}:\gamma^{*}\mathcal{H} \to \mathcal{I}_0
\end{equation*}
of $\mathcal{H}$ over the same loop.  This is well-defined on equivalence classes of trivializations, and thus defines a map $\mathscr{T}_{\mathcal{A}}:\mathscr{T}_{\mathcal{G}} \to \mathscr{T}_{\mathcal{H}}$. 

\begin{proposition}
\label{prop6}
Let $\mathcal{G}$ and $\mathcal{H}$ be bundle gerbes with connection.
The map
\begin{equation*}
\mathscr{T}_{\mathcal{A}}: \mathscr{T}_{\mathcal{G}} \to \mathscr{T}_{\mathcal{H}}
\end{equation*}
associated to a 1-isomorphism $\mathcal{A}:\mathcal{G} \to \mathcal{H}$ is an isomorphism of principal $\ueins$-bundles over $LM$. Moreover:
\begin{itemize}
\item[(a)]
It respects the composition: if $\mathcal{A}:\mathcal{G} \to \mathcal{H}$ and $\mathcal{B}:\mathcal{H} \to \mathcal{K}$ are 1-isomorphisms, $\mathscr{T}_{\mathcal{B} \circ \mathcal{A}} = \mathscr{T}_{\mathcal{B}} \circ \mathscr{T}_{\mathcal{A}}$. 
\item[(b)]
It respects identities: $\mathscr{T}_{\id_{\mathcal{G}}} = \id_{\mathscr{T}_{\mathcal{G}}}$.

\item[(c)]
The existence of a 2-isomorphism $\beta:\mathcal{A} \Rightarrow \mathcal{B}$ implies that the isomorphisms $\mathscr{T}_{\mathcal{A}}$ and $\mathscr{T}_{\mathcal{B}}$ are equal.

\end{itemize}
\end{proposition}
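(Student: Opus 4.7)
The plan is to verify first that $\mathscr{T}_{\mathcal{A}}$ is well-defined, then establish $\ueins$-equivariance, then smoothness (which will be the main technical step), and finally deduce bijectivity and check the functorial items (a)--(c) almost for free.

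For well-definedness on fibres, I would observe that two trivializations $\mathcal{T},\mathcal{T}':\gamma^{*}\mathcal{G}\to\mathcal{I}_{0}$ define the same point of $\mathscr{T}_{\mathcal{G}}$ precisely when there is a 2-isomorphism $\beta:\mathcal{T}\Rightarrow \mathcal{T}'$. The horizontal composition $\beta\circ\id_{\gamma^{*}\mathcal{A}^{-1}}$ then yields a 2-isomorphism $\mathcal{T}\circ\gamma^{*}\mathcal{A}^{-1}\Rightarrow \mathcal{T}'\circ\gamma^{*}\mathcal{A}^{-1}$, so the image is independent of the chosen representative. For $\ueins$-equivariance I would use that the action of $z\in\ueins$ on $\mathscr{T}_{\mathcal{G}}$ is given by tensoring with the flat bundle $P_{z}$ via the functor \erf{36}; since tensor product and composition of 1-morphisms commute in the 2-category $\mathfrak{BGrb}(S^{1})$, one has $P_{z}\otimes(\mathcal{T}\circ\gamma^{*}\mathcal{A}^{-1})=(P_{z}\otimes\mathcal{T})\circ\gamma^{*}\mathcal{A}^{-1}$, proving that $\mathscr{T}_{\mathcal{A}}$ commutes with the $\ueins$-action. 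Fibrewise this is a $\ueins$-equivariant map between $\ueins$-torsors, hence bijective; thus once smoothness is established, $\mathscr{T}_{\mathcal{A}}$ will automatically be a bundle isomorphism (an explicit inverse is provided by $\mathscr{T}_{\mathcal{A}^{-1}}$, using (a) and (b)).

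The main obstacle is smoothness. Here I would mimic the construction of the Fr\'echet structure on $\mathscr{T}_{\mathcal{G}}$. Fix a loop $\tau$, the tubular neighborhood $E_{\tau}\subset S^{1}\times M$ and the chart $V_{\tau}$. Choose trivializations $\mathcal{T}_{\tau}:t^{*}\mathcal{G}|_{E_{\tau}}\to\mathcal{I}_{\rho_{\tau}}$ and $\mathcal{T}'_{\tau}:t^{*}\mathcal{H}|_{E_{\tau}}\to\mathcal{I}_{\rho'_{\tau}}$; they produce the local trivializations $\varphi_{\tau}$ of $\mathscr{T}_{\mathcal{G}}$ and $\varphi'_{\tau}$ of $\mathscr{T}_{\mathcal{H}}$ over $V_{\tau}$. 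A direct computation using that pullback along $\tilde\gamma$ is compatible with composition of 1-morphisms shows
\begin{equation*}
\mathscr{T}_{\mathcal{A}}\big(P_{z}\otimes \tilde\gamma^{*}\mathcal{T}_{\tau}\big)
= P_{z}\otimes \tilde\gamma^{*}(\mathcal{T}_{\tau}\circ t^{*}\mathcal{A}^{-1}) .
\end{equation*}
Now $\mathcal{T}_{\tau}\circ t^{*}\mathcal{A}^{-1}$ and $\mathcal{T}'_{\tau}$ are two trivializations of $t^{*}\mathcal{H}|_{E_{\tau}}$, so by Proposition \ref{prop4} they differ by a principal $\ueins$-bundle with connection $Q:=\bun\bigl((\mathcal{T}_{\tau}\circ t^{*}\mathcal{A}^{-1})\circ \mathcal{T}'^{-1}_{\tau}\bigr)$ over $E_{\tau}$. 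Exactly as in the derivation of \erf{51}, this yields the local expression
\begin{equation*}
(\varphi'_{\tau})^{-1}\circ \mathscr{T}_{\mathcal{A}}\circ \varphi_{\tau}:(\gamma,z)\longmapsto \bigl(\gamma,\,z\cdot\mathrm{Hol}_{Q}(\tilde\gamma)\bigr),
\end{equation*}
which is smooth by Lemma \ref{lem6}. Since the chart neighborhoods $V_{\tau}$ cover $LM$, smoothness of $\mathscr{T}_{\mathcal{A}}$ follows.

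Finally, the functorial properties are essentially immediate from the definition. For (a), $(\mathcal{T}\circ \gamma^{*}\mathcal{A}^{-1})\circ \gamma^{*}\mathcal{B}^{-1}=\mathcal{T}\circ\gamma^{*}(\mathcal{B}\circ\mathcal{A})^{-1}$ by associativity of composition and functoriality of pullback. For (b), $\gamma^{*}\id_{\mathcal{G}}^{-1}=\id_{\gamma^{*}\mathcal{G}}$, so $\mathscr{T}_{\id_{\mathcal{G}}}$ acts trivially on each representative. For (c), a 2-isomorphism $\beta:\mathcal{A}\Rightarrow \mathcal{B}$ pulls back to a 2-isomorphism $\gamma^{*}\beta^{-1}:\gamma^{*}\mathcal{B}^{-1}\Rightarrow \gamma^{*}\mathcal{A}^{-1}$, and horizontal composition with $\id_{\mathcal{T}}$ gives a 2-isomorphism $\mathcal{T}\circ\gamma^{*}\mathcal{A}^{-1}\Rightarrow \mathcal{T}\circ\gamma^{*}\mathcal{B}^{-1}$, whence these trivializations define the same point of $\mathscr{T}_{\mathcal{H}}$, proving $\mathscr{T}_{\mathcal{A}}=\mathscr{T}_{\mathcal{B}}$.
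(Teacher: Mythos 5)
Your proposal is correct and follows the same overall strategy as the paper: fibrewise definition, $\ueins$-equivariance via $P_z\otimes(\mathcal{T}\circ\gamma^{*}\mathcal{A}^{-1})\cong(P_z\otimes\mathcal{T})\circ\gamma^{*}\mathcal{A}^{-1}$, smoothness checked in the charts $\varphi_{\tau}$, and the functorial items read off from the definition. Two tactical differences are worth noting. For smoothness, the paper simply \emph{chooses} the chart of $\mathscr{T}_{\mathcal{H}}$ to be the one induced by $\mathcal{T}_{\tau}':=\mathcal{T}_{\tau}\circ t^{*}\mathcal{A}^{-1}$, so that the local expression of $\mathscr{T}_{\mathcal{A}}$ is literally the identity; you instead keep an arbitrary chart and compute the transition as multiplication by $\mathrm{Hol}_{Q}(\tilde\gamma)$ for the bundle $Q$ from Proposition \ref{prop4}, invoking Lemma \ref{lem6}. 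Both work; your version is independent of the earlier remark that different trivializations give compatible Fr\'echet structures, while the paper's is shorter. For (a) and (b), the paper proves (c) first and then applies it to the canonical 2-isomorphisms $(\mathcal{B}\circ\mathcal{A})^{-1}\cong\mathcal{A}^{-1}\circ\mathcal{B}^{-1}$ and $\mathcal{A}\circ\id_{\mathcal{G}}\cong\mathcal{A}$; you assert these as equalities of 1-morphisms, which in the 2-category only hold up to canonical 2-isomorphism. This is a harmless imprecision (since points of $\mathscr{T}_{\mathcal{H}}$ are 2-isomorphism classes it does not affect the conclusion), but the paper's route through (c) is the cleaner way to phrase it.
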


\begin{proof}
The map $\mathscr{T}_{\mathcal{A}}$ is by definition fibre-preserving. It respects the $\ueins$-actions because the trivializations $P \otimes (\mathcal{T} \circ \gamma^{*}\mathcal{A}^{-1})$ and $(P\otimes \mathcal{T}) \circ \gamma^{*}\mathcal{A}^{-1}$ are equivalent. In order to check that $\mathscr{T}_{\mathcal{A}}$ is smooth, we consider a chart neighborhood $V_{\tau}$ of $LM$,  and the  chart $\varphi_{\tau}$ of $\mathscr{T}_{\mathcal{G}}$ defined by a trivialization $\mathcal{T}_{\tau}: t^{*}\mathcal{G}|_{E_{\tau}} \to \mathcal{I}_{\rho_{\tau}}$ as explained above. We may conveniently choose the trivialization 
\begin{equation*}
\mathcal{T}_{\tau}' := \mathcal{T}_{\tau} \circ t^{*}\mathcal{A}^{-1}: t^{*}\mathcal{H}|_{E_{\tau}} \to \mathcal{I}_{\rho_{\tau}}
\end{equation*}
to determine a chart $\varphi_{\tau}'$ of $\mathscr{T}_{\mathcal{H}}$. Then, \begin{equation*}
\alxydim{}{V_{\tau} \times \ueins \ar[r]^-{\varphi_{\tau}} & p_{\mathcal{G}}^{-1}(V_{\tau}) \ar[r]^-{\mathscr{T}_{\mathcal{A}}} & p_{\mathcal{H}}^{-1}(V_{\tau}) \ar[r]^-{\varphi_{\tau}^{\prime -1}} & V_{\tau} \times \ueins}
\end{equation*}
is the identity, and hence smooth. Assertion
(c) follows directly from the definition. (a) and (b) follow by applying (c) to the canonical 2-isomorphisms $(\mathcal{B} \circ \mathcal{A})^{-1} \cong \mathcal{A}^{-1} \circ \mathcal{B}^{-1}$ and $\mathcal{A} \circ \id_{\mathcal{G}} \cong \mathcal{A}$, respectively.
\end{proof}

The two preceding propositions can be summarized in the following way. We denote by $\mathrm{h}\mathfrak{BGrb}^{\nabla}(M)$ the \quot{homotopy} category whose objects are bundle gerbes with connection over $M$ and whose morphisms are 2-isomorphism classes of 1-isomorphisms. This way we have defined a  functor 
\begin{equation*}
\mathscr{T}:\mathrm{h}\mathfrak{BGrb}^{\nabla}(M) \to \ueins\text{-}\mathfrak{Bun}(LM)
\end{equation*}
that we call \emph{transgression}. 
Moreover, this functor is monoidal. Indeed, bundle isomorphisms
\begin{equation}
\label{49}
 \mathscr{T}_{\mathcal{G}} \otimes \mathscr{T}_{\mathcal{H}} \cong \mathscr{T}_{\mathcal{G} \otimes \mathcal{H}}
\quad\text{ and }\quad
\mathscr{T}_{\mathcal{I}_0}\cong \trivlin
\end{equation}
can be defined as follows. The first  sends a pair $(\mathcal{T}_1,\mathcal{T}_2)$ of trivializations of $\gamma^{*}\mathcal{G}$ and $\gamma^{*}\mathcal{H}$, respectively, to their tensor product $\mathcal{T}_1 \otimes \mathcal{T}_2: \gamma^{*}(\mathcal{G} \otimes \mathcal{H}) \to \mathcal{I}_0$. The second is a particular case of the more general fact that the $\ueins$-bundle $\mathscr{T}_{\mathcal{I}_{\rho}}$ associated to \emph{any} trivial bundle gerbe $\mathcal{I}_{\rho}$ is canonically globally trivializable: a global section of $\mathscr{T}_{\mathcal{I}_{\rho}}$ is defined by $\gamma \mapsto \gamma^{*}\id$, where $\id: \mathcal{I}_{\rho} \to \mathcal{I}_{\rho}$ is the identity isomorphism. It is straightforward to check that these isomorphisms are smooth and that the coherence axioms for monoidal functors are satisfied.
Summarizing, we have shown

\begin{proposition}
Transgression is a monoidal functor
\begin{equation*}
\mathscr{T}:\mathrm{h}\mathfrak{BGrb}^{\nabla}(M) \to \ueins\text{-}\mathfrak{Bun}(LM)\text{.}
\end{equation*}
\end{proposition}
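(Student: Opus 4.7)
The plan is to establish separately (i) that the two families of maps in \erf{49} are bijective, $\ueins$-equivariant and smooth, (ii) that they are natural in their arguments, and (iii) that they satisfy the associativity pentagon and the two unit triangles of a monoidal functor. The plain functoriality of $\mathscr{T}$ has already been established in Proposition \ref{prop6}, so only this additional monoidal data and its coherence remain.

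First I would verify that the fibrewise assignment $(\mathcal{T}_1,\mathcal{T}_2)\mapsto \mathcal{T}_1 \otimes \mathcal{T}_2$ descends to equivalence classes of trivializations and is $\ueins$-equivariant. Descent follows from the 2-functoriality of the tensor product of 1-morphisms in $\mathfrak{BGrb}(M)$; $\ueins$-equivariance amounts to exhibiting canonical 2-iso\-mor\-phisms $(P_z \otimes \mathcal{T}_1) \otimes \mathcal{T}_2 \cong P_z \otimes (\mathcal{T}_1 \otimes \mathcal{T}_2) \cong \mathcal{T}_1 \otimes (P_z \otimes \mathcal{T}_2)$, provided by the associator of $\mathfrak{BGrb}(M)$ together with $\mathcal{I}_0 \otimes \mathcal{I}_0 = \mathcal{I}_0$. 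Bijectivity on each fibre is then automatic since both sides are $\ueins$-torsors. The second isomorphism $\mathscr{T}_{\mathcal{I}_0} \cong \trivlin$ is obtained from the nowhere-vanishing global section $\gamma \mapsto [\gamma^{*}\id_{\mathcal{I}_0}]$, and freeness of the $\ueins$-action makes this section automatically a trivialization.

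Next I would check smoothness by expressing both isomorphisms in local charts. Given a tube $E_\tau$ of $\mathrm{Im}(\tilde\tau)$ in $S^1 \times M$, I would choose trivializations $\mathcal{T}^{\mathcal{G}}_\tau:t^{*}\mathcal{G}|_{E_\tau}\to\mathcal{I}_{\rho_\tau}$ and $\mathcal{T}^{\mathcal{H}}_\tau:t^{*}\mathcal{H}|_{E_\tau}\to\mathcal{I}_{\sigma_\tau}$, and then use $\mathcal{T}^{\mathcal{G}}_\tau \otimes \mathcal{T}^{\mathcal{H}}_\tau$ as the reference trivialization defining the chart of $\mathscr{T}_{\mathcal{G}\otimes\mathcal{H}}$ over $V_\tau$. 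In the resulting charts $\varphi_\tau$ on all three bundles, the first map of \erf{49} becomes $(\gamma,z,w) \mapsto (\gamma,zw)$ on $V_\tau \times \ueins \times \ueins$, which is manifestly smooth. An analogous argument with the identity trivialization of $\mathcal{I}_0$ handles $\mathscr{T}_{\mathcal{I}_0} \cong \trivlin$. That different choices of reference trivializations yield compatible smooth structures is controlled by the bundle $P$ from \erf{51}, exactly as in the proof of Proposition \ref{prop7}.

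Finally, naturality of both isomorphisms with respect to 1-morphisms $\mathcal{A}:\mathcal{G}\to \mathcal{G}'$ and $\mathcal{B}:\mathcal{H}\to \mathcal{H}'$ follows from the 2-functoriality of the tensor product, while the associator pentagon and the unit triangles reduce termwise to the corresponding axioms for the tensor product of 1-morphisms in $\mathfrak{BGrb}(M)$, which hold strictly. The main obstacle I anticipate is the $\ueins$-equivariance step, because it requires simultaneously invoking the free-and-transitive action of flat bundles from \erf{38} and the canonical associativity 2-isomorphisms in $\mathfrak{BGrb}(M)$ --- one has to check that the $\ueins$-torsor structure on $\mathscr{T}_{\mathcal{G}\otimes\mathcal{H}}$ inherited via tensoring with flat bundles matches the one on $\mathscr{T}_\mathcal{G}\otimes\mathscr{T}_\mathcal{H}$ coming from the product $\ueins$-action. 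Once this compatibility is pinned down, the remaining smoothness and coherence verifications are essentially formal.
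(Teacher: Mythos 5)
Your proposal defines the monoidal structure maps exactly as the paper does --- the first isomorphism via the fibrewise tensor product $(\mathcal{T}_1,\mathcal{T}_2)\mapsto\mathcal{T}_1\otimes\mathcal{T}_2$ and the second via the global section $\gamma\mapsto\gamma^{*}\id$ of $\mathscr{T}_{\mathcal{I}_0}$ --- and then carries out the equivariance, smoothness and coherence verifications that the paper dismisses as straightforward. This is the same approach, just with the routine checks made explicit, and your chart-level argument for smoothness correctly mirrors the construction of the Fr\'echet structure in Proposition \ref{prop7}.
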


We observe furthermore, for $f:N \to M$ a smooth map, that the identification $\gamma^{*}f^{*}\mathcal{G} = (f \circ \gamma)^{*}\mathcal{G}$ induces natural isomorphisms
\begin{equation}
\label{61}
Lf^{*}\mathscr{T}_{\mathcal{G}} \cong \mathscr{T}_{f^{*}\mathcal{G}}
\end{equation}
of $\ueins$-bundles over $LN$. 

Suppose $\mathcal{G}$ and $\mathcal{G}'$ are the same underlying bundle gerbe, but equipped with different connections. According to Remark \ref{re2}, there exists an invertible bimodule $\mathcal{A}:\mathcal{G} \to \mathcal{G}' \otimes \mathcal{I}_{\rho}$, whose curvature 2-form $\rho$ compensates the difference between the two connections. The transgression of $\mathcal{A}$ defines an isomorphism $\mathscr{T}_{\mathcal{A}}: \mathscr{T}_{\mathcal{G}} \to \mathscr{T}_{\mathcal{G'} \otimes \mathcal{I}_{\rho}} \cong \mathscr{T}_{\mathcal{G'}} \otimes \mathscr{T}_{\mathcal{I}_{\rho}} \cong \mathscr{T}_{\mathcal{G}'}$. Thus, the principal $\ueins$-bundle $\mathscr{T}_{\mathcal{G}}$ depends on the connection on $\mathcal{G}$ only up to canonical isomorphisms.

As a consequence, we have realized a well-defined group homomorphism
\begin{equation}
\label{24}
\mu: H^3(M,\Z) \to H^2(LM,\Z)\text{,}
\end{equation}
which sends the Dixmier-Douady class of a bundle gerbe $\mathcal{G}$ with connection to the first Chern class of the $\ueins$-bundle $\mathscr{T}_{\mathcal{G}}$. 
\begin{lemma}
\label{lem5}
The homomorphism $\mu$ covers  integration along the fibre in de Rham cohomology up to a sign, i.e. the diagram
\begin{equation*}
\alxydim{}{H^3(M,\Z) \ar[r]^-{\mu} \ar[d] & H^2(LM,\Z) \ar[d] \\ H^3_{\mathrm{dR}}(M) \ar[r]_-{-\int_{S^1}} & H_{\mathrm{dR}}^2(LM)}
\end{equation*}
is commutative. 
\end{lemma}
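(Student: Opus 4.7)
The plan is to place an explicit connection on $\mathscr{T}_{\mathcal{G}}$ whose curvature is $-\int_{S^1}\mathrm{ev}^{*}H$, where $H\in\Omega^3(M)$ is the curvature of the given connection on $\mathcal{G}$. Since $H$ de Rham-represents $\mathrm{DD}(\mathcal{G})$ and the curvature of any connection on a principal $\ueins$-bundle de Rham-represents its first Chern class, this would establish the commutativity of the diagram at once.

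To build such a connection, I would reuse the data entering the construction of $\mathscr{T}_{\mathcal{G}}$. Over each chart neighborhood $V_{\tau}\subset LM$, the chosen trivialization $\mathcal{T}_{\tau}:t^{*}\mathcal{G}|_{E_{\tau}}\to\mathcal{I}_{\rho_{\tau}}$ yields a 2-form $\rho_{\tau}\in\Omega^2(E_{\tau})$ satisfying $\mathrm{d}\rho_{\tau}=t^{*}H$. With $\tilde{\mathrm{ev}}:V_{\tau}\times S^1\to E_{\tau}$, $(\gamma,s)\mapsto\tilde\gamma(s)$, I propose the local 1-form
\begin{equation*}
A_{\tau}:=-\int_{S^1}\tilde{\mathrm{ev}}^{*}\rho_{\tau}\in\Omega^1(V_{\tau}).
\end{equation*}
The verification that the $A_{\tau}$ form a connection combines two inputs already present in the construction of $\mathscr{T}_{\mathcal{G}}$: the transition function between charts $\varphi_{\tau_1}$ and $\varphi_{\tau_2}$ is $\gamma\mapsto\mathrm{Hol}_{P}(\tilde\gamma)$ with $P$ as in \erf{51}, and by Proposition \ref{prop4} this bundle $P$ has curvature $\rho_{\tau_1}-\rho_{\tau_2}$. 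Lemma \ref{lem6} then gives $\mathrm{dlog}(\mathrm{Hol}_P)=A_{\tau_2}-A_{\tau_1}$, exactly the transition law expected of a connection 1-form. The curvature is then computed routinely as $\mathrm{d}A_{\tau}=-\int_{S^1}\mathrm{ev}^{*}H$, using $t\circ\tilde{\mathrm{ev}}=\mathrm{ev}$ and the fact that fibre integration over the closed fibre $S^1$ commutes with $\mathrm{d}$.

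The main obstacle I anticipate is not analytical but book-keeping: the minus sign in the statement depends on the coherent interplay of the conventions of the exponential sequence, the sign in Lemma \ref{lem6}, and the identification $\mathrm{Pic}_0^{\nabla}(S^1)\cong\ueins$ in \erf{16}. A careful audit of these is needed to confirm that the factor really is $-\int_{S^1}$ and not $+\int_{S^1}$.
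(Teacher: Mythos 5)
Your proposal is correct and follows essentially the same route as the paper's own proof: the paper likewise defines the local connection forms $A_{\tau}:=-\int_{S^1}\mathrm{ev}_{\tau}^{*}\rho_{\tau}$, checks the transition law via the bundle $P$ of \erf{51} and Lemma \ref{lem6}, and computes $\mathrm{d}A_{\tau}=-\int_{S^1}\mathrm{ev}^{*}H$ using the commutativity of the evaluation diagram. The sign audit you flag is already consistent with the conventions fixed in the paper, so no further adjustment is needed.
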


\begin{proof}
We define a connection on $\mathscr{T}_{\mathcal{G}}$ and show that its curvature is minus  the integration over the fibre of the  curvature of $\mathcal{G}$. Let $\tau\in LM$, and let $s_{\mathcal{T}_{\tau}}: V_{\gamma} \to \mathscr{T}_{\mathcal{G}}$ be a local section defined on the neighborhood $V_{\tau}$ from a trivialization $\mathcal{T}_{\tau}: t^{*}\mathcal{G}|_{E_{\tau}} \to \mathcal{I}_{\rho_{\tau}}$ as explained above. Notice that the evaluation map $\mathrm{ev}:LM \times S^1 \to M$ lifts to a commutative diagram
\begin{equation}
\label{48}
\alxydim{}{V_{\tau} \times S^1 \ar[r]^-{\mathrm{ev}_{\tau}} \ar@{^(->}[d] & E_{\tau} \ar[d]^{t} \\ LM \times S^1 \ar[r]_-{\mathrm{ev}} & M\text{.}}
\end{equation}   
We define a local  1-form
\begin{equation*}
A_{\tau} := -  \int_{S^1} \mathrm{ev}_{\tau}^{*}\rho_{\tau} \in \Omega^1(V_{\tau})
\end{equation*}
by integration along the fibre. For each intersection $V_{\tau_1}\cap V_{\tau_2}$, we have the principal $\ueins$-bundle $P$ from \erf{51}, which has curvature $F = \rho_{\tau_1} - \rho_{\tau_2}$. Thus,
\begin{equation*}
A_{\tau_2} - A_{\tau_1} = \int_{S^1} \mathrm{ev}^{*}F =   \mathrm{dlog}(\mathrm{Hol}_P)
\end{equation*}
by Lemma \ref{lem6}. Since the holonomy of $P$ is a transition function for $\mathscr{T}_{\mathcal{G}}$, we conclude that the local 1-forms $A_{\tau}$ define a connection on $\mathscr{T}_{\mathcal{G}}$. 
The curvature of this connection is
\begin{equation*}
\mathrm{d}A_{\tau} = - \int_{S^1} \mathrm{ev}_{\tau}^{*}\mathrm{d}\rho_{\tau} = - \int_{S^1}\mathrm{ev}_{\tau}^{*}t^{*}H  = -  \int_{S^1}\mathrm{ev}^{*}H\text{,}
\end{equation*}
with $H$ the curvature of the bundle gerbe $\mathcal{G}$, where the first equality follows since integration along a closed fibre is a chain map, the second because the two isomorphic bundle gerbes  $t^{*}\mathcal{G}|_{E_{\tau}}$ and $\mathcal{I}_{\rho_{\tau}}$ necessarily have equal curvatures, and the third is due to the commutativity of diagram \erf{48}. 
\end{proof}

In the following we consider the principal $\ueins$-bundle $\mathscr{T}_{\mathcal{G}}$ associated to a \emph{multiplicative} bundle gerbe $(\mathcal{G},\mathcal{M},\alpha)$ with connection over a Lie group $G$. We show that its total space is a central extension of its base space,  the Fréchet Lie group $LG$. 

To do so, we use Grothendieck's correspondence between central extensions and multiplicative $\ueins$-bundles \cite{grothendieck1}. We shall briefly review this correspondence. As mentioned in the introduction, a multiplicative $\ueins$-bundle over a Fréchet Lie group $H$ is a principal $\ueins$-bundle $p:P \to H$ together with a bundle isomorphism
\begin{equation*}
\phi: p_1^{*}P \otimes p_2^{*}P \to m^{*}P
\end{equation*}
over $H \times H$ such that the diagram
\begin{equation}
\label{46}
\alxydim{@C=1.8cm@R=1.2cm}{p_1^{*}P \otimes p_2^{*}P \otimes p_3^{*}P \ar[r]^-{m_{12}^{*}\phi \otimes \id} \ar[d]_{\id \otimes m_{23}^{*}\phi} & m_{12}^{*}P \otimes p_3^{*}P \ar[d]^{m_{12,3}^{*}\phi} \\ p_1^{*}P \otimes m_{23}^{*}P \ar[r]_-{m_{1,23}^{*}\phi} & m_{123}^{*}P}
\end{equation}
over $H \times H \times H$ is commutative. Concerning the various multiplication maps we have used the notation introduced in Section \ref{sec2}.
A central extension of $H$ is obtained by defining the following Fréchet Lie group structure on the total space $P$. 
\begin{enumerate}
\item 
The product is the top row in the commutative diagram
\begin{equation*}
\alxydim{}{P \times P \ar[d]_{p \times p} \ar[r] & p_1^{*}P \otimes p_2^{*}P \ar[d] \ar[r]^-{\phi} & m^{*}P \ar[d] \ar[r] & P \ar[d]^{p} \\ H \times H \ar@{=}[r] & H \times H \ar@{=}[r] \ar@{=}[r] & H \times H \ar[r]_-{m} & H\text{,}}
\end{equation*}
which is  a smooth map and covers the multiplication of $H$. As a consequence of the commutativity of \erf{46}, the product is associative.

\item
To look for the identity element, we restrict our attention to the fibre of $P$ over $1\in H$, where the isomorphism $\phi$ is an isomorphism $\phi_{1,1}:P_1 \otimes P_1 \to P_1$ of $\ueins$-torsors. Any such isomorphism determines an element $e\in P_1$ with $\phi(p,e)=p$ for all $p\in P_1$. Using the commutativity of \erf{46} it is straightforward to see that $e$ is a right and left identity for the product defined by $\phi$. 

\

\item
The inversion of $P$ is defined using the fact that $P$ has a canonical dual bundle $P^{\vee}$, namely  $P^{\vee}:=P$ but with $\ueins$ acting by inverses. It  has also a canonical isomorphism $d: P^{\vee} \otimes P \to \trivlin$ defined by $d(p,p)=1$ for all $p\in P$. Now, the inversion of $P$ is the top row in the diagram
\begin{equation*}
\alxydim{@C=1.1cm}{ P \ar[d] \ar[r]^{\id} &  P^{\vee} \ar[d] \ar[r]^-{e} & P^{\vee} \otimes P_1 \ar[r]^-{\id \otimes j^{*}\phi^{-1}} \ar[d] \ar[r] & P^{\vee} \otimes P \otimes i^{*}P \ar[d] \ar[r]^-{d \otimes \id} & i^{*}P \ar[d] \ar[r] & P \ar[d] \\ H \ar@{=}[r]  & H \ar@{=}[r] & H \ar@{=}[r] & H \ar@{=}[r] & H \ar[r]_{i}  & H\text{,}}
\end{equation*}
in which $i:H \to H$ is the inversion of $H$, $e$ sends $p\in P^{\vee}$ to $(p,e)$, and $j:H \to H \times H$ is the map $t(h):=(h,i(h))$. The inversion defined like this is a smooth map  and covers $i$. One can  check that it provides  right (and thus also left) inverses for the product defined by $\phi$.

\
 
\end{enumerate}
It remains to notice that the map $\iota:\ueins \to P:z \mapsto e.z$ is a diffeomorphism onto its image $P_1$, and that the sequence
\begin{equation*}
1 \to \ueins \to P \to H \to 1
\end{equation*}
is exact; it is hence a central extension of Fréchet Lie groups.

In order to apply this construction to the transgressed principal $\ueins$-bundle $P:=\mathscr{T}_{\mathcal{G}}$ over $H:=LG$, we only need to define the isomorphism $\phi$. This is done using the transgression of the 1-isomorphism $\mathcal{M}$ and the canonical isomorphisms \erf{49} and \erf{61}: we obtain a bundle isomorphism 
\begin{equation*}
\alxydim{@C=1.4cm}{Lp_1^{*}\mathscr{T}_{\mathcal{G}} \otimes Lp_2^{*}\mathscr{T}_{\mathcal{G}} \cong \mathscr{T}_{p_1^{*}\mathcal{G} \otimes p_2^{*}\mathcal{G}} \ar[r]^-{\mathscr{T}_{\mathcal{M}}} & \mathscr{T}_{m^{*}\mathcal{G} \otimes \mathcal{I}_{\rho}} \cong Lm^{*}\mathscr{T}_{\mathcal{G}}}
\end{equation*}
over $LG \times LG$, which  we  denote by $\phi_{\mathcal{M}}$. 

\begin{lemma}
Let $(\mathcal{G},\mathcal{M},\alpha)$ be a multiplicative bundle gerbe with connection over $G$.
Then,  $(\mathscr{T}_{\mathcal{G}},\phi_{\mathcal{M}})$ is a multiplicative principal $\ueins$-bundle over $LG$.
\end{lemma}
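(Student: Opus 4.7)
The plan is to verify the two requirements on a multiplicative $\ueins$-bundle in turn: that $\phi_{\mathcal{M}}$ is a smooth bundle isomorphism, and that the cocycle diagram \erf{46} commutes over $LG \times LG \times LG$. The first is built into our definition: $\mathscr{T}_{\mathcal{M}}$ is a bundle isomorphism by Proposition \ref{prop6}, and the two flanking isomorphisms in the definition of $\phi_{\mathcal{M}}$ are the monoidality isomorphism $\mathscr{T}_{\mathcal{G}}\otimes \mathscr{T}_{\mathcal{H}}\cong \mathscr{T}_{\mathcal{G}\otimes\mathcal{H}}$ from \erf{49} together with the canonical trivialization $\mathscr{T}_{\mathcal{I}_\rho} \cong \trivlin$, and the naturality isomorphisms \erf{61} for the maps $p_1, p_2, m: G\times G\to G$. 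Each of these is smooth, so $\phi_{\mathcal{M}}$ is a bundle isomorphism.

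For the cocycle condition, the strategy is to pull \erf{46} back along the inclusion of diagrams under the transgression functor and then identify it with the transgression of the pentagon for $\alpha$. More precisely, both the upper right and lower left paths of \erf{46} are obtained by transgressing the corresponding composite 1-isomorphisms built from $\mathcal{M}_{1,2}, \mathcal{M}_{12,3}, \mathcal{M}_{2,3}, \mathcal{M}_{1,23}$ over $G^3$, tensored with identities on the appropriate trivial bundle gerbes $\mathcal{I}_{\rho_{i,j}}$ (these introduce only the canonical trivializations $\mathscr{T}_{\mathcal{I}_\rho}\cong \trivlin$, which cancel out coherently thanks to $\Delta$-closedness of $\rho$ and the equality $\mathcal{I}_{\rho_\Delta}=\mathcal{I}_{\rho_{2,3}}\otimes\mathcal{I}_{\rho_{1,23}}=\mathcal{I}_{\rho_{1,2}}\otimes\mathcal{I}_{\rho_{12,3}}$). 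This reduction uses functoriality (Proposition \ref{prop6}(a) and (b)), monoidality of $\mathscr{T}$, and naturality \erf{61} with respect to $\Delta_i: G^3\to G^2$ and the three projections $LG^3\to LG$.

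After this bookkeeping the two paths in \erf{46} become $\mathscr{T}$ applied to the two composites appearing as source and target of the bimodule 2-morphism $\alpha$ in Definition \ref{def1}. Since $\alpha$ is a 2-isomorphism between these composites, Proposition \ref{prop6}(c) implies that their transgressions agree as isomorphisms of $\ueins$-bundles over $LG\times LG\times LG$. Thus \erf{46} commutes and $(\mathscr{T}_{\mathcal{G}},\phi_{\mathcal{M}})$ is a multiplicative principal $\ueins$-bundle.

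The main obstacle in executing this plan is keeping track of the coherence isomorphisms: the trivializations $\mathscr{T}_{\mathcal{I}_\rho}\cong \trivlin$, the monoidality isomorphisms \erf{49}, and the pullback isomorphisms \erf{61} must fit together so that the pentagon axiom for $\alpha$ transports exactly to \erf{46}. This is a diagram chase, verified by unwinding the monoidal, natural and functorial coherences of $\mathscr{T}$; the pentagon axiom for $\alpha$ (Figure \ref{fig1}) is used precisely once, at the final step where the two composites are identified via Proposition \ref{prop6}(c).
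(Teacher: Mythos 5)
Your proof is correct and follows essentially the same route as the paper: reduce the two paths of \erf{46} via the monoidality isomorphisms \erf{49}, the naturality isomorphisms \erf{61} and the functoriality of $\mathscr{T}$ to the transgressions of the source and target of $\alpha$, and then conclude by Proposition \ref{prop6}(c). One small terminological slip: what you use at the final step is the 2-isomorphism $\alpha$ itself (its mere existence), not its pentagon axiom --- the pentagon, which lives over $G^4$, plays no role in this lemma.
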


\begin{proof}
We have to show that the associated diagram \erf{46} commutes. This is due to the 2-isomorphism $\alpha$ in the structure of the multiplicative bundle gerbe, whose transgression gives by Proposition \ref{prop6} (c) an equality. Explicitly, we obtain a commutative diagram
\begin{equation*}
\alxydim{@R=1cm@C=0.6cm}{Lp_1^{*}\mathscr{T}_{\mathcal{G}} \otimes Lp_2^{*}\mathscr{T}_{\mathcal{G}} \otimes Lp_3^{*}\mathscr{T}_{\mathcal{G}} \ar[dr] \ar[ddd]_{\id\otimes m_{23}^{*}\phi_{\mathcal{M}}} \ar[rrrr]^-{m_{12}^{*}\phi_{\mathcal{M}} \otimes \id} &&&& Lm_{12}^{*}\mathscr{T}_{\mathcal{G}} \otimes Lp_3^{*}\mathscr{T}_{\mathcal{G}} \ar[ddd]^{m_{12,3}^{*}\phi_{\mathcal{M}}} \\ & \hspace{-1cm}\mathscr{T}_{\mathcal{G}_1 \otimes \mathcal{G}_2 \otimes \mathcal{G}_3} \ar[rr]^-{\mathscr{T}_{\mathcal{M}_{1,2} \otimes \id}} \ar[d]_{\mathscr{T}_{\id \otimes \mathcal{M}_{2,3}}} && \mathscr{T}_{\mathcal{G}_{12} \otimes \mathcal{G}_3 \otimes \mathcal{I}_{\rho_{1,2}}} \ar[ur]\hspace{-1cm} \ar[d]^{\mathscr{T}_{\mathscr{T}_{\mathcal{M}_{12,3}}}} & \\ & \hspace{-1cm}\mathscr{T}_{\mathcal{G}_1 \otimes \mathcal{G}_{23} \otimes \mathcal{I}_{\rho_{2,3}}} \ar[rr]_-{\mathscr{T}_{\mathcal{M}_{1,23}}} && \mathscr{T}_{\mathcal{G}_{123} \otimes \mathcal{I}_{\rho_{\Delta}}} \ar[dr]\hspace{-1cm} & \\  Lp_1^{*}\mathscr{T}_{\mathcal{G}} \otimes Lm_{23}^{*}\mathscr{T}_{\mathcal{G}} \ar[ur] \ar[rrrr]_-{m_{1,23}^{*}\phi_{\mathcal{M}}} &&&& Lm_{123}^{*}\mathscr{T}_{\mathcal{G}}}
\end{equation*}
of bundle isomorphisms over $LG \times LG \times LG$: the small subdiagram in the middle is the transgression of $\alpha$, and the other subdiagrams are commutative due to the naturality and the coherence of the isomorphisms \erf{49} and \erf{61}. 
\end{proof}

Summarizing, we have  
\begin{theorem}
\label{th3}
Let $(\mathcal{G},\mathcal{M},\alpha)$ be a  multiplicative bundle gerbe with connection over a Lie group $G$. Then, $\mathscr{T}_{\mathcal{G}}$ is a Fréchet Lie group and 
\begin{equation*}
\alxy{1 \ar[r] & \ueins \ar[r] & \mathscr{T}_{\mathcal{G}} \ar[r]^-{p} & LG \ar[r]
&1}
\end{equation*}
is a  central extension of  $LG$.
\end{theorem}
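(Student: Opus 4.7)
The proof plan is essentially to apply Grothendieck's correspondence, as reviewed in detail in the paragraphs immediately preceding the theorem, to the multiplicative $\ueins$-bundle $(\mathscr{T}_{\mathcal{G}}, \phi_{\mathcal{M}})$ over $LG$ that was constructed in the lemma just proved. All the substantive work has already been done; what remains is to assemble the pieces.

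First, I would note that $LG$ inherits a Fr\'echet Lie group structure from the Lie group structure on $G$ via pointwise multiplication, which is smooth in the Fr\'echet topology on $LG$ described earlier (this is a standard consequence of the exponential law, cf.\ Hamilton~\cite{hamilton1}). By Proposition~\ref{prop7}, $\mathscr{T}_{\mathcal{G}} \to LG$ is a principal $\ueins$-bundle, and by the preceding lemma the bundle isomorphism $\phi_{\mathcal{M}}: Lp_1^{*}\mathscr{T}_{\mathcal{G}} \otimes Lp_2^{*}\mathscr{T}_{\mathcal{G}} \to Lm^{*}\mathscr{T}_{\mathcal{G}}$ makes the pair $(\mathscr{T}_{\mathcal{G}}, \phi_{\mathcal{M}})$ into a multiplicative principal $\ueins$-bundle, i.e.\ the coherence diagram \erf{46} commutes.

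Second, I would invoke the three-step construction of the central extension spelled out above the theorem, applied verbatim to $P := \mathscr{T}_{\mathcal{G}}$, $H := LG$, and $\phi := \phi_{\mathcal{M}}$: the product is defined by $\phi_{\mathcal{M}}$ and is associative because \erf{46} commutes; the identity is the element $e$ in the fibre $\mathscr{T}_{\mathcal{G}}|_{1_{LG}}$ singled out by the $\ueins$-torsor isomorphism $\phi_{\mathcal{M}}|_{(1,1)}$; and the inversion is constructed from the canonical dual pairing $d: \mathscr{T}_{\mathcal{G}}^{\vee} \otimes \mathscr{T}_{\mathcal{G}} \to \trivlin$ together with $\phi_{\mathcal{M}}^{-1}$ pulled back along $j(\gamma) = (\gamma, \gamma^{-1})$. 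Smoothness of multiplication and inversion follows from the smoothness of $\phi_{\mathcal{M}}$ (which is a bundle morphism of Fr\'echet manifolds) combined with the smoothness of the multiplication and inversion of $LG$.

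Finally, the inclusion $\ueins \hookrightarrow \mathscr{T}_{\mathcal{G}}: z \mapsto e \cdot z$ is a smooth embedding onto the fibre over $1_{LG}$, and the short sequence $1 \to \ueins \to \mathscr{T}_{\mathcal{G}} \to LG \to 1$ is exact since $p$ is the projection of the principal $\ueins$-bundle. Centrality is automatic because the $\ueins$-action on $\mathscr{T}_{\mathcal{G}}$ is equivariant with respect to both tensor factors under $\phi_{\mathcal{M}}$. There is no real obstacle here beyond verifying that Grothendieck's construction, which is classically formulated for finite-dimensional Lie groups, carries over to the Fr\'echet setting; this is ensured by Lemma~\ref{lem6} (which guaranteed smoothness of the transition functions defining $\mathscr{T}_{\mathcal{G}}$ as a Fr\'echet bundle in the first place) and by the monoidality of transgression recorded just before the theorem.
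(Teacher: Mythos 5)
Your proposal is correct and follows exactly the route the paper takes: the paper states Theorem~\ref{th3} as a summary of the preceding material, namely Proposition~\ref{prop7} (that $\mathscr{T}_{\mathcal{G}}$ is a principal $\ueins$-bundle over $LG$), the three-step review of Grothendieck's correspondence between multiplicative $\ueins$-bundles and central extensions of Fr\'echet Lie groups, and the lemma establishing that $(\mathscr{T}_{\mathcal{G}},\phi_{\mathcal{M}})$ is a multiplicative $\ueins$-bundle. Your assembly of these pieces, including the remarks on smoothness and centrality, matches the paper's intended argument.
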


Theorem \ref{th3} generalizes the geometrical construction of \cite{brylinski4}, Thm. 5.4, from simply-connected Lie groups to arbitrary Lie groups, for which case Brylinski and McLaughlin switch to an abstract cohomological point of view (see \cite{brylinski4}, Thm. 5.1.2). Below we show explicitly that our construction reproduces (for $G$ simply-connected)   the central extensions of Pressley and Segal \cite{pressley1} and of Mickelsson \cite{mickelsson1}.

To start with, let us briefly write out the multiplication on $\mathscr{T}_{\mathcal{G}}$ in terms of trivializations of $\mathcal{G}$. 
Suppose $\mathcal{T}_1$ and $\mathcal{T}_2$ are trivializations representing elements in the fibres of $\mathscr{T}_{\mathcal{G}}$ over loops $\gamma_1$ and $\gamma_2$. Then, their product $\phi_{\mathcal{M}}(\mathcal{T}_1,\mathcal{T}_2)$  is represented by  the trivialization
\begin{equation}
\label{62}
\alxydim{@C=2cm}{(\gamma_1\gamma_2)^{*}\mathcal{G}  \ar[r]^-{\Delta_{\gamma_1,\gamma_2}^{*}\mathcal{M}^{-1}} & \gamma_1^{*}\mathcal{G}
\otimes \gamma_2^{*}\mathcal{G} 
\ar[r]^-{\mathcal{T}_1 \otimes \mathcal{T}_2} & \mathcal{I}_0\text{,}}
\end{equation}
where $\Delta_{\gamma_1,\gamma_2}: S^1 \to G^2$ is the loop $\Delta_{\gamma_1,\gamma_2}(z) := (\gamma_1(z),\gamma_2(z))$.

Next we recall from Example \ref{ex4} that for $G$ compact, simple and simply-connected there exist canonical multiplicative bundle gerbes $\mathcal{G}^k$ with connection for each  $k\in\Z$. Thus, Theorem \ref{th3} produces a family $\mathscr{T}_{\mathcal{G}^k}$ of central extensions of $LG$. The following discussion shows what they are.

In general, central extensions of a (Fréchet) Lie group $H$ by an abelian Lie group $A$ are classified by  $H^2(BH,\underline{A})$ (e.g. \cite{brylinski3}, Prop. 1.6).
According to our description of the cohomology of classifying spaces in terms of \v Cech cohomology we choose open covers $\mathscr{U}_1$ of $H$ and $\mathscr{U}_2$ of $H^2$, which are compatible with the face maps in the sense of Section \ref{sec3}. A cocycle in $H^2(BH,\underline{A})$  consists then of  \v Cech cochains 
\begin{equation*} 
g \in \check C^1(\mathscr{U}_1,\underline{A})
\quad\text{ and }\quad
h\in\check C^0(\mathscr{U}_2,\underline{A})
\end{equation*}
such that the cocycle conditions
\begin{equation*}
\delta g = 1
\quad\text{, }\quad
\Delta g = \delta h
\quad\text{ and }\quad
\Delta h =1
\end{equation*}
are satisfied. The \v Cech cocycle
$g$ is a classifying cocycle for the principal $A$-bundle which underlies the given central extension. Thus, the homomorphism
\begin{equation}
\label{45}
\alxydim{}{H^2(BH,A) \ar[r] &  H^1(H,\underline{A})}
\end{equation}
which is induced by the projection $(g,h) \mapsto g$ takes the class of the central extension  to the  class of the underlying principal bundle. From the cochain
$h$ one can extract the characteristic class of the underlying Lie algebra extension \cite{brylinski3}.

Now we specialize to the central extensions of Theorem \ref{th3}, in which case $H:=LG$ and $A=\ueins$. Here we can combine  \erf{45} with the connecting homomorphism of the exponential sequence and obtain a homomorphism
\begin{equation}
\label{64}
H^2(BLG,\ueinssheaf)\to H^2(LG,\Z)\text{.} 
\end{equation}
If we regard Theorem \ref{th3} as a homomorphism
\begin{equation*}
\tilde\mu: H^3(BG,\mathcal{D}^{bi}(2)) \to H^2(BLG,\ueinssheaf)\text{,}
\end{equation*}
we obtain
immediately
\begin{proposition}
The homomorphism $\tilde\mu$ lifts the homomorphism $\mu$, i.e. 
\begin{equation*}
\alxydim{}{ H^3(BG,\mathcal{D}^{bi}(2)) \ar[d]_{\mathrm{DD}} \ar[r]^-{\tilde\mu} & H^2(BLG,\ueinssheaf) \ar[d]^{\erf{64}} \\  H^3(G,\Z) \ar[r]_-{\mu} & H^2(LG,\Z)}
\end{equation*}
is a commutative diagram.
\end{proposition}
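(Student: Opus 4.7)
The plan is to show that both composites in the square evaluate to the first Chern class $c_1(\mathscr{T}_{\mathcal{G}}) \in H^2(LG,\Z)$ of the transgressed principal $\ueins$-bundle. Fix a class $\xi \in H^3(BG,\mathcal{D}^{bi}(2))$ and represent it, via Proposition \ref{prop8}, by a multiplicative bundle gerbe with connection $(\mathcal{G},\mathcal{M},\alpha)$.

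For the left-bottom route, I would observe that $\mathrm{DD}(\xi)$ is by definition the Dixmier-Douady class of the underlying bundle gerbe $\mathcal{G}$, and then by the very definition of $\mu$ in \erf{24}, the image $\mu(\mathrm{DD}(\xi))$ equals $c_1(\mathscr{T}_{\mathcal{G}})$. This step is essentially tautological.

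For the top-right route, Theorem \ref{th3} together with the review of Grothendieck's correspondence identifies $\tilde\mu(\xi)\in H^2(BLG,\underline{\ueins})$ as the class classifying the central extension whose total space is the transgressed bundle $\mathscr{T}_{\mathcal{G}}$. The homomorphism \erf{64} was defined as the composite of \erf{45}, which by construction takes the class of a central extension to the class in $H^1(LG,\underline{\ueins})$ of its underlying principal bundle, with the exponential connecting homomorphism $H^1(LG,\underline{\ueins})\to H^2(LG,\Z)$, which sends the classifying class of a $\ueins$-bundle to its first Chern class. Since the underlying principal bundle of the central extension $\mathscr{T}_{\mathcal{G}}$ is $\mathscr{T}_{\mathcal{G}}$ itself, this composite also produces $c_1(\mathscr{T}_{\mathcal{G}})$, and commutativity follows.

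The only nontrivial verification, and the step I would expect to have to spell out, is that the projection \erf{45} really does extract the class of $\mathscr{T}_{\mathcal{G}}$ from the \v{C}ech cocycle representative of $\tilde\mu(\xi)$. This means checking at the cocycle level that the component $g \in \check{C}^1(\mathscr{U}_1,\underline{\ueins})$ of the central-extension cocycle coincides with a classifying cocycle for $\mathscr{T}_{\mathcal{G}}$ as a principal $\ueins$-bundle. I would carry this out by reading off $g$ from the local trivializations $\varphi_\tau$ built before Proposition \ref{prop7}: the transition functions are the holonomies of the comparison bundles \erf{51}, which by the explicit formula \erf{62} for the product on $\mathscr{T}_{\mathcal{G}}$ are precisely the entries appearing in $\tilde\mu(\xi)$ under the projection \erf{45}. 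Once this identification is in place the diagram commutes by the two definitional computations above.
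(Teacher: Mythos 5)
Your proposal is correct and matches the paper's reasoning: the paper states this proposition as following \emph{immediately} from the definitions, precisely because both composites evaluate to the first Chern class of $\mathscr{T}_{\mathcal{G}}$ --- the bottom-left route by the definition of $\mu$ in \erf{24}, and the top-right route because \erf{45} is asserted, in the review of the classification of central extensions, to return the class of the underlying principal $\ueins$-bundle, which for $\tilde\mu(\xi)$ is $\mathscr{T}_{\mathcal{G}}$ itself. The extra cocycle-level verification you flag is something the paper already builds into its description of \erf{45}, so your argument is the same one, merely spelled out in more detail.
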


Now we recall (\cite{pressley1}, Prop. 4.4.6) that for $G$ compact, simple and simply-connected, there is a universal central extension characterized such that the first Chern class of the underlying principal $\ueins$-bundle is the image of the generator $1 \in \Z = H^3(G,\Z)$ under the integration over the fibre. Since this generator is the Dixmier-Douady class of $\mathcal{G}^1$, we see from Lemma \ref{lem5} that $\mathscr{T}_{\mathcal{G}^{1}}$ is the dual of the universal central extension.  Thus,
\begin{corollary}
\label{co2}
Let $G$ be compact, simple and simply-connected. Then, the central extension $\mathscr{T}_{\mathcal{G}^k}$ is the dual of the $k$-th power of the universal central extension of the loop group $LG$. 
\end{corollary}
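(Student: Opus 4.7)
The plan is to reduce the general case to the already settled case $k = 1$, which is established in the paragraph immediately preceding the corollary: Lemma~\ref{lem5} together with the fact that $\mathrm{DD}(\mathcal{G}^1) = 1 \in H^3(G,\Z)$ identifies $\mathscr{T}_{\mathcal{G}^1}$ with the dual $\widetilde{LG}{}^{\vee}$ of Pressley--Segal's universal central extension $\widetilde{LG}$ of $LG$. The reduction will proceed by combining the uniqueness in Corollary~\ref{co1} with the monoidality of the transgression functor $\mathscr{T}$.

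First I would compare $\mathcal{G}^k$ with the $k$-fold tensor power of $\mathcal{G}^1$ as a multiplicative bundle gerbe with connection. Both carry curvature $H_k = k\eta$ and bimodule curvature $k\rho$, so both correspond to the element $(k\eta, k\rho) \in M_\Z^3(G)$. Since $G$ is compact, simple and simply-connected, Corollary~\ref{co1} provides a multiplicative 1-isomorphism between them. Applying transgression and using the monoidal isomorphism \erf{49} together with the base-change isomorphism \erf{61}, I would obtain isomorphisms of central extensions
\begin{equation*}
\mathscr{T}_{\mathcal{G}^k} \;\cong\; (\mathscr{T}_{\mathcal{G}^1})^{\otimes k} \;\cong\; (\widetilde{LG}{}^{\vee})^{\otimes k} \;\cong\; (\widetilde{LG}{}^{\otimes k})^{\vee},
\end{equation*}
which is the statement of the corollary. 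Here the middle step uses the $k=1$ case quoted above, and the last step is the standard identity $(A^{\vee})^{\otimes k} \cong (A^{\otimes k})^{\vee}$ in the monoidal category of central extensions.

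The main obstacle is to verify that this chain of isomorphisms is not merely one of underlying $\ueins$-bundles but genuinely one of central extensions. Concretely, one must check that the monoidal isomorphism \erf{49} intertwines the pentagon axiom for $\alpha$ on the bundle gerbe side with the associativity diagram \erf{46} on the $\ueins$-bundle side, thereby promoting $\mathscr{T}$ to a monoidal functor from multiplicative bundle gerbes with connection to central extensions of $LG$. This is a coherence check structurally identical to the diagram chase already used to show that $(\mathscr{T}_{\mathcal{G}}, \phi_{\mathcal{M}})$ is multiplicative, relying on the naturality of \erf{49} and \erf{61} and on Proposition~\ref{prop6}(c) applied to the pullbacks of $\alpha$ to $LG^3$. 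It is routine in principle but has not been spelled out explicitly in the text.
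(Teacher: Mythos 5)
Your argument is correct, but for $k>1$ it takes a genuinely different route from the paper's. The paper treats all $k$ uniformly by the same Chern-class computation you quote for $k=1$: Lemma \ref{lem5} gives that the first Chern class of $\mathscr{T}_{\mathcal{G}^k}$ is $(-k)$ times the fibre-integration of the generator of $H^3(G,\Z)$, and the Pressley--Segal characterization (Prop.\ 4.4.6 of that book), i.e.\ the fact that for $G$ compact, simple and simply-connected a central extension of $LG$ is pinned down by the first Chern class of its underlying bundle, then identifies $\mathscr{T}_{\mathcal{G}^k}$ directly as the dual of the $k$-th power --- no tensor decomposition of $\mathcal{G}^k$ is needed. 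You instead reduce to $k=1$ via Corollary \ref{co1} and the monoidality of $\mathscr{T}$. This is sound: $\mathcal{G}^k$ and $(\mathcal{G}^1)^{\otimes k}$ both realize $(k\eta,k\rho)\in M_{\Z}^3(G)$, so Corollary \ref{co1} supplies a multiplicative 1-isomorphism, and its transgression is an isomorphism of central extensions by Proposition \ref{prop6}(c) applied to the 2-isomorphism $\beta$ of Definition \ref{def2}. What your route costs is precisely the coherence you flag --- that \erf{49} and \erf{61} carry the tensor-product multiplicative structure on $(\mathcal{G}^1)^{\otimes k}$ (an object the paper never constructs explicitly, though its Deligne cocycle is just the $k$-fold sum) to the $k$-th tensor power of the central extension $\mathscr{T}_{\mathcal{G}^1}$. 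What it buys is that the classification of central extensions of $LG$ by their Chern class is invoked only once, at $k=1$, with all higher $k$ handled upstairs by the uniqueness statement of Corollary \ref{co1}; this makes the multiplicativity of the identification more transparent than the paper's one-line ``Thus''.
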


This duality can alternatively be expressed in a  nice geometrical way.
The universal central extension of $LG$ has another realization due to Mickelsson \cite{mickelsson1}, emerging from conformal field theory. It is defined as the set of pairs $(\phi,z)$ consisting of a smooth map $\phi: D^2 \to G$ defined on the unit disc and a complex number $z\in \ueins$, subject to the equivalence relation
\begin{equation*}
(\phi,z) \sim (\phi',z') 
\quad\Leftrightarrow\quad
\phi|_{\partial D^2} = \phi'|_{\partial D^2}
\quad\text{ and }\quad
z' = z \cdot \mathrm{e}^{2\pi\im S_{\mathrm{WZ}}(\phi_{\sharp})}\text{.}
\end{equation*}
Here $\phi_{\sharp}: S^2 \to G$ is the continuous and piecewise smooth map obtained by gluing the domains of $\phi$ and $\phi'$ along their common boundary $S^1$ (the latter with reversed orientation),  and $S_{\mathrm{WZ}}$ is the Wess-Zumino term, which is well-defined  for simple and simply-connected Lie groups. We have a projection 
\begin{equation*}
p:\mathcal{E} \to LG: (\phi,z) \mapsto \phi|_{\partial D^2}\text{,}
\end{equation*}
and one can show that its fibres are  $\ueins$-torsors, and that $\mathcal{E}$ is a locally trivial bundle over $LG$. 

The pairing between $\mathcal{E}$ and $\mathscr{T}_{\mathcal{G}^1}$ that expresses the duality of Corollary \ref{co2} is a bundle isomorphism
\begin{equation}
\label{56}
\mathcal{E} \otimes \mathscr{T}_{\mathcal{G}^1} \to LG \times \ueins
\end{equation}
 over $LG$, which we define as follows. For representatives $(\phi,z)$ and $\mathcal{T}: \gamma^{*}\mathcal{G}^1 \to \mathcal{I}_{0}$ of elements in the fibre over a loop $\gamma$, let $\mathcal{S}: \phi^{*}\mathcal{G} \to \mathcal{I}_{\sigma}$ be any trivialization of the pullback of $\mathcal{G}^1$ to $D^2$, and let $T$ be the $\ueins$-bundle over $\partial D^2$ defined as $T:=\bun(\mathcal{T} \circ \mathcal{S}^{-1}|_{\partial D^2})$. Then, the pairing  \erf{56} is given by
\begin{equation*}
(\phi,z) \otimes \mathcal{T} \mapsto z \cdot \exp \left ( 2\pi\im \int_{D^2} \sigma \right ) \cdot \mathrm{Hol}_{T}(\partial D^2) \in \ueins\text{.}
\end{equation*}
This is actually nothing but the D-brane holonomy  \cite{carey2} for oriented surfaces with boundary, and  hence independent of the choice of $\mathcal{S}$, see \cite{waldorf1}. The choice of another representative $\mathcal{T}'$ leads to an isomorphic  bundle $T'$ with the same holonomies as $T$. For the choice of another representative $(\phi',z')$ let $\mathcal{R}: \phi_{\sharp}^{*}\mathcal{G} \to \mathcal{I}_{\omega}$ be a trivialization with restrictions $\mathcal{S}$ and $\mathcal{S}'$ on the domains of $\phi$ and $\phi'$, respectively. Now, 
\begin{equation*}
\mathrm{e}^{2\pi\im S_{\mathrm{WZ}}(\phi_{\sharp})} \stackrel{(*)}{=} \mathrm{Hol}_{\mathcal{G}^1}(\phi_{\sharp}) \stackrel{(\star)}{=} \exp \left ( 2\pi\im \int_{S^2} \omega \right )= \exp \left ( 2\pi\im \int_{D^2} \sigma - 2\pi\im \int_{D^2} \sigma' \right )
\end{equation*}
where $(*)$ is the  relation between the Wess-Zumino term and the holonomy of the bundle gerbe $\mathcal{G}^1$ which underlies all the applications of bundle gerbes in conformal field theory \cite{carey5} and $(\star)$ is precisely the definition of this holonomy. All together, we see that the pairing \erf{56} is well-defined.

\

It is obvious that \erf{56} is $\ueins$-equivariant; in particular it is an isomorphism. Let us finally equip the $\ueins$-bundle $\mathcal{E}$  with a product, which is defined \cite{mickelsson1} by
\begin{equation*}
(\phi_1, z_1) \cdot (\phi_2, z_2) := (\phi_1\phi_2, z_1z_2 \cdot \exp \left ( 2\pi\im \int_{D^2} \Phi^{*}\rho \right )  )  \text{,}
\end{equation*}
with $\Phi: D^2 \to G \times G$ defined by $\Phi(s) := (\phi_1(s),\phi_2(s))$, and $\rho$  the   2-form \erf{5}. It is left   to the reader to verify that the pairing \erf{56} indeed respects the products on $\mathcal{E}$ and $\mathscr{T}_{\mathcal{G}^1}$.

\subsection{Bundle 2-Gerbes for Chern-Simons Theory}

We construct from a multiplicative bundle gerbe  with connection over a Lie group $G$ and a principal $G$-bundle  with connection $A$ over some smooth manifold $M$ a bundle 2-gerbe $\mathbb{G}$ with connection over $M$. We show that the holonomy of this 2-gerbe around closed oriented three-dimensional manifolds coincides with (the exponential of) the Chern-Simons action for the connection $A$.

Let $G$ be a Lie group and $\mathfrak{g}$ its Lie algebra. Let $p:E \to M$ be a principal $G$-bundle over a smooth manifold $M$, and let $A\in\Omega^1(E,\mathfrak{g})$ be a connection on $E$. We recall that for every invariant  polynomial $P$ on $\mathfrak{g}$ of degree $l$ there exists a canonical invariant $(2l-1)$-form $TP(A)$ on $E$ such that 
\begin{equation}
\label{23}
\mathrm{d}TP(A)=P(\Omega_A^l)\text{,}
\end{equation}
where $\Omega_A := \mathrm{d}A + [A \wedge A]$ is the curvature 2-form of $A$ (\cite{chern1}, Prop. 3.2). Commonly, Chern-Simons theory refers to the study of the form $TP(A)$ in the case $l=2$. In this case,  $TP(A)$ is 
\begin{equation*}
TP(A)= P(  A \wedge \mathrm{d}A)
+ \frac{2}{3}P( A \wedge
[A
\wedge A]  ) \in \Omega^3(E)\text{,}
\end{equation*}
and (\ref{23}) becomes
\begin{equation}
\label{25}
\mathrm{d}TP(A) = p^{*}F_A\text{,}
\end{equation}
where $F_A \in \Omega^4(M)$ is the  Pontryagin 4-form characterized uniquely by the condition that $p^{*}F_A= P( \Omega_A \wedge \Omega_A  )$.

In case that the manifold $M$ is closed, oriented and three-dimensional, and  the principal bundle $E$ admits a global smooth section $s:M \to E$, the \emph{Chern-Simons action} is defined by
\begin{equation}
\label{2}
Z_M(A) := \int_M s^{*}TP(A) \text{.}
\end{equation}
A sufficient condition for the existence of the section $s$ is that $G$ is simply-connected, but one is also interested in the non-simply connected case.  Dijkgraaf and Witten have made the following proposal \cite{dijkgraaf1}. One assumes that there is a four-dimensional compact oriented manifold $B$ with $\partial B=M$, together with a principal $G$-bundle $\tilde E$ with connection $\tilde A$ over $B$ such that $\tilde E|_M=E$ and $\tilde A|_E=\tilde A$.  Then,
\begin{equation}
\label{29}
Z_{M}(A) := \int_B F_{\tilde A}
\end{equation}
replaces the old definition (\ref{2}). The ambiguities coming from different choices of $B$, $\tilde E$ or $\tilde A$ take their values in $\Z$ so that $2\pi\im Z_{M}(A)$ is well-defined in $\ueins$. By Stokes' Theorem and (\ref{25}), the old expression (\ref{2}) is reproduced whenever the section $s$ exists.

The definition of the Chern-Simons action due to Dijkgraaf and Witten is an analog of the definition of the  Wess-Zumino term given by Witten \cite{witten1}. This term could later be identified as the holonomy of a bundle gerbe with connection \cite{carey5}. One advantage of this identification is that the possible Wess-Zumino terms have the same classification as bundle gerbes with connection, which is  a purely geometrical problem \cite{gawedzki2}. 

Motivated by this observation, also the Chern-Simons action (\ref{2}) should be realized as a holonomy; now of a bundle 2-gerbe and taken around the three-manifold $M$. Let us first recall some facts about bundle 2-gerbes.

\begin{definition}[\cite{stevenson2}] 
A \emph{bundle 2-gerbe} over a smooth manifold $M$ is a surjective submersion $\pi:Y \to M$, a bundle gerbe $\mathcal{H}$ over $Y^{[2]}$, a 1-iso\-mor\-phism
\begin{equation*}
\mathcal{E}:\pi_{12}^{*}\mathcal{H} \otimes  \pi_{23}^{*}\mathcal{H} \to \pi_{13}^{*}\mathcal{H}
\end{equation*}
of bundle gerbes over $Y^{[3]}$, and a 2-isomorphism
\begin{equation*}
\alxydim{@C=1.5cm@R=1.2cm}{p_{12}^{*}\mathcal{H} \otimes p_{23}^{*}\mathcal{H} \otimes p_{34}^{*}\mathcal{H} \ar[d]_{p_{123}^{*}\mathcal{E} \otimes \id} \ar[r]^-{\id \otimes p_{234}^{*}\mathcal{E}} & p_{12}^{*}\mathcal{H} \otimes p_{24}^{*}\mathcal{H} \ar@{=>}[dl]|*+{\mu} \ar[d]^{p_{124}^{*}\mathcal{E}} \\ p_{13}^{*}\mathcal{H} \otimes p_{34}^{*}\mathcal{H} \ar[r]_-{p_{134}^{*}\mathcal{E}} & p_{14}^{*}\mathcal{H}}
\end{equation*} 
such that $\mu$ satisfies the natural pentagon axiom. A \emph{connection} on a bundle 2-gerbe is a 3-form $C\in\Omega^3(Y)$ together with a connection on $\mathcal{H}$ of curvature
\begin{equation}
\label{26}
\mathrm{curv}(\mathcal{H}) = \pi_2^{*}C - \pi_1^{*}C\text{,}
\end{equation}
such that $\mathcal{E}$ and $\mu$ are 1- and 2-isomorphism of bundle gerbes with connection. 
\end{definition}

Generalizing the Dixmier-Douady class of a bundle gerbe, every bundle 2-gerbe $\mathbb{G}$ has a characteristic class $\mathrm{CC}(\mathbb{G}) \in H^4(M,\Z)$.
Generalizing the trivial bundle gerbes $\mathcal{I}_{\rho}$ associated to 2-forms $\rho$ on $M$, there are trivial bundle 2-gerbes $\mathbb{I}_{H}$ associated to 3-forms $H\in \Omega^3(M)$ with $\mathrm{CC}(\mathbb{I}_H)=0$.

Suppose that $S$ is a closed oriented three-dimensional manifold and $\phi:S \to M$ is a smooth map. The pullback of any bundle 2-gerbe $\mathbb{G}$ with connection over $M$ along $\phi$ is isomorphic to a trivial bundle 2-gerbe $\mathbb{I}_H$ for some 3-form $H$. Then,
\begin{equation}
\label{28}
\mathrm{Hol}_{\mathbb{G}}(S) := \mathrm{exp}\left (2\pi\mathrm{i} \int_{S} H \right )
\end{equation}
is independent of the choice of $H$, and is called the \emph{holonomy} of $\mathbb{G}$ around $S$. The \emph{curvature} of a bundle 2-gerbe $\mathbb{G}$ with connection is the unique 4-form $\mathrm{curv}(\mathbb{G}) \in \Omega^4(M)$ which satisfies $\pi^{*}\mathrm{curv}(\mathbb{G}) = \mathrm{d}C$. A fundamental relation between the holonomy and the curvature of a bundle 2-gerbe with connection is the following: if $B$ is a compact, four-dimensional, oriented smooth manifold and $\Phi: B \to M$ is a smooth map,
\begin{equation}
\label{30}
\mathrm{Hol}_{\mathbb{G}}(\partial B) = \exp \left (2\pi\im \int_B \Phi^{*}\mathrm{curv}(\mathbb{G}) \right )\text{.}
\end{equation}

\
 
A bundle 2-gerbe  related to Chern-Simons theory has been constructed in \cite{johnson1}. It is then shown (\cite{johnson1}, Prop. 8.2) that this bundle 2-gerbe admits a connection whose holonomy is (the exponential of) the Chern-Simons action \erf{2}. In \cite{carey4} Johnson's bundle 2-gerbe is reproduced as the pullback of a \quot{universal} bundle 2-gerbe over $BG$ obtained using multiplicative bundle gerbes (without connection). 
The goal of this section is to provide a more systematical construction of bundle 2-gerbes \emph{with} connection using  multiplicative bundle gerbes \emph{with} connection.

For preparation, we recall that any principal $G$-bundle $E$ over $M$ defines a simplicial manifold $E^{\bullet}$ whose instances are the fibre products $E^{[k]}$ of $E$ over $M$. There is a canonical simplicial map $g: E^{k} \to G^{k-1}$ into the simplicial manifold $G^{\bullet}$, which extends the \quot{transition function} $g:E^{[2]} \to G$ defined by $x \cdot g(x,y) = y$ for all $(x,y)\in E^{[2]}$. It is useful to recall that the geometric realization 
\begin{equation*}
\xi:=|g|: |E^{\bullet}| \to |G^{\bullet}|
\end{equation*}
of $g$ is a classifying map
for the bundle $E$ under the homotopy equivalence $M \cong |E^{\bullet}|$ and with $BG := |G^{\bullet}|$. 

Differential forms on the simplicial manifold $E^{\bullet}$ arrange into a complex
\begin{equation*}
\alxydim{@C=1.2cm}{0 \ar[r] & \Omega^k(M) \ar[r]^-{p^{*}=\Delta} & \Omega^k(E) \ar[r]^-{\Delta} & \Omega^k(E^{[2]})\ar[r]^-{\Delta} & \Omega^k(E^{[3]}) \ar[r] & ...}
\end{equation*}
whose differential is the alternating sum \erf{18}.  It commutes with the exterior derivative so that \begin{equation*}
\mathrm{d}\Delta TP(A)\stackrel{\text{(\ref{25})}}{=}\Delta(p^{*}F_A)=\Delta^2(F_A)=0\text{.}
\end{equation*}
Hence, $\Delta TP(A)$ defines  a cohomology class in $H^3_{\mathrm{dR}}(E^{[2]})$, and one can calculate that this class coincides with the  class of $g^{*}H$, where 
\begin{equation}
\label{12}
H := \frac{1}{6}P(\theta \wedge[\theta \wedge\theta])\text{.}
\end{equation}
This coincidence can be expressed explicitly by
\begin{equation}
\label{27}
\Delta TP(A)=g^{*}H + \mathrm{d}\omega\text{,}
\end{equation}
where the coboundary term is provided by the 2-form
\begin{equation*}
\omega :=-  P( g^{*} \bar\theta \wedge p_1^{*}A ) \in
\Omega^2(E^{[2]})\text{.}
\end{equation*}
It will be important to notice that there exists a unique 2-form $\rho\in\Omega^2(G^2)$ which satisfies
\begin{equation}
\label{53}
g^{*}\rho + \Delta\omega =0\text{.}
\end{equation}
One can check explicitly that this 2-form is given by 
\begin{equation}
\label{47}
\rho :=  \frac{1}{2}P( p_1^{*}\theta \wedge p_2^{*}\bar\theta)\text{.}
\end{equation}

For the following construction of the  bundle 2-gerbe $\mathbb{CS}_E(\mathcal{G},\mathcal{M},\alpha)$ we assume that the following structure is given:
\begin{enumerate}
\item
A multiplicative bundle gerbe $(\mathcal{G},\mathcal{M},\alpha)$  with connection over some Lie group $G$.

\item
An invariant polynomial $P$ of degree two on the Lie algebra $\mathfrak{g}$, such that the curvature $H$ of $\mathcal{G}$ and the curvature $\rho$ of $\mathcal{M}$ are given by \erf{12} and \erf{47}.

\item
A principal $G$-bundle $E$ with connection $A$ over a smooth manifold $M$.

\end{enumerate}

\begin{example}
For $G$  compact, simple and simply-connected, one can choose one of the canonical bundle gerbes $\mathcal{G}^k$ equipped with their canonical multiplicative structure from Example \erf{ex4}. If  $\left \langle -,-  \right \rangle$ is the invariant bilinear form on $\mathfrak{g}$ normalized like described there, one chooses  $P(X,Y) := k\left \langle X,Y  \right \rangle$. 
\end{example}

The first step in the construction of the bundle 2-gerbe $\mathbb{CS}_E(\mathcal{G},\mathcal{M},\alpha)$ is the bundle gerbe 
\begin{equation*}
\mathcal{H} := g^{*}\mathcal{G} \otimes \mathcal{I}_{\omega}
\end{equation*}
with connection over $E^{[2]}$. Using the commutation relations between the simplicial map $g$ and the projections $E^{[3]} \to E^{[2]}$, namely
\begin{equation}
\label{6}
\Delta_3 \circ g = g \circ p_{12}
\quad\text{, }\quad
\Delta_1 \circ g = g \circ p_{23}
\quad\text{ and }\quad
\Delta_2 \circ g = g \circ p_{13}\text{,}
\end{equation}
one obtains a 1-isomorphism $\mathcal{E}$ of bundle gerbes over $E^{[3]}$ by
\begin{equation*}
\alxydim{@C=1.3cm@R=1.1cm}{p_{12}^{*}\mathcal{H} \otimes p_{23}^{*}\mathcal{H}= g^{*}(p_{1}^{*}\mathcal{G} \otimes p_2^{*}\mathcal{G})
\otimes \mathcal{I}_{p_{13}^{*}\omega+\rho} \ar[r]^-{g^{*}\mathcal{M} \otimes
\id}  & g^{*}m^{*}\mathcal{G} \otimes \mathcal{I}_{p_{13}^{*}\omega}
= p_{13}^{*}\mathcal{H}\text{.}}
\end{equation*}
Finally we define a  2-isomorphism $\mu$ of bundle gerbes over $E^{[4]}$ by 
\begin{equation*}
\alxydim{@C=-1.3cm}{p_{12}^{*}\mathcal{H} \otimes p_{23}^{*}\mathcal{H} \otimes p_{34}^{*}\mathcal{H} \ar@{=}[dr] \ar[ddd]_{p_{123}^{*}\mathcal{E} \otimes \id} \ar[rrrr]^{\id \otimes p_{234}^{*}\mathcal{E}} &&&& p_{12}^{*}\mathcal{H} \otimes p_{24}^{*}\mathcal{H} \ar@{=}[dl] \ar[ddd]^{p_{124}^{*}\mathcal{E}} \\&g^{*}(\mathcal{G}_1 \otimes \mathcal{G}_2 \otimes \mathcal{G}_3 \otimes \mathcal{I}_{\tilde\rho}) \ar[d]_{g^{*}\mathcal{M}_{1,2} \otimes \id} \ar[rr]^{\id \otimes g^{*}\mathcal{M}_{2,3}} &\hspace{4cm}& g^{*}(\mathcal{G}_1 \otimes \mathcal{G}_{23} \otimes \mathcal{I}_{\rho_{1,23}}) \ar[d]^{g^{*}\mathcal{M}_{1,23}} \ar@{=>}[dll]|*+{g^{*}\alpha} &\\&g^{*}(\mathcal{G}_{12}  \otimes \mathcal{G}_3 \otimes \mathcal{I}_{\rho_{12,3}}) \ar[rr]_-{g^{*}\mathcal{M}_{12,3}} &&g^{*}\mathcal{G}_{123} \otimes \mathcal{I}_{p_{14}^{*}\omega} \ar@{=}[dr]  &\\ p_{13}^{*}\mathcal{H} \otimes p_{34}^{*}\mathcal{H} \ar@{=}[ur] \ar[rrrr]_{p_{134}^{*}\mathcal{E}} &&&& p_{14}^{*}\mathcal{H}\text{.}}
\end{equation*}

\begin{theorem}
\label{th2}
The surjective submersion $p:E \to M$, the 3-form $TP(A)$ over $E$, the bundle gerbe $\mathcal{H}$ with connection over $E^{[2]}$, the 1-isomorphism $\mathcal{E}$ and the 2-isomorphism $\mu$ define a bundle 2-gerbe $\mathbb{CS}_E(\mathcal{G},\mathcal{M},\alpha)$ with connection over $M$.  
It has the following properties:
\begin{enumerate}
\item[(a)]
Its characteristic class is the pullback of the multiplicative class of $(\mathcal{G},\mathcal{M},\alpha)$ along a classifying map $\xi: M \to BG$ for the bundle $E$,
\begin{equation*}
\mathrm{CC}(\mathbb{CS}_E(\mathcal{G},\mathcal{M},\alpha)) = \xi^{*}\mch(\mathcal{G},\mathcal{M},\alpha)\text{.}
\end{equation*}

\item[(b)]
Its curvature   is the Pontryagin 4-form of the connection $A$,
\begin{equation*}
 \mathrm{curv}(\mathbb{CS}_E(\mathcal{G},\mathcal{M},\alpha))= F_A\text{.}
\end{equation*}
\end{enumerate}
\end{theorem}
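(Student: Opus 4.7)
The proof breaks into three tasks: verifying that the listed data really form a bundle 2-gerbe with connection, computing the curvature in (b), and identifying the characteristic class in (a).

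For the bundle 2-gerbe structure, I would check three conditions. First, the curvature condition \erf{26} for $\mathcal{H}=g^{*}\mathcal{G}\otimes\mathcal{I}_{\omega}$ with $C:=TP(A)$ reduces to
\begin{equation*}
\mathrm{curv}(\mathcal{H})=g^{*}H+\mathrm{d}\omega \stackrel{\erf{27}}{=} \Delta TP(A)=p_2^{*}TP(A)-p_1^{*}TP(A)\text{,}
\end{equation*}
which is exactly \erf{26}. Second, for $\mathcal{E}$ to be well-defined as a 1-isomorphism of bundle gerbes with connection, the simplicial relations \erf{6} yield precisely the identifications written in the diagram defining $\mathcal{E}$, and the extra trivial factor produced by $g^{*}\mathcal{M}$ is controlled by the identity
\begin{equation*}
p_{12}^{*}\omega+p_{23}^{*}\omega+g^{*}\rho-p_{13}^{*}\omega=\Delta\omega+g^{*}\rho=0\text{,}
\end{equation*}
which is exactly \erf{53}. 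Third, the pentagon axiom for $\mu$ is the pullback along $g$ of the pentagon axiom for $\alpha$, together with naturality of the canonical equalities $\mathcal{I}_{\rho_1}\otimes\mathcal{I}_{\rho_2}=\mathcal{I}_{\rho_1+\rho_2}$ used on the edges of the pentagon.

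Assertion (b) is then immediate: the curvature of the bundle 2-gerbe is the unique 4-form $F\in\Omega^{4}(M)$ with $p^{*}F=\mathrm{d}C=\mathrm{d}(TP(A))$, and by \erf{25} this equals $p^{*}F_A$, so $F=F_A$.

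For assertion (a), I would argue cohomologically. Choose a sequence $\mathfrak{U}=\{\mathscr{U}_q\}$ of open covers of $G^{\bullet}$ compatible with the face maps (Section~\ref{sec3}) and a representing cocycle in $\mathrm{Del}^{3}_{\Delta}(\mathfrak{U},2)^{bi}$ for the class of $(\mathcal{G},\mathcal{M},\alpha)$ under Proposition~\ref{prop8}. Pulling this cocycle back along the simplicial map $g:E^{\bullet}\to G^{\bullet}$ and using \erf{27} and \erf{53} to insert the form data $TP(A)$ and $\omega$ produces a Deligne-type cocycle on $E^{\bullet}$ which, after descent along local sections of $p:E\to M$, is local data for the bundle 2-gerbe $\mathbb{CS}_E(\mathcal{G},\mathcal{M},\alpha)$. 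The characteristic class of $\mathbb{CS}_E$ is obtained by projecting this local data onto its $\ueinssheaf$-part and applying the exponential-sequence connecting homomorphism. By naturality, this projection commutes with pullback along $g$ and with descent to $M$, so the resulting class in $H^{4}(M,\Z)$ coincides with $\xi^{*}\mathrm{MC}(\mathcal{G},\mathcal{M},\alpha)$, where $\xi=|g|:M\simeq|E^{\bullet}|\to|G^{\bullet}|=BG$ is the classifying map for $E$.

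The main technical obstacle will be organising the descent from pulled-back local data on $E^{\bullet}$ to honest local data on $M$: one must choose the open covers $\mathscr{U}_q$ fine enough that their pullbacks along $g$ admit local sections of $p$, and one has to verify that the resulting projection onto $\ueinssheaf$ commutes with both pullback and descent up to controllable coboundaries. Granted this bookkeeping, everything else is direct inspection of local data and of the identities \erf{6}, \erf{25}, \erf{27}, \erf{53}, together with the pentagon axiom for $\alpha$.
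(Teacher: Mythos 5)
Your proposal is correct and follows essentially the same route as the paper: the curvature condition \erf{26} is verified by the identical computation via \erf{27}, the well-definedness of $\mathcal{E}$ rests on \erf{53}, the pentagon for $\mu$ is the pullback of the pentagon for $\alpha$, and (b) is immediate from \erf{25}. For (a) the paper simply observes in one sentence that all the structure (apart from the differential forms) is the pullback along the simplicial map $g$ realizing $\xi$; your cocycle-level descent argument is a more detailed spelling-out of that same observation rather than a different approach.
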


\begin{proof}
To prove that we have defined a bundle 2-gerbe it remains to check the condition (\ref{26}) and the pentagon axiom for the 2-isomorphism $\mu$. The latter follows directly from the  pentagon axiom for $\alpha$, see Figure \ref{fig1}. Condition (\ref{26}) is satisfied:
\begin{equation*}
\mathrm{curv}(\mathcal{H}) = g^{*}H + \mathrm{d}\omega \stackrel{\text{(\ref{27})}}{=} \Delta TP(A)=p_2^{*}TP(A) - p_1^{*}TP(A)\text{.}
\end{equation*}
Property  (a) follows from the fact that -- apart from the forms -- all the structure of the bundle 2-gerbe  is pullback of structure of the multiplicative bundle gerbe along the  simplicial map $g$ which realizes the classifying map $\xi$. (b) follows directly from  (\ref{25}).  
\end{proof}

Let us now study the holonomy of the bundle 2-gerbe from Theorem \ref{th2}. 
\begin{proposition}
\label{prop2}
Let $\mathbb{CS} := \mathbb{CS}_E(\mathcal{G},\mathcal{M},\alpha)$ be the bundle 2-gerbe with connection from Theorem \ref{th2}, associated to a principal $G$-bundle $E$ with connection $A$. 
\begin{enumerate}
\item[(1)]
Let $\phi: S \to M$ be a smooth map where $S$ is a three-dimensional, closed and oriented  manifold, and assume that $E$ has a section $s$ along $\phi$, i.e. a smooth map $s: S \to E$ such that $p\circ s = \phi$. Then,
\begin{equation*}
\mathrm{Hol}_{\mathbb{CS}}(S) = \mathrm{exp}\left (2\pi\mathrm{i} \int_{S} s^{*}TP(A) \right )\text{.}
\end{equation*}
\item[(2)]
Let $\Phi:B \to M$ be a smooth map where $B$ is compact, oriented and four-dimensional. Then,
\begin{equation*}
\mathrm{Hol}_{\mathbb{CS}}(\partial B) = \exp \left ( 2\pi\im \int_B \Phi^{*}F_A \right )\text{.}
\end{equation*}
\end{enumerate}
\end{proposition}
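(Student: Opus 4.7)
Part (2) is immediate: Theorem \ref{th2}(b) identifies the curvature $\mathrm{curv}(\mathbb{CS})=F_A$, and the general relation \erf{30} for the holonomy of a bundle 2-gerbe around a bounding manifold then gives $\mathrm{Hol}_{\mathbb{CS}}(\partial B)=\exp(2\pi\im\int_B\Phi^*F_A)$.

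For Part (1), the plan is to construct an explicit isomorphism $\phi^*\mathbb{CS}\cong\mathbb{I}_{s^*TP(A)}$ of bundle 2-gerbes with connection over $S$, from which the claim follows by the defining relation \erf{28} of holonomy. The section $s$ lifts to a section $\tilde s:S\to\phi^*E$ of the pullback surjective submersion. Reducing the pullback 2-gerbe along $\tilde s$, the surjective submersion becomes $\id_S$, the 3-form becomes $\tilde s^*TP(A)=s^*TP(A)$, and the bundle gerbe part becomes $(s,s)^*\mathcal{H}$. Since $\mathcal{H}=g^*\mathcal{G}\otimes\mathcal{I}_{\omega}$, since $g\circ(s,s)=1$ (the transition function vanishes on the diagonal), and since $(s,s)^*\omega=0$ (the formula for $\omega$ involves $g^*\bar\theta$, which vanishes on constant maps), this bundle gerbe equals $c_1^*\mathcal{G}$, the pullback of $\mathcal{G}$ along the constant map $c_1:S\to G$ at the identity. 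It is flat with $c_1^*H=0$; any trivialization of $\mathcal{G}|_{\{1\}}$ induces a trivialization of $c_1^*\mathcal{G}$ with vanishing curving 2-form.

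To complete the trivialization of the bundle 2-gerbe, one extends the trivialization of $c_1^*\mathcal{G}$ through the 1-isomorphism $\mathcal{E}$ and the 2-isomorphism $\mu$. Restricted along the relevant diagonals $(s,s,s):S\to E^{[3]}$ and $(s,s,s,s):S\to E^{[4]}$, the 1-isomorphism $\mathcal{E}$ reduces to the bimodule $\mathcal{M}|_{(1,1)}$ and $\mu$ reduces to $\alpha|_{(1,1,1)}$. Compatible trivialization choices exist, and the pentagon axiom for $\alpha$ provides exactly the coherence needed so that the resulting identification $\phi^*\mathbb{CS}\cong\mathbb{I}_{s^*TP(A)}$ is a legitimate isomorphism of bundle 2-gerbes with connection. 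Part (1) then follows from \erf{28}.

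The main technical obstacle is the bookkeeping of the various trivialization choices across $\mathcal{E}$ and $\mu$; this reduces to the pentagon axiom for $\alpha$ and is essentially forced once the trivialization of $\mathcal{G}|_{\{1\}}$ is made. As a consistency check, for $S=\partial B$ with $\phi$ extending to $\Phi:B\to M$ and $s$ extending to a section $\tilde s$ on $\tilde E\to B$, Stokes' theorem together with \erf{25} yields $\int_S s^*TP(A)=\int_B\Phi^*F_A$, so that Parts (1) and (2) are compatible via \erf{28} and \erf{30}.
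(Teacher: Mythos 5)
Your proposal is correct and follows essentially the same route as the paper: part (2) is the combination of Theorem \ref{th2}(b) with the general curvature--holonomy relation \erf{30}, and part (1) rests on the existence of a trivialization $\phi^{*}\mathbb{CS}\to\mathbb{I}_{s^{*}TP(A)}$ coming from the section of the surjective submersion, followed by \erf{28}. The only difference is one of detail -- the paper simply asserts that the section yields such a trivialization, whereas you spell out the reduction along $(s,s)$, the vanishing of $g\circ(s,s)$ and $(s,s)^{*}\omega$, and the coherence via the pentagon axiom -- all of which is consistent with the paper's argument.
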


\begin{proof}
In the first case there exists a  trivialization $\mathbb{T}: \phi^{*}\mathbb{CS} \to \mathbb{I}_{s^{*}TP(A)}$ since the surjective submersion of the bundle 2-gerbe $\phi^{*}\mathbb{CS}$ has a section. Then (\ref{28}) proves the assertion. The second case follows from \erf{30}. 
\
\end{proof}

Putting $S=M$  and $\phi=\id$, (1) reproduces the original definition (\ref{2}) of the Chern-Simons action. Putting $B=M$ and  $\Phi=\id$, (2) reproduces the extended definition \erf{29} of Dijkgraaf and Witten. 
This motivates the following definition of a Chern-Simons  theory. 
\begin{enumerate}
\item 
For $G$ any Lie group, a \emph{Chern-Simons theory with gauge group $G$} is an  invariant polynomial $P$ on the Lie algebra of $G$ of degree two and  a multiplicative bundle gerbe $(\mathcal{G},\mathcal{M},\alpha)$ with connection over $G$ whose curvature forms are
\begin{equation}
\label{54}
H = \frac{1}{6} P(  \theta \wedge [ \theta \wedge \theta] ) 
\quad\text{ and }\quad
\rho =  \frac{1}{2}P( p_1^{*}\theta \wedge p_2^{*}\bar\theta
)\text{.}
\end{equation}
Two Chern-Simons theories are considered to be equivalent if their polynomials coincide and their gerbes are multiplicatively isomorphic in the sense of Definition \ref{def2}. 

\item
The \emph{fields} are triples $(M,E)$ of a closed, oriented  three-dimensional manifold $M$ and a principal $G$-bundle $E$ over $M$ with connection. 
\item
A Chern-Simons theory assigns to each field $(M,E,A)$ the number
\begin{equation*}
\mathcal{A}(M,E) := \mathrm{Hol}_{\mathbb{CS}_E(\mathcal{G},\mathcal{M},\alpha)}(M) \in \ueins\text{,}
\end{equation*}
where $\mathbb{CS}_E(\mathcal{G},\mathcal{M},\alpha)$ is the bundle 2-gerbe from Theorem \ref{th2}.

\end{enumerate}
One consequence of this definition is a precise classification of Chern-Simons theories with gauge group $G$. We obtain as a consequence of Proposition \ref{prop3}

\begin{proposition}
\label{prop10}
Let $G$ be an arbitrary Lie group and $P$ be an invariant polynomial on $\mathfrak{g}$ of degree two. 
\begin{enumerate}
\item 
There exist Chern-Simons theories with polynomial $P$ if and only if the pair $(H,\rho) \in M_{\R}^3(G)$ defined by $P$ lies in the integral lattice $M_{\Z}^3(G)$.

\item
If so, inequivalent Chern-Simons theories with polynomial $P$ are   parameterized by $H^3(BG,\ueins)$. 
\end{enumerate}
\end{proposition}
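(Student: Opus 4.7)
The plan is to identify equivalence classes of Chern-Simons theories with fixed polynomial $P$ with the fibre over the pair $(H,\rho)\in \Omega^3(G) \oplus \Omega^2(G^2)$ determined by $P$ via \erf{54}, taken under the projection $\Omega$ from \erf{32}, and then to read off both claims from the short exact sequence of Proposition \ref{prop3} applied with $n=2$.

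First I would check that the pair $(H,\rho)$ defined by $P$ in \erf{54} automatically lies in $M_{\R}^3(G)$, i.e.\ satisfies $\mathrm{d}H = 0$, $\Delta H - \mathrm{d}\rho = 0$, and $\Delta\rho = 0$. These are Lie-theoretic identities that follow routinely from the Maurer-Cartan structure equation and the invariance of $P$; the case of the canonical bilinear form treated in Example \ref{ex4}, and in particular the identity \erf{4}, already contains the essential computation. Next, by unwinding the definitions, equivalence classes of Chern-Simons theories with polynomial $P$ are in bijection with isomorphism classes of multiplicative bundle gerbes with connection $(\mathcal{G},\mathcal{M},\alpha)$ over $G$ whose curvature pair equals $(H,\rho)$, and via the bijection of Proposition \ref{prop8} these correspond to those classes in $H^3(BG,\mathcal{D}^{bi}(2))$ whose image under $\Omega$ equals $(H,\rho)$.

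To conclude I would invoke the exact sequence of Proposition \ref{prop3} with $n=2$,
\begin{equation*}
\alxydim{@C=0.7cm}{0 \ar[r] & H^3(BG,\ueins) \ar[r]^-{\iota^{*}} & H^3(BG,\mathcal{D}^{bi}(2)) \ar[r]^-{\Omega} & M_\Z^3(G) \ar[r] & 0.}
\end{equation*}
Surjectivity of $\Omega$ onto $M_\Z^3(G)$ yields claim (1): the fibre over $(H,\rho)$ is non-empty if and only if the integrality condition $(H,\rho) \in M_\Z^3(G)$ is satisfied. Exactness at the middle term exhibits this fibre, when non-empty, as a torsor under the kernel $H^3(BG,\ueins)$, which after a non-canonical choice of base point gives claim (2). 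The main conceptual (rather than computational) obstacle is the identification carried out in the previous paragraph: one must verify that the notion of equivalence of Chern-Simons theories, formulated through multiplicative 1-isomorphisms of Definition \ref{def2}, matches exactly the cohomological equivalence of cocycles encoded by $H^3(BG,\mathcal{D}^{bi}(2))$, but this is precisely the content of the discussion preceding Proposition \ref{prop8}.
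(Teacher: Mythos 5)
Your proposal is correct and follows essentially the same route as the paper, which simply declares Proposition \ref{prop10} to be a consequence of Proposition \ref{prop3}: for fixed $P$ the equivalence classes of Chern-Simons theories are the fibre of $\Omega$ over $(H,\rho)$ in $H^3(BG,\mathcal{D}^{bi}(2))$, which by the exact sequence is non-empty precisely when $(H,\rho)\in M_\Z^3(G)$ and is then a torsor under $H^3(BG,\ueins)$. Your explicit verification that $(H,\rho)\in M_{\R}^3(G)$ and that multiplicative 1-isomorphisms preserve the curvature pair fills in details the paper leaves implicit.
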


Additionally, every Chern-Simons theory defines  a class in $H^4(BG,\Z)$, namely the multiplicative class of its multiplicative bundle gerbe with connection. For $G$ compact, the action of  $\Omega^2(G)$ on multiplicative bundle gerbes with connection from Proposition \ref{prop1} preserves this class, and if we restrict this action to $\Omega^2_{\mathrm{d},\Delta}(G)$, it also preserves  the curvature forms $\eta$ and $\rho$. Hence, Chern-Simons theories with fixed class in $H^4(BG,\Z)$ are parameterized by 
$\Omega^2_{\mathrm{d},\Delta}(G)/\Omega^2_{\mathrm{d},\Delta,\Z}(G)$, and one can  check that there is a bijection
 \begin{equation*}
\frac{\Omega^2_{\mathrm{d},\Delta}(G)}{\Omega^2_{\mathrm{d},\Delta,\Z}(G)} \cong \frac{H^3(BG,\R)}{H^3(BG,\Z)}\text{.}
\end{equation*}
Since $G$ is compact we have $H^3(BG,\R)=0$ so that  there is no ambiguity; this reproduces a central result of Dijkgraaf and Witten \cite{dijkgraaf1}. 

If $G$ is additionally simple one can also introduce  a \emph{level}: this is the ratio between the pullback of $H$ to the simply-connected cover $\tilde G$, and the generator of $H^3(\tilde G,\Z)=\Z$. For example, we have already found  (see Example \ref{ex3}) that for $G=\so{3}$ the level of a Chern-Simons theory is divisible by four, which is in agreement with the results of \cite{felder2}.

\subsection{Symmetric D-Branes and Bi-Branes}

Every bundle gerbe with connection has a notion of holonomy around closed, oriented surfaces. This notion can be extended to surfaces with boundary by requiring  additional structure, called D-branes. A \emph{D-brane} is a pair $(Q,\mathcal{E})$ of a submanifold $Q$  of $M$ and  a $\mathcal{G}|_Q$-module:  a (not necessarily invertible) 1-morphism $\mathcal{E}: \mathcal{G}|_Q \to \mathcal{I}_{\omega}$, see \cite{carey2, gawedzki4,waldorf1}. 

In a similar way, surface holonomy can be extended to surfaces with \emph{defect lines}: these are embedded oriented circles that divide the surface into several components. To each of these components  a manifold $M_i$ with a bundle gerbe $\mathcal{G}_i$ with connection is assigned.
In this situation, the additional structure is a collection of bi-branes, one for each defect line \cite{fuchs4}. If a defect line separates components assigned to $M_1$ and $M_2$, a \emph{bi-brane} is a submanifold $\tilde Q \subset M_1 \times M_2$ together with a  $p_1^{*}\mathcal{G}_1|_{\tilde Q}$-$p_2^{*}\mathcal{G}_2|_{\tilde Q}$-bimodule, where $p_i$  are the projections $M_1 \times M_2 \to M_i$. As mentioned in Section \ref{sec2}, the bimodule is a 1-morphism
\begin{equation*}
\mathcal{D}: p_1^{*}\mathcal{G}_1|_{\tilde Q} \to p_2^{*}\mathcal{G}_2|_{\tilde Q} \otimes \mathcal{I}_{\tilde\omega}\text{,} 
\end{equation*}
and the 2-form $\tilde \omega\in\Omega^2(\tilde Q)$ is called its curvature.

The goal of this section is the construction of bi-branes in $G \times G$ from given D-branes in $G$.  We consider the twisted multiplication $\tilde m(g,h):=gh^{-1}$ and the  map
\begin{equation*}
\mu: G \times G \to G \times G : (g,h) \mapsto (\tilde m(g,h),h)\text{,}
\end{equation*}
which satisfy the relations $m \circ \mu = p_1$, $p_1 \circ \mu = \tilde m$ and $p_2 \circ \mu = p_2$. 

\begin{definition}
\label{def4}
Let $(\mathcal{G},\mathcal{M},\alpha)$ be a multiplicative bundle gerbe with connection over $G$, and let $(Q,\mathcal{E})$ be a D-brane consisting of a submanifold $Q\subset M$ and a module $\mathcal{E}: \mathcal{G}|_Q \to \mathcal{I}_{\omega}$.
We define a bi-brane $\mathcal{D}_{\mathcal{M}}(Q,\mathcal{E})$ with the submanifold \begin{equation*}
\tilde Q := \tilde m^{-1}(Q) \subset G \times G
\end{equation*}
and the bimodule 
\begin{equation*}
\alxydim{@C=1.8cm}{p_1^{*}\mathcal{G}|_{\tilde Q} \ar[r]^-{\mu^{*}\mathcal{M}^{-1}} & \tilde m^{*} (\mathcal{G}|_{Q}) \otimes p_2^{*}\mathcal{G}|_{\tilde Q} \otimes \mathcal{I}_{-\mu^{*}\rho|_{\tilde Q}} \ar[r]^-{\tilde m^{*}\mathcal{E} \otimes \id \otimes \id} & p_2^{*}\mathcal{G}|_{\tilde Q} \otimes \mathcal{I}_{\tilde\omega}}
\end{equation*}
of curvature $\tilde\omega :=- \mu^{*}\rho|_{\tilde Q} + \tilde m^{*}\omega$.
\end{definition}

In their applications to Wess-Zumino-Witten models, D-branes and bi-branes are required to satisfy certain symmetry conditions. For a \emph{symmetric} D-brane $(Q,\mathcal{E})$ the submanifold $Q$ is  a  conjugacy class $Q=\mathcal{C}_h$ of $G$, and   the gerbe module $\mathcal{E}$ has the particular curvature
\begin{equation}
\label{1}
\omega_h :=  \left \langle \theta|_{\mathcal{C}_h} \wedge \frac{\mathrm{Ad}^{-1}+1}{\mathrm{Ad}^{-1}-1} \;\theta|_{\mathcal{C}_h}  \right \rangle \in \Omega^2(\mathcal{C}_h)\text{.}
\end{equation}
For compact simple Lie groups $G$, all (irreducible)  symmetric D-branes are known: the conjugacy classes are \quot{quantized} to those who correspond to integrable highest weights,  and the gerbe modules $\mathcal{E}$ with curvatures $\omega_h$ have been constructed explicitly  \cite{gawedzki4}. Our aim is to use these available D-branes to construct symmetric bi-branes via Definition \ref{def4}.

In \cite{fuchs4} we have found  conditions for \emph{symmetric bi-branes} in $G \times G$. From  observations of scattering bulk  fields we have deduced the condition that the submanifold $\tilde Q \subset G \times G$ has to be a \emph{biconjugacy class}. These are  the submanifolds
\begin{equation*}
\mathcal{B}_{h_1,h_2}:= \lbrace (g_1,g_2) \in G \times G \;|\; g_1=x_1h_1x_2^{-1}\text{ and }g_2=x_1h_2x_2^{-1}\;\text{;}\;x_1,x_2\in G \rbrace
\end{equation*}
of $G \times G$. Biconjugacy classes are related to conjugacy classes by \begin{equation}
\label{35}
\mathcal{B}_{h_1,h_2} = \tilde m^{-1}(\mathcal{C}_{h_1h_2^{-1}})
\end{equation}
for $\tilde m$ the twisted multiplication used above. 
Another condition we have found in \cite{fuchs4} is that the curvature of a symmetric bi-brane with submanifold $\mathcal{B}_{h_1,h_2}$ has to be the  2-form
\begin{equation*}
\tilde\omega_{h_1,h_2} := \tilde m^{*}\omega_{h_1h_2^{-1}}-\frac{1}{2}\left \langle p_1^{*}\theta \wedge p_2^{*}\theta  \right \rangle \in \Omega^2(\mathcal{B}_{h_1,h_2})
\end{equation*}
with $\omega_{h_1h_2^{-1}}$ the 2-form on $\mathcal{C}_{h_1h_2^{-1}}$ from (\ref{1}).

\begin{proposition}
\label{prop5}
Let $(\mathcal{G},\mathcal{M},\alpha)$ be a multiplicative bundle gerbe with connection over $G$ with curvature $\eta$ (\ref{21}) and 2-form $\rho$ (\ref{5}), and let $(\mathcal{C}_h,\mathcal{E})$ be a symmetric D-brane. Then, the bi-brane $\mathcal{D}_{\mathcal{M}}(\mathcal{C}_h,\mathcal{E})$ is symmetric.
\end{proposition}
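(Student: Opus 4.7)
The proof will be entirely a matter of checking the two defining properties of a symmetric bi-brane against the construction in Definition \ref{def4}: first, that the submanifold $\tilde Q = \tilde m^{-1}(\mathcal{C}_h)$ is a biconjugacy class, and second, that the curvature $\tilde\omega = -\mu^{*}\rho|_{\tilde Q}+\tilde m^{*}\omega_h$ of the bimodule equals $\tilde\omega_{h_1,h_2}$ for some $h_1,h_2$ with $h_1h_2^{-1} = h$. The first is immediate from the equality \erf{35} already recorded in the text: we may take any factorization $h=h_1h_2^{-1}$, and then $\tilde Q = \tilde m^{-1}(\mathcal{C}_h) = \mathcal{B}_{h_1,h_2}$.

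For the second, the main task is to compute the pullback $\mu^{*}\rho$ on $G\times G$ of the 2-form $\rho=\frac{1}{2}\langle p_1^{*}\theta\wedge p_2^{*}\bar\theta\rangle$ from \erf{5}. Using the factorization $\tilde m = m\circ (\id\times i)$ with $i$ the inversion and the standard identities $i^{*}\theta = -\bar\theta$ and $m^{*}\theta = \mathrm{Ad}(p_2^{-1})p_1^{*}\theta+p_2^{*}\theta$, one obtains
\begin{equation*}
\tilde m^{*}\theta \;=\; \mathrm{Ad}(p_2)\,p_1^{*}\theta - p_2^{*}\bar\theta\text{.}
\end{equation*}
Since $p_1\circ\mu = \tilde m$ and $p_2\circ\mu = p_2$, and since $\langle p_2^{*}\bar\theta\wedge p_2^{*}\bar\theta\rangle$ vanishes (symmetry of $\langle -,-\rangle$ combined with antisymmetry of $\wedge$), this yields
\begin{equation*}
\mu^{*}\rho \;=\; \tfrac{1}{2}\langle\mathrm{Ad}(p_2)\,p_1^{*}\theta\wedge p_2^{*}\bar\theta\rangle\text{.}
\end{equation*}
Applying invariance of $\langle -,-\rangle$ together with the pointwise identity $\mathrm{Ad}(g^{-1})\bar\theta_g = \theta_g$ converts the right hand side to $\frac{1}{2}\langle p_1^{*}\theta\wedge p_2^{*}\theta\rangle$. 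Thus the first summand of $\tilde\omega$ produces exactly the summand $-\frac{1}{2}\langle p_1^{*}\theta\wedge p_2^{*}\theta\rangle$ of $\tilde\omega_{h_1,h_2}$.

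Putting the two pieces together on $\tilde Q = \mathcal{B}_{h_1,h_2}$, and using $\omega_h = \omega_{h_1h_2^{-1}}$ since $h = h_1 h_2^{-1}$,
\begin{equation*}
\tilde\omega \;=\; \tilde m^{*}\omega_{h_1h_2^{-1}} \,-\, \tfrac{1}{2}\langle p_1^{*}\theta\wedge p_2^{*}\theta\rangle\big|_{\mathcal{B}_{h_1,h_2}} \;=\; \tilde\omega_{h_1,h_2}\text{,}
\end{equation*}
which is precisely the required curvature. The only substantive step is the Maurer-Cartan calculation sketched above; everything else is bookkeeping that follows from the definitions and from the relation \erf{35}. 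No coherence or higher-categorical obstruction arises, because the bimodule underlying $\mathcal{D}_{\mathcal{M}}(\mathcal{C}_h,\mathcal{E})$ is built out of the pullback of $\mathcal{M}$ and $\mathcal{E}$, so its curvature is determined by the curvatures of these ingredients.
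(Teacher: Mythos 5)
Your proposal is correct and follows the same route as the paper: identify the submanifold as a biconjugacy class via \erf{35}, then verify the curvature identity $\tilde\omega=\tilde\omega_{h_1,h_2}$ by direct computation. The paper merely asserts that this coincidence ``can be checked explicitly''; your Maurer--Cartan calculation of $\mu^{*}\rho=\tfrac{1}{2}\langle p_1^{*}\theta\wedge p_2^{*}\theta\rangle$ (using $\tilde m = m\circ(\id\times i)$, the vanishing of $\langle\bar\theta\wedge\bar\theta\rangle$, and $\mathrm{Ad}$-invariance) is exactly that check, carried out correctly.
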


\begin{proof}
By (\ref{35}), the submanifold of $\mathcal{D}_{\mathcal{M}}(\mathcal{C}_h,\mathcal{E})$ is a biconjugacy class. Its curvature $\tilde\omega$ is according to Definition \ref{def4} given by $\tilde\omega =- \mu^{*}\rho+ \tilde m^{*}\omega_h$. For $h_1,h_2\in G$ such that $h_1h_2^{-1}=h$, the coincidence $\tilde\omega = \tilde\omega_{h_1,h_2}$ can be checked explicitly. 
\end{proof}

In conclusion, we have constructed first examples of symmetric bi-branes in  Wess-Zumino-Witten models, whose target space is 
 a compact and simple  Lie group $G$.

\begin{remark}
Topological defects in a conformal field theory  have a natural fusion product. It is to expect that symmetric bi-branes also come with a notion of fusion -- some aspects have been developed in \cite{fuchs4}. The ring of topological defects is in turn closely related to the Verlinde ring, and there are concrete manifestations of this relation in terms of bi-branes \cite{fuchs4}. 
Proposition \ref{prop5} allows now to apply the fusion of symmetric bi-branes to symmetric D-branes, whenever the underlying bundle gerbe is multiplicative. 
The observation that symmetric D-branes only have a ring structure in this multiplicative situation is indeed well-known, both in a setup with bundle gerbes   \cite{carey6} as well as in twisted K-theory  \cite{freed1}.
\end{remark}

\section*{Figures}
\addcontentsline{toc}{section}{Figures}
{\tiny
\begin{equation*}
\xymatrix@C=-1.1cm@R=2.7cm{&& \mathcal{M}_{123,4} \circ (\mathcal{M}_{12,3} \otimes \id_{\mathcal{G}_4}) \circ (\mathcal{M}_{1,2} \otimes \id_{\mathcal{G}_3} \otimes \id_{\mathcal{G}_4}) \ar@{=>}[drr]^{\alpha_{1,2,34} \circ \id} \ar@{=>}[dll]_{\id \circ (\alpha_{1,2,3} \otimes \id)} && \\ \mathcal{M}_{123,4} \circ (\mathcal{M}_{1,23} \otimes \id_{\mathcal{G}_4}) \circ (\id_{\mathcal{G}_1} \otimes \mathcal{M}_{2,3} \otimes \id_{\mathcal{G}_4}) \ar@{=>}[dr]_{\alpha_{1,23,4} \circ \id} &&&& \mathcal{M}_{12,34} \circ (\mathcal{M}_{12} \otimes \mathcal{M}_{23}) \ar@{=>}[dl]^{\alpha_{12,3,4} \circ \id} \\ \mathcal{M}_{1,234} \circ (\id_{\mathcal{G}_1} \otimes \mathcal{M}_{23,4}) \circ (\id_{\mathcal{G}_1} \otimes \mathcal{M}_{2,3} \otimes \id_{\mathcal{G}_4}) \ar@{=>}[rrrr]_{\id \circ (\id\otimes\alpha_{1,2,3})}&&&& \mathcal{M}_{1,234} \circ (\id_{\mathcal{G}_1} \otimes \mathcal{M}_{2,34}) \circ \id_{\mathcal{G}_1} \otimes \id_{\mathcal{G}_2 \otimes \mathcal{M}_{3,4}}} \end{equation*}
\begin{figure}[h]
\begin{center}
\caption{The Pentagon axiom for the bimodule morphism of a multiplicative bundle gerbe with connection (Definition \ref{def1}).}
\label{fig1}
\end{center}
\end{figure}
}
{
\tiny
\begin{equation*}
\xymatrix@R=1.2cm@C=2cm{\mathcal{A}_{123} \circ \mathcal{M}_{12,3} \circ (\mathcal{M}_{1,2} \otimes \id_{\mathcal{G}_3}) \ar@{=>}[r]^-{\id \circ \alpha} \ar@{=>}[d]_{\beta_{12,3} \circ \id} & \mathcal{A}_{123} \circ \mathcal{M}_{1,23} \circ (\id_{\mathcal{G}_1} \otimes \mathcal{M}_{2,3}) \ar@{=>}[d]^{\beta_{1,23} \circ \id} \\ \mathcal{M}'_{12,3} \circ (\mathcal{A}_{12} \otimes \mathcal{A}_3) \circ (\mathcal{M}_{1,2} \otimes \id_{\mathcal{G}_3})  \ar@{=>}[d]_{(\beta_{1,2} \otimes \id) \circ \id} & \mathcal{M}'_{1,23} \circ (\mathcal{A}_1 \otimes \mathcal{A}_{23}) \circ (\id_{\mathcal{G}_1} \otimes \mathcal{M}_{2,3}) \ar@{=>}[d]^{\id \circ \beta_{2,3}} \\ \mathcal{M}'_{12,3} \circ (\mathcal{M}_{1,2}' \otimes \id) \circ (\mathcal{A}_1 \otimes \mathcal{A}_2 \otimes \mathcal{A}_3) \ar@{=>}[r]_-{\alpha' \circ \id} & \mathcal{M}'_{1,23} \circ (\id_{\mathcal{G}_1'} \otimes \mathcal{M}'_{2,3}) \circ (\mathcal{A}_1 \otimes \mathcal{A}_2 \otimes \mathcal{A}_3)}
\end{equation*}
\begin{figure}[h]
\begin{center}
\caption{The compatibility axiom between the bimodule morphisms $\alpha$ and $\alpha'$ and the 2-isomorphism $\beta$ of a multiplicative 1-morphism (Definition \ref{def2}). }
\label{fig2}
\end{center}
\end{figure}
}

\kobib{../../tex}

\end{document}